\newcommand{\Z}{\mathbb Z}
\newcommand{\R}{\mathbb R}
\newcommand{\C}{\mathbb C}
\newcommand{\om}{\omega}
\newcommand{\al}{\alpha}
\newtheorem*{theo}{Theorem}
\newtheorem*{nota}{Notation}
\newtheorem{theorem}{Theorem}[section]
\newtheorem{lemma}[theorem]{Lemma}
\newtheorem{cor}[theorem]{Corollary}
\newtheorem{prop}[theorem]{Proposition}
\newtheorem{remark}[theorem]{Remark}
\newtheorem{qu}[theorem]{Question}
\theoremstyle{definition}
\let\phi=\varphi
\newcommand{\rem}[1]{}
\DeclareFontFamily{U}{mathb}{\hyphenchar\font45}
\DeclareFontShape{U}{mathb}{m}{n}{
<-6> mathb5 <6-7> mathb6 <7-8> mathb7
<8-9> mathb8 <9-10> mathb9
<10-12> mathb10 <12-> mathb12
}{}
\DeclareSymbolFont{mathb}{U}{mathb}{m}{n}
\DeclareMathSymbol{\llcurly}{\mathrel}{mathb}{"CE}
\DeclareMathSymbol{\ggcurly}{\mathrel}{mathb}{"CF}
\newcommand{\id}{\mathrm{id}}
\title[Non-orderability and the contact Hofer norm]{Non-orderability and the contact Hofer norm}
\author{Jakob Hedicke}
\address{ Centre de recherches mathématiques (CRM), Université de Montréal, 2920 Chemin de la tour, Montréal (Québec), H3T 1J4, Canada} 
\email{jakob.hedicke@gmail.com}
\author{Egor Shelukhin}
\address{D\'epartement de Math\'ematiques et de Statistique, Universit\'e de
Montr\'eal, C.P. 6128 Succ. Centre-Ville, Montreal (Québec), H3C 3J7, Canada} 
\email{egor.shelukhin@umontreal.ca}
\date{\today}
\begin{document}

\begin{abstract}
We relate non-orderability in contact topology to shortening in the contact Hofer norm. Combined with considerations of open books, this provides many new examples of non-orderable contact manifolds, including contact boundaries of subcritical Weinstein domains, and in particular the long-standing case of the standard $S^1 \times S^2.$ We also produce new examples of contact manifolds admitting contactomorphisms without translated points, provide obstructions to subcritical polarizations of symplectic manifolds, and establish a $\mathcal{C}^0$-continuity property of the contact Hofer metric.
\end{abstract} 

\maketitle

\tableofcontents

\section{Introduction}

Let $(M,\xi)$ be a cooriented contact manifold.
Throughout this paper, we denote by $\mathcal{G}=\mathcal{G}(M,\xi)=\mathrm{Cont}_0(M,\xi)$ the identity component of the group of compactly supported contactomorphisms, that is, the group of diffeomorphisms preserving $\xi$ that are connected to the identity by a smooth compactly supported isotopy of such diffeomorphisms. We denote by $\tilde{\mathcal{G}}=\Tilde{\mathcal{G}}(M,\xi)$ the universal cover of $\mathcal{G}$.

In \cite{Eliashberg00} Eliashberg and Polterovich have introduced a natural bi-invariant relation $\preccurlyeq$ on the group $\Tilde{\mathcal{G}}$ by looking at non-negative paths of contactomorphisms, i.e., paths that are positively transverse to the cooriented contact structure.
In case $M$ is closed, they showed that the relation is a partial order, or in other words that $M$ is {\bf orderable}, if and only if there does not exist a contractible positive loop of contactomorphisms and gave various examples of contact manifolds without contractible positive loops. This has since been established in many further cases (see e.g. \cite{Albers15, Albers23, Bae24, Cant23, Colin19, Chernov102, Eliashberg06, Granja21}). However, so far, all known examples of non-orderable contact manifolds arise as the boundaries of $2$-stabilizations of Liouville manifolds, see \cite{Eliashberg06}.

In an a priori different direction, by analogy to the Hofer norm in symplectic geometry, Shelukhin introduced the contact Hofer norm $|\cdot|_{\alpha}$ on $\mathcal{G}$ and a pseudo-norm on $\tilde{\mathcal{G}}$, that depends on the choice of a contact form $\alpha$, see \cite{Egor}. A first connection between orderability and the Hofer norm was found in \cite{Hedicke24}, where it was shown that if there are no positive loops of contactomorphisms, then the diameter of $\mathcal{G}$ with respect to $|\cdot|_{\alpha}$ is infinite. The starting point of this paper is a generalization of this result to $\Tilde{\mathcal{G}}(M)$. It allows us to replace the question of existence of positive loops by the question of estimates on the contact Hofer norm.

We proceed by giving several examples of contact manifolds for which the contact Hofer diameter is bounded on $\mathcal{G}$ and $\Tilde{\mathcal{G}}$. This gives a conceptual criterion for the existence of (contractible) positive loops. In particular we prove the following by means of contact open books and Legendrian flexibility. We refer to Section \ref{sec: sketch} for an outline of our arguments.

\begin{theorem}\label{thm1}
Let $(M^{2n+1},\xi)$ be a closed contact manifold, $W$ a page of a Weinstein open book supporting $\xi$ and $L$ the skeleton of $W$ with respect to some ideal Giroux form.
Then the following holds.
\begin{enumerate}[label=(\roman*)]
    \item\label{thm1:subcrit} If $W$ is a subcritical Weinstein manifold, the contact Hofer norms are bounded on $\mathcal{G}$ and $\Tilde{\mathcal{G}}$.
    \item\label{thm1:loose} If $n\geq 2$ and $L$ is a (smooth and closed) loose Legendrian in $M$, then for any contact form $\alpha$ the restriction of the contact Hofer norm to the group of strict contactomorphisms $\mathcal{G}_{\alpha}$ and its pre-image in the universal cover $\Tilde{\mathcal{G}}_{\alpha}$ is bounded \footnote{Here $\Tilde{\mathcal{G}}_{\alpha}\subset \Tilde{\mathcal{G}}$ denotes the set of all classes in $\Tilde{\mathcal{G}}$ that can be represented by a path $\phi_t$ with $\phi_1\in\mathcal{G}_{\alpha}$.}.
\end{enumerate}

In both cases $(M,\xi)$ is non-orderable.
\end{theorem}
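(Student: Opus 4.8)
The plan is to establish the two boundedness assertions directly and then read off non-orderability from them. For the last step I will use the generalization to $\Tilde{\mathcal{G}}$ of the result of \cite{Hedicke24} described in the introduction: if $(M,\xi)$ is orderable, then the contact Hofer pseudo-norm has infinite diameter on $\Tilde{\mathcal{G}}$. Taking the contrapositive, boundedness of the relevant Hofer norm implies that $(M,\xi)$ is non-orderable; equivalently, by the criterion of Eliashberg--Polterovich \cite{Eliashberg00}, it carries a contractible positive loop of contactomorphisms. Thus everything reduces to the two boundedness statements, and the governing principle for both is the same: a \emph{fast} contractible positive loop forces the Hofer norm to be bounded. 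Concretely, I will isolate the auxiliary statement that if $(M,\xi)$ admits a contractible positive loop $\{\ell_t\}$ generated, with respect to the relevant contact form, by a contact Hamiltonian $h_t$ with $\min_M h_t \ge c > 0$, then the contact Hofer (pseudo-)norm is bounded on the corresponding group. The remaining work is to produce such loops from the open book together with the flexibility hypotheses.

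For part \ref{thm1:subcrit} the loop comes from the product structure of a subcritical page. By Cieliebak's splitting theorem a subcritical Weinstein manifold is Weinstein-deformation equivalent to $W_0 \times \C$, and the standard rotation $\rho_\theta$ of the $\C$-factor is a Hamiltonian circle action whose generating Hamiltonian is (a cutoff of) $\tfrac12|z|^2 \ge 0$ and which satisfies $\rho_{2\pi}=\id$. I will assemble these fibrewise rotations over the pages of a Weinstein open book supporting $\xi$: matched with the binding in the standard way (this is the circle of ideas behind the flexible positive loops of \cite{Eliashberg06}), they assemble to a genuine loop $\{\ell_\theta\}_{\theta\in[0,2\pi]}$ of contactomorphisms of the closed manifold $M$. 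With respect to an ideal Giroux form one checks that the rotation moves positively transversally to $\xi$, so the loop is positive; it is contractible because shrinking the rotation radius to zero contracts it to the constant loop. This loop is defined on $\mathcal{G}$ and lifts to $\Tilde{\mathcal{G}}$, so the auxiliary principle yields boundedness on both, as required.

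For part \ref{thm1:loose} the loop is produced by Legendrian flexibility. Here the page retracts onto its skeleton $L$, which is assumed to be a closed loose Legendrian, and $n\ge 2$ is precisely the hypothesis under which Murphy's $h$-principle for loose Legendrians applies. Using the $h$-principle I will construct a contact isotopy, supported in a neighborhood of $L$ and then spun around the $S^1$-direction of the open book, realizing a positive loop; looseness is exactly what guarantees that the required isotopy exists and that it can be taken positive. Because this construction manipulates a fixed contact form, it stays inside the group $\mathcal{G}_{\alpha}$ of strict contactomorphisms for the given $\alpha$ and lifts to $\Tilde{\mathcal{G}}_{\alpha}$, which is why the conclusion in this case is restricted to the strict subgroup but holds for every contact form $\alpha$.

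The main obstacle, and the technical heart of the argument, is the auxiliary boundedness principle, i.e. passing from the existence of a fast contractible positive loop to a uniform bound on the Hofer norm. The point is that when $(M,\xi)$ is orderable the Hofer norm is unbounded (this is the content of the result generalizing \cite{Hedicke24}), so positivity of the loop must be used in an essential way. The mechanism I would use is a reparametrization argument that ``races'' the fast loop $\{\ell_t\}$ against the path of an arbitrary element $\Tilde\phi$: given a representative $\{\phi_t\}$ with Hamiltonian $H_t$, I pass to a power of the loop whose generating Hamiltonian dominates $H_t$ pointwise, and interleave the two paths so that the loop absorbs the large part of $H_t$ while contributing nothing to the endpoint in $\Tilde{\mathcal{G}}$, since it is a contractible loop. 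Carrying this out produces a new representative of $\Tilde\phi$ whose Hofer length is controlled by $\max_M h_t$ and $\mathrm{osc}_M h_t$ alone, independently of $\phi$. Making this interleaving precise, and verifying that the composition of contact flows behaves as claimed under the conformal factors, is the delicate step; once it is in place, both boundedness assertions and the non-orderability of $(M,\xi)$ follow as above.
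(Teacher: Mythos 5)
Your architecture is inverted relative to what can actually be proved, and the inversion hides two genuine gaps. First, your ``auxiliary boundedness principle'' --- that a contractible positive loop with $\min_M h_t \ge c > 0$ forces the contact Hofer norm to be bounded on $\Tilde{\mathcal{G}}$ --- is not a lemma but an open problem: since any positive loop has $\min h_t \ge c>0$ by compactness, your principle is exactly the converse of Theorem \ref{thm_hofer_orderable}, and the paper explicitly poses it as an open question (``Are there non-orderable contact manifolds that have an unbounded contact Hofer norm on $\tilde{\mathcal{G}}$?''). Your interleaving/racing sketch fails at precisely the point you flag as delicate: the contact Hofer norm is not conjugation-invariant, and composing a representative of $\tilde\phi$ with iterates of the loop transforms the Hamiltonians by uncontrolled conformal factors, so the loop does not ``absorb'' $H_t$ --- it contributes its own length to every representative. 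The paper's actual mechanism runs in the opposite direction: boundedness is proved \emph{directly}, with no loop as input, via Proposition \ref{prophandle} (conjugating isotopies supported in a skeleton-complement by the contracting flow of Lemma \ref{lem_ob_contraction}, where the conformal factor decays like $e^{-s}$) combined with the fragmentation Lemma \ref{lemfragmentation}; the contractible positive loop is then the \emph{output}, extracted from shortening of the Reeb flow via Lemma \ref{lem_minimum}.

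Second, the loops you propose to feed into that principle do not exist by any known construction --- indeed, producing them is the main new content of the theorem. For (i), rotating the $\C$-factor of a subcritical page $W_0\times\C$ gives a loop of symplectomorphisms of the page, but there is no standard way to ``match it with the binding'' into a \emph{positive} loop on $M$: the generating Hamiltonian $\tfrac12|z|^2$ vanishes along $\{z=0\}$, the contractibility-by-shrinking-the-radius argument is not meaningful for a full rotation, and the Eliashberg--Kim--Polterovich construction you invoke requires a $2$-stabilization $W\times\C^2$. If your rotation loop worked, the non-orderability of the standard $S^1\times S^2$ (boundary of $T^*S^1\times\C$) would not have been a long-standing open case, which this paper resolves. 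Instead, the paper handles (i) by a dimension count: the subcritical skeletons $L_\pm$ have dimension $<n$, so a generic $\mathcal{C}^0$-small perturbation $f'_t$ of any isotopy satisfies $f'_t(L_-)\cap L_+=\emptyset$, and fragmentation bounds $|\tilde f_1|_\alpha$. For (ii), your ``spun'' h-principle isotopy is not an argument, and you misidentify the role of strictness: in the paper, $\phi_1\in\mathcal{G}_\alpha$ is needed so that $\phi_1(U)$ is a loose chart carrying the \emph{same} Hofer bound as $U$ (because $\phi_1^*\alpha=\alpha$), which is the hypothesis of Nakamura's shortening theorem (Theorem \ref{thm:naka}, \cite{Nakamura21}); one then gets $\psi_t$ of uniformly bounded length with $\psi_t(L_-)$ $\mathcal{C}^0$-close to $\phi_t(L_-)$, so $\psi_t^{-1}\phi_t(L_-)$ is disjoint from a second skeleton $L_+$ and fragmentation again closes the estimate. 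As written, your proof assumes the loops whose existence is the conclusion, and rests the boundedness step on an unproved converse implication.
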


\begin{remark}
  Note that Theorem \ref{thm1} does not cover any overtwisted manifolds.
  As pointed out in \cite{Borman15}, any loose Legendrian in an overtwisted contact manifold has an overtwisted complement.
  On the other hand the complement of the skeleton of a page contracts to an arbitrarily small neighbourhood of any other skeleton, see Lemma \ref{lem_ob_contraction}, and therefore cannot be overtwisted. In particular, for an overtwisted contact manifold, there does not exist a supporting open book with flexible pages. This observation is related to \cite[Question 1.1]{BowdenCrowley}.
\end{remark}

Theorem \ref{thm1} in particular implies the following, partially answering a question raised in \cite{Eliashberg06}.

\begin{cor}\label{cor1stab}
Let $W$ be a Weinstein manifold with a smooth closed skeleton.
Then the ideal boundary of $W\times \mathbb{C}$ is non-orderable.
\end{cor}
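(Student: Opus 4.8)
The plan is to exhibit the ideal boundary $M:=\partial_\infty(W\times\C)$ as the total space of a Weinstein open book whose page is $W$ itself, and then feed this open book directly into Theorem~\ref{thm1}. Writing $\C$ in polar coordinates with its standard Liouville form, the angular coordinate defines, on the complement of the locus $\{z=0\}$, a fibration $M\setminus B\to S^1$ whose pages are Weinstein-deformation-equivalent to $W$ and whose binding $B$ is the ideal boundary of $W$; because the $\C$-factor contributes no critical points, the monodromy is the identity. This is the standard presentation of $\partial_\infty(V\times\C)$ as the trivial open book with page $V$, applied to $V=W$, and it supports the contact structure $\xi$ on $M$. In particular the skeleton of the page is exactly $L=\mathrm{Skel}(W)$, which is smooth and closed by hypothesis, with $\dim L\le n$ where $\dim W=2n$.

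The first observation is that $W\times\C$ is always subcritical: a Weinstein handle decomposition of $W$ with handles of index $\le n$ induces one of $W\times\C$ of dimension $2n+2$ with the same indices, all now strictly below the middle dimension $n+1$. I would then split according to $\dim L$. If $\dim L<n$, then $W$ is itself subcritical, and Theorem~\ref{thm1}\ref{thm1:subcrit} applies verbatim to the page $W$, giving boundedness of the contact Hofer norms on $\mathcal{G}$ and $\Tilde{\mathcal{G}}$ and hence non-orderability. If instead $\dim L=n$, so that $L$ is a genuine Legendrian in $M^{2n+1}$, the plan is to invoke Theorem~\ref{thm1}\ref{thm1:loose}, for which it remains only to verify that $L$ is loose.

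The main obstacle is precisely this looseness. Here I would exploit that $L$ is the skeleton of a page of the subcritical filling $W\times\C$: the extra $\C$-direction created by the stabilization furnishes exactly the room needed to build a loose chart around $L$, forcing $L$ to be loose in $M$ as soon as $\dim L=n\ge 2$. This is morally the same flexibility phenomenon underlying the Remark following Theorem~\ref{thm1} and the contraction statement of Lemma~\ref{lem_ob_contraction}, namely that the complement of the page skeleton can be pushed into an arbitrarily small neighbourhood; making this precise at the level of producing the zig-zag of a loose chart is the delicate step. With looseness established, Theorem~\ref{thm1}\ref{thm1:loose} yields boundedness and non-orderability. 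The cases escaping this dichotomy are the low-dimensional ones: $\dim L=0$ gives subcritical $W$ (for instance $W=\C$, $M=S^3$) and is covered by part~\ref{thm1:subcrit}, whereas the borderline $n=1$, $\dim L=1$—which includes $W=T^*S^1$ and the long-standing case $M=S^1\times S^2$—lies outside the range of both parts, and handling it is the sharpest point of the argument.
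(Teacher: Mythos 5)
Your overall architecture coincides with the paper's: present $\partial_\infty(W\times\C)$ as the trivial open book with page $W$, handle $\dim L<n$ by Theorem~\ref{thm1}\ref{thm1:subcrit}, and reduce $\dim L=n$ to looseness of $L$ plus Theorem~\ref{thm1}\ref{thm1:loose}. But in the critical case there is a genuine gap exactly where you flag ``the delicate step'': you never establish that $L$ is loose, and the heuristic you offer cannot be made to work as stated. The contraction phenomenon of Lemma~\ref{lem_ob_contraction} holds for \emph{every} supporting open book --- including open books on orderable manifolds such as the prequantizations of Corollary~\ref{cor:preq noloose}, whose page skeleta are therefore never loose --- so it cannot by itself certify looseness; and stabilization does not render arbitrary Legendrians loose (the standard Legendrian unknot in $S^{2n+1}=\partial_\infty\C^{n+1}$ is not loose, even though $\C^{n+1}$ is subcritical). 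The missing mechanism, which is the heart of the paper's proof, is: since $L$ is a smooth closed Lagrangian skeleton, $W$ is exact symplectomorphic to $T^{\ast}L$ by \cite[Theorem 1.1]{Starkston18} together with \cite[Corollary 11.21]{Cieliebak12}; the cotangent fibre over a point $p\in L$ is then a Lagrangian $\R^n$, Legendrian at infinity, intersecting $L$ transversely in a single point, and it is precisely this datum that \cite[Proposition 2.9]{Casals19} converts into looseness of the Legendrian lift of $L$ in the ideal boundary of $W\times\C$. Your intuition that the extra $\C$-direction is what creates the flexibility is consistent with how that criterion works, but without the $T^{\ast}L$ identification and the transverse Lagrangian disk your critical case is an unproven assertion, not a proof.

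A second, smaller discrepancy: you correctly identify the borderline case $n=1$, $\dim L=1$ (i.e.\ $W=T^{\ast}S^1$, $M=S^1\times S^2$) as escaping both parts of Theorem~\ref{thm1}, but you leave it open, whereas the paper closes it by a different route: per the remark following Corollary~\ref{cor1stab}, this case is covered by Corollary~\ref{corst}, where the image of the page skeleton under the Reeb flow is a pre-Lagrangian torus displaceable by \cite[Theorem 1.2]{Marincovic16}, so Theorem~\ref{thm2} applies. (Your observation that $W\times\C$ is subcritical is true but does no work in the argument.) To repair the proposal you therefore need two inputs: the $T^{\ast}L$ identification plus the Casals--Murphy looseness criterion for $n\geq 2$, and the Theorem~\ref{thm2} displacement argument for $n=1$.
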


\begin{remark} A few remarks are in order.
\begin{enumerate}[label=(\roman*)]
    \item \label{rem:cieliebak} Due to a result of Cieliebak \cite[Theorem 14.16]{Cieliebak12}, the Weinstein manifolds of the form $W\times \mathbb{C}$ are the subcritical Weinstein manifolds.
    \item If $W$ is critical, i.e., the skeleton is a closed Lagrangian $L$, $W$ is exact symplectomorphic to $T^{\ast}L$, see \cite[Theorem 1.1]{Starkston18} and \cite[Corollary 11.21]{Cieliebak12}.
    \item Corollary \ref{cor1stab} also holds when $\dim(W)=2$. In this case the only critical Weinstein manifold with smooth closed skeleton is $T^{\ast}S^1$. The ideal boundary of its $1$-stabilization is the standard tight $S^1\times S^2$, whose non-orderability is covered by Corollary \ref{corst} using a different approach: namely, the more ad-hoc Theorem \ref{thm2} below.
\end{enumerate}
\end{remark}

We can strengthen Corollary \ref{cor1stab} as an application of Theorem \ref{thm:perreeb} below to cover ideal contact boundaries of all subcritical Weinstein manifolds $W \times \C$ of dimension at least $6,$ answering a question raised in \cite{Eliashberg06}.

\begin{theorem}\label{thm1stab}
Let $W$ be a Weinstein manifold of finite type of dimension at least $4.$ Then the ideal contact boundary of $W\times \mathbb{C}$ is non-orderable.
\end{theorem}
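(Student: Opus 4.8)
The plan is to reduce non-orderability of the ideal contact boundary $(M,\xi)$ of $W\times\C$ to a uniform bound on the contact Hofer norm, and to produce that bound from the rotational symmetry of the stabilizing $\C$-factor. By the generalization to $\Tilde{\mathcal G}$ of the result of \cite{Hedicke24} which opens the paper, it is enough to show that the contact Hofer norm is bounded on $\Tilde{\mathcal G}(M,\xi)$: since $M$ is closed, boundedness of the diameter forces the existence of a contractible positive loop, and hence non-orderability in the sense of Eliashberg--Polterovich. As a first step I would fix a finite-type model realizing $W\times\C$ with the product Liouville structure $\lambda_W+\tfrac{1}{2}(x\,dy-y\,dx)$, recalling from Cieliebak \cite{Cieliebak12} that such products are exactly the subcritical Weinstein manifolds, so that $(M,\xi)$ is precisely the class of boundaries we must treat.

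Next I would present $(M,\xi)$ through its natural Weinstein open book: the angular coordinate $\theta=\arg(z)$ on the $\C$-factor exhibits $M$ with page $W$, binding $\partial_\infty W$, and trivial monodromy, the product being untwisted as $\theta$ traverses $[0,2\pi]$. Note that Theorem \ref{thm1} does not apply here, since the page $W$ is in general neither subcritical nor equipped with a smooth closed (loose) Legendrian skeleton; this is exactly the gap that the Reeb-dynamical input is designed to fill. Fixing an ideal Giroux form $\alpha$, the rotation of the $\C$-factor descends to a $2\pi$-periodic loop $\{\rho_t\}$ of strict contactomorphisms that rotates the pages, returns to the identity at time $2\pi$ by triviality of the monodromy, and is non-negative with respect to $\alpha$. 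This periodic non-negative loop, a degenerate analogue of a periodic Reeb flow, is the geometric input for Theorem \ref{thm:perreeb}; applying it yields the desired bound on the contact Hofer norm over $\mathcal G$ and over $\Tilde{\mathcal G}$, and hence the non-orderability of $(M,\xi)$. The hypothesis $\dim W\geq4$, equivalently $\dim M\geq5$, keeps us in the range where this mechanism and the surrounding flexibility are available.

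The main obstacle is the degeneracy of $\{\rho_t\}$ along the binding $\partial_\infty W$, which is the fixed-point locus of the $\C$-rotation: there the generating vector field vanishes, its contact Hamiltonian drops to zero, and $\{\rho_t\}$ is only non-negative rather than strictly positive, so it is not literally a Reeb flow. The delicate point is therefore to verify the hypotheses of Theorem \ref{thm:perreeb} in this degenerate situation, interpolating near the binding between the page rotation and the Reeb flow of $\alpha|_{\partial_\infty W}$ while preserving both positivity away from the binding and the periodicity $\rho_{2\pi}=\mathrm{id}$. I would control this using the contraction of the complement of a page skeleton into an arbitrarily small neighbourhood of another skeleton, Lemma \ref{lem_ob_contraction}, which confines the dynamics relevant to the shortening argument away from the degenerate locus. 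Once these hypotheses are checked, the conclusion follows at once; in particular the argument recovers Corollary \ref{cor1stab} with no smoothness or compactness assumption on the skeleton of $W$.
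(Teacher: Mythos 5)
Your reduction is set up correctly (non-orderability follows from a bound on the contact Hofer norm of the Reeb flow via Theorem \ref{thm_hofer_orderable}, and the right stage is the standard open book on $\partial_\infty(W\times\C)$ with page $W$ and trivial monodromy), but the proposal has a genuine gap: you never verify the central hypothesis of Theorem \ref{thm:perreeb}, namely that the skeleton $L$ of the page is a \emph{loose} isotropic complex, i.e.\ that $L^n = L\setminus L^{n-1}$ is loose in $M\setminus L^{n-1}$. This is not automatic and is exactly where the geometry of the stabilization enters: in the paper's proof one uses that $W$ is obtained by attaching $N$ critical Weinstein $n$-handles to a subcritical subdomain, so the co-cores of these handles give disjoint Lagrangian disks $D_i$, each intersecting $L$ transversely in a single point with $D_i\cap L^n_i = D_i\cap L$; then \cite[Proposition 2.9]{Casals19} yields looseness of $L$ in $M$. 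This is also where the hypothesis $\dim W\geq 4$ (so that the Legendrian strata have dimension $n\geq 2$ and Murphy's $h$-principle applies) is actually used — your remark that the dimension assumption ``keeps us in the range where this mechanism is available'' gestures at this but does not supply the argument. Without looseness, Theorem \ref{thm:perreeb} simply does not apply, and no amount of Reeb-dynamical input substitutes for it.

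Your second step also misreads the dynamical hypothesis of Theorem \ref{thm:perreeb}. That theorem takes as input an honest contact form $\alpha$ on $M$ (whose Reeb flow is everywhere strictly positive) with the property that the Reeb flow is periodic \emph{near the subcritical part $L^{n-1}$ of the skeleton}; it does not accept a merely non-negative periodic loop. The rotation loop $\rho_t$ of the $\C$-factor, degenerating along the binding, is therefore a red herring, and your final paragraph — interpolating near the binding and invoking Lemma \ref{lem_ob_contraction} to ``confine the dynamics away from the degenerate locus'' — addresses a problem that does not arise: the subcritical part $L^{n-1}$ sits in the interior of the page, far from the binding, and the existence of a contact form whose Reeb flow is periodic near $L$ follows directly from the triviality of the monodromy, e.g.\ by \cite[Section 2.11]{vankoert}. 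Lemma \ref{lem_ob_contraction} plays its role inside the proofs of Proposition \ref{prophandle} and Theorem \ref{thm:perreeb}, not as a device for repairing the degeneracy of $\rho_t$. With the periodic-near-$L^{n-1}$ contact form supplied by trivial monodromy and the looseness supplied by the Casals--Murphy co-core criterion, Theorem \ref{thm:perreeb} applies and the conclusion follows; both verifications are missing from your write-up.
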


\begin{remark}\label{rem:2d} A few further remarks.
\begin{enumerate}[label=(\roman*)]
    \item\label{rem:2d1}  We expect that Theorem \ref{thm1stab} holds also for $\dim(W)=2.$ Indeed, one can see from \cite{Casals19} that in this case the skeleton of the page is a full stabilization of itself, a case where an analogue of \cite{Nakamura21} is expected to apply (see \cite{DelpinoNakamura}). 
    \item We also expect to generalize Theorem \ref{thm1} to the case of (possibly singular) loose isotropic complexes $L$ and in particular, to generalize part (i) to the case of flexible pages $W$ (with arbitrary monodromy).
    \end{enumerate} \end{remark}

A different class of examples satisfying the conditions of Theorem \ref{thm1} is that of prequantizations of symplectic manifolds, whose induced complex line bundles provide subcritical polarizations of the symplectic form. This means that they admit sections whose zero-divisors are symplectic codimension two submanifolds and have subcritical Weinstein complements, see \cite{Opsh13}. We refer to \cite{Chiang14} for the constructions of the relevant open books and to \cite{BiranCieliebak01} for information concerning subcritical polarizations. 

\begin{cor}\label{cor:Kahler}
Let $(P,\alpha)$ be the prequantization of a symplectic manifold corresponding to an Hermitian line bundle providing a subcritical polarization. Then $(P,\xi)$ for $\xi = \ker(\alpha)$ is non-orderable.   
\end{cor}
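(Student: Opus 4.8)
The plan is to reduce the statement to part~\ref{thm1:subcrit} of Theorem~\ref{thm1} by exhibiting on $(P,\xi)$ a Weinstein open book whose page is the subcritical Weinstein complement of the polarizing divisor. Once such an open book is in place, boundedness of the contact Hofer norms on $\mathcal{G}$ and $\tilde{\mathcal{G}}$, hence non-orderability, is immediate from Theorem~\ref{thm1}.

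First I would set up the polarization data. Write $(X,\omega)$ for the symplectic base and $\mathcal{L}$ for the Hermitian line bundle defining the prequantization $(P,\alpha)$, so that $P$ is the unit circle bundle $S(\mathcal{L})$ and $\alpha$ is the associated connection form whose curvature descends to $\omega$. A subcritical polarization provides a section whose zero-divisor $\Sigma \subset X$ is a symplectic codimension-two submanifold such that the complement $W = X \setminus \Sigma$ carries a subcritical Weinstein structure of finite type (see \cite{BiranCieliebak01, Opsh13}). The section trivializes $\mathcal{L}$ over $W$ and vanishes transversally along $\Sigma$.

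The key step is to produce the supporting open book. Following the constructions of \cite{Chiang14}, the phase of the polarizing section, pulled back to $P$, serves as the open book fibration: its binding is the prequantization circle bundle over $\Sigma$, a contact submanifold of codimension two, and each page is a copy of the Weinstein domain $W$, with monodromy supported near $\partial W$. I would then check that this is a Weinstein open book supporting $\xi = \ker\alpha$ in the precise sense required by Theorem~\ref{thm1}: the pages carry the Weinstein structure coming from the polarization, and the contact form $\alpha$ matches, up to a supporting deformation, the Giroux form of the open book. This is the step I expect to be the main obstacle, since it requires reconciling the connection form $\alpha$ with the Liouville/Giroux data on the pages and verifying the transversality and coorientation conditions along the binding; the relevant compatibility is exactly what the polarization setup of \cite{Chiang14} is designed to furnish.

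With the open book in hand, its page $W$ is subcritical Weinstein by hypothesis, so part~\ref{thm1:subcrit} of Theorem~\ref{thm1} applies to $(P,\xi)$: the contact Hofer norms are bounded on $\mathcal{G}$ and $\tilde{\mathcal{G}}$, and $(P,\xi)$ is non-orderable.
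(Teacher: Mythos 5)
Your proposal is correct and follows essentially the same route as the paper: the paper likewise invokes the open book construction of \cite{Chiang14} for the Boothby--Wang bundle, whose page is the complement $Y\setminus\Sigma$ of the polarizing divisor (subcritical Weinstein by the polarization hypothesis), and concludes by Theorem~\ref{thm1}\ref{thm1:subcrit}. The compatibility step you flag as the main obstacle is exactly what the paper delegates to \cite{Chiang14} as well, so there is no substantive difference.
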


See Section \ref{sec:prequ} for a discussion and generalization of Corollary \ref{cor:Kahler}.

\begin{remark}\label{rem:frol} As brought to our attention by Frol Zapolsky, the non-orderability of this class of examples follows formally from results in \cite{Courte18} and \cite{Fraser}. Our argument can more easily be made effective, in all cases, to produce explicit positive contractible loops: see Section \ref{sec:explicit}. 
\end{remark}

\begin{remark}
We expect the results in \cite{Courte18} to generalize verbatim to the setting where the page of the open book is loose inside the contact manifold, and in particular to ideal contact boundaries of subcritical Weinstein manifolds (see Section \ref{sec:proof subcrit}). 
\end{remark}

Note that Corollary \ref{cor:Kahler} implies that if the prequantization $(P,\alpha)$ is orderable (see \cite{Albers23, Bae24} for examples), then its base does not admit a subcritical polarization.
We refer to Section \ref{sec:transl} for further applications of our methods to the existence of contactomorphisms without translated points, which provide further obstructions to the existence of subcritical polarizations.

Using similar techniques as for Theorem \ref{thm1} we prove the following.

\begin{theorem}\label{thm2}
Let $(M,\xi)$ be a closed contact manifold, $L\subset M$ be the skeleton of a page of an open book decomposition supporting $\xi$ and $\tilde{\phi}_t$ be the lift of a path $\phi_t$ in $ \mathcal{G}$ to $\tilde{\mathcal{G}}$.
If there exists a contactomorphism $\psi$ such that $\psi(L)\cap \phi_t(L)=\emptyset$ for all $t\geq 0$, then the contact Hofer norm is bounded along $\tilde{\phi}_t$ and hence along $\phi_t$.
In particular, if $\tilde{\phi}_t$ is induced by the Reeb flow of some contact form, then $(M,\xi)$ is not orderable.
\end{theorem}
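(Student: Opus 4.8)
The plan is to separate the two assertions, as the non-orderability claim follows formally from the bound on the contact Hofer norm. Since the Reeb flow is generated by the contact Hamiltonian $h\equiv 1>0$, the path $\tilde{\phi}_t$ is positive, and the generalization to $\tilde{\mathcal{G}}$ of the main result of \cite{Hedicke24}—the starting point of this paper—shows that along any such positive path the norm $|\tilde{\phi}_t|_{\alpha}$ must tend to infinity whenever $(M,\xi)$ is orderable. Thus a uniform bound $|\tilde{\phi}_t|_{\alpha}\le C$ is incompatible with orderability, and the whole theorem reduces to establishing this bound under the disjointness hypothesis.

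For the bound, the geometric engine is the open book contraction Lemma \ref{lem_ob_contraction}: the complement $M\setminus L$ of the skeleton contracts, through a contact isotopy, into an arbitrarily small neighbourhood of the skeleton of another page. Conjugating by $\psi$ produces a contraction $c_s$ of $M\setminus\psi(L)=\psi(M\setminus L)$, and the hypothesis $\psi(L)\cap\phi_t(L)=\emptyset$ says precisely that the entire forward orbit $\{\phi_t(L):t\ge 0\}$ lies in this contracting region. The second, decisive, ingredient is a scaling principle: because the contraction is modelled on the negative Liouville flow toward an isotropic core, its conformal factor degenerates, so that conjugating by $c_s$ multiplies the generating contact Hamiltonian—hence the Hofer length—of any isotopy supported in $M\setminus\psi(L)$ by a factor tending to $0$ as $s\to 1$. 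In particular, contactomorphisms supported in the complement of a neighbourhood of a skeleton have arbitrarily small contact Hofer norm.

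I would then build, for each fixed $T$, a path representing $\tilde{\phi}_T$ of length bounded independently of $T$. Writing $\tilde{\phi}_t=\tilde{\psi}\cdot\tilde{\sigma}_t$ with $\sigma_t=\psi^{-1}\phi_t$, the triangle inequality reduces us, up to the fixed constant $|\tilde{\psi}|_{\alpha}$, to bounding $\tilde{\sigma}_t$, where now $\sigma_t(L)\cap L=\emptyset$ for all $t$, i.e. the path displaces the skeleton throughout. Using this displaceability I would reroute the time-$T$ map through the contraction: insert $c_s$ and its inverse so as to push the nontrivial part of the isotopy into the strongly contracted region, where the scaling principle bounds its length uniformly, while the two insertions $c_s,c_s^{-1}$ contribute only a fixed constant. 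Concatenating exhibits $|\tilde{\phi}_T|_{\alpha}\le C$ for all $T$.

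The main obstacle is the bookkeeping in the universal cover together with the conformal estimate. On the one hand, one must verify that the rerouted path represents $\tilde{\phi}_T\in\tilde{\mathcal{G}}$ and not merely $\phi_T\in\mathcal{G}$: the loop built from $c_s$ and $c_s^{-1}$ must be contractible, which is exactly where the contractibility of the open book contraction and the displaceability of $L$ enter. On the other hand, the scaling estimate must be uniform in $T$; this is possible precisely because the displacing map $\psi$, and hence the contraction $c_s$, can be fixed once and for all, so that the conformal factor bounding the rerouted piece is independent of the time $T$ one wishes to reach. Controlling the interaction of this conformal factor with the norm—keeping in mind that the contact Hofer norm transforms only up to a conformal factor under conjugation—is the delicate technical point.
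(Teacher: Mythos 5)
Your reduction of non-orderability to a uniform bound on $|\tilde{\phi}_t|_{\alpha}$ via Theorem \ref{thm_hofer_orderable} is correct, and your ``scaling principle'' is essentially the content of Proposition \ref{prophandle}. But there is a genuine gap in the middle step: you apply the contraction--conjugation trick to the path $\sigma_t=\psi^{-1}\phi_t$ itself, and this cannot work as described, because $\sigma_t$ is a \emph{global} isotopy (e.g.\ a Reeb flow, whose contact Hamiltonian is nowhere zero), not an isotopy compactly supported in a skeleton-complement. The hypothesis $\psi(L)\cap\phi_t(L)=\emptyset$ says the isotopy \emph{displaces} $L$ from $\psi(L)$; it gives no support condition whatsoever, so ``the nontrivial part of the isotopy'' that you propose to push into the contracted region is undefined. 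Moreover, conjugating a globally supported isotopy by the contraction of Lemma \ref{lem_ob_contraction} does not shrink its length: the conformal factor of that flow is expanding near the repelling skeleton $L_-$, and a global isotopy always has support there, so the conjugated Hamiltonian is not uniformly small. The missing bridge --- and the key move in the paper's proof --- is the fragmentation Lemma \ref{lemfragmentation}: since $\phi_t(L)\cap\psi(L)=\emptyset$ for all $t$, one writes $\phi_t=g_t\circ h_t$ with $g_t$ compactly supported in $M\setminus L$ and $h_t$ compactly supported in $M\setminus\psi(L)$; only \emph{then} does Proposition \ref{prophandle} apply to each factor (noting that $\psi(L)$ is the skeleton of a page of the pushed-forward open book $(\psi(K),\theta\circ\psi^{-1})$), giving $t$-independent bounds on $|\tilde{g}_t|_{\alpha}$ and $|\tilde{h}_t|_{\alpha}$ whose sum bounds $|\tilde{\phi}_t|_{\alpha}$. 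Your proposal never performs this decomposition, and without it the argument does not close.

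Two secondary problems. First, your factorization $\tilde{\phi}_t=\tilde{\psi}\cdot\tilde{\sigma}_t$ presupposes $\psi\in\mathcal{G}$, but the theorem only assumes $\psi$ is a contactomorphism, possibly not isotopic to the identity; in that case $\tilde{\psi}$ and $|\tilde{\psi}|_{\alpha}$ do not exist. The paper's route never needs a lift of $\psi$: it uses $\psi(L)$ only as the skeleton of another supporting open book. Second, your assertion that contactomorphisms supported in the complement of a skeleton have \emph{arbitrarily small} contact Hofer norm is false --- it would force $|\cdot|_{\alpha}$ to vanish on $\mathcal{G}(M\setminus L)$, contradicting nondegeneracy of the norm on $\mathcal{G}$ --- because the conjugation estimate $|\tilde{f}_1|_{\alpha}\leq 2|\tilde{g}_s|_{\alpha}+|\tilde{f}_1|_{g_s^{\ast}\alpha}$ carries the fixed cost $2|\tilde{g}_s|_{\alpha}$ of the conjugating isotopy, which does not tend to zero. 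The correct (and sufficient) statement is the uniform bound of Proposition \ref{prophandle}.
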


See Section \ref{secfrag} for the proofs of Theorem \ref{thm1} and Theorem \ref{thm2}.

\begin{remark}
In fact the proof of Theorem \ref{thm2} shows the following:
Let $(K_1,\theta_1)$ and $(K_2,\theta_2)$ be two (possibly different) open book decompositions supporting $(M,\xi)$.
If there exist a page $W_1$ of $(K_1,\theta_1)$ with skeleton $L_1$, a page $W_2$ of $(K_2,\theta_2)$ with skeleton $L_2$ and a Reeb flow $\phi_t^{\alpha}$ such that $\phi_t^{\alpha}(L_1)\cap L_2=\emptyset$, then $(M,\xi)$ is not orderable.
\end{remark}

In many cases the Reeb flow $\phi_t^{\alpha}$ can be chosen such that the image of $L$ under the Reeb flow is a pre-Lagrangian submanifold in $M$.
Using results on displacement of pre-Lagrangian tori, see \cite{Marincovic16}, it follows:

\begin{cor}\label{corst}
For any $n\geq 1$ the manifold $T^n\times S^{n+1}$, equipped with its standard contact structure as the ideal contact boundary of the $1$-stabilization of $T^{\ast}T^n$, is not orderable.
\end{cor}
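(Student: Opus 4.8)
The plan is to realize $M = T^n \times S^{n+1}$ together with the contact structure and the displacing data required by Theorem \ref{thm2} in an explicit model, and then to invoke a displacement result for pre-Lagrangian tori. The $1$-stabilization of $W = T^{\ast}T^n$ is the subcritical Weinstein manifold $W \times \C \cong T^n_q \times \R^n_p \times \C_z$ (see \cite[Theorem 14.16]{Cieliebak12}), whose Liouville form I take to be $\lambda = \sum_i p_i\, dq_i + \tfrac{1}{2} r^2\, d\theta$ with $z = r e^{i\theta}$. Its ideal boundary is the hypersurface $\{\sum_i p_i^2 + r^2 = 1\}$, which is $T^n \times S^{n+1}$ with $S^{n+1} \subset \R^n_p \times \C_z$, and $\alpha = \lambda|_M$ is the standard contact form. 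The associated open book has page $W$ with trivial monodromy, so its skeleton is the zero-section $L = T^n = \{p = 0\}$, sitting inside $M$ at $\{p = 0,\ r = 1\}$.

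First I would identify the Reeb flow along $L$. A direct computation using $\sum_i p_i\, dp_i + r\, dr = 0$ on $M$ shows that along $\{p = 0,\ r = 1\}$ one has $dr|_{TM} = 0$, whence $\alpha(\partial_\theta) = \tfrac12 \neq 0$ and $\iota_{\partial_\theta} d\alpha|_{TM} = -r\, dr|_{TM} = 0$, so the Reeb vector field is proportional to $\partial_\theta$ there. Hence $\phi^{\alpha}_t$ rotates the $z$-coordinate and the image of $L$ under the Reeb flow is the torus $\Lambda = \{p = 0,\ r = 1\} = T^n_q \times S^1_\theta \cong T^{n+1}$. Since $p$ and $r$ are constant along $\Lambda$, we get $d\alpha|_{\Lambda} = \sum_i dp_i \wedge dq_i + r\, dr \wedge d\theta = 0$, so $\Lambda$ is a pre-Lagrangian torus and $\phi^{\alpha}_t(L) \subset \Lambda$ for all $t$.

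It then suffices to produce a contactomorphism $\psi$ with $\psi(L) \cap \Lambda = \emptyset$: this gives $\psi(L) \cap \phi^{\alpha}_t(L) = \emptyset$ for all $t \geq 0$, and Theorem \ref{thm2} applied to the Reeb path $\phi^{\alpha}_t$ then yields non-orderability of $(M,\xi)$. I would obtain $\psi$ by displacing the pre-Lagrangian torus $\Lambda$ off itself using the results of \cite{Marincovic16}; since $L \subset \Lambda$, displacing $\Lambda$ displaces $L$ as well.

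The main obstacle is precisely this displacement. The naive attempt — translating in the fiber direction $p$ — is not globally defined on $T^{\ast}T^n$ (the corresponding Hamiltonian $\langle v, q\rangle$ is multivalued on $T^n$), and indeed the Lagrangian zero-section of $T^{\ast}T^n$ is non-displaceable; this is the a priori obstruction one must overcome. What rescues the argument is that we work in the genuinely contact setting, where the extra stabilization direction (the $\C$-factor, contributing the $S^1_\theta$ of $\Lambda$) provides the room to displace: the pre-Lagrangian torus $\Lambda$ has finite displacement energy and is displaceable by a contact isotopy, which is the content of \cite{Marincovic16}. For $n = 1$ this recovers the displaceability of the Clifford-type torus $T^2 \subset S^1 \times S^2$, and hence the non-orderability of the standard $S^1 \times S^2$.
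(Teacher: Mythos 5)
Your proposal is correct and follows essentially the same route as the paper: the same explicit model of the open book on the ideal boundary of $T^{\ast}T^n\times\C$ with page the unit disc cotangent bundle and skeleton the zero-section, the same identification of the Reeb-flow image of the skeleton as the pre-Lagrangian toric fibre $\{p=0,\,|z|=1\}\cong T^{n+1}$, the same displacement of that fibre via \cite[Theorem 1.2]{Marincovic16}, and the same application of Theorem \ref{thm2}. The only cosmetic difference is that you verify the Reeb direction along $\Lambda$ by a direct computation, whereas the paper computes the page's Liouville vector field explicitly; both supply the same input to Theorem \ref{thm2}.
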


Furthermore, Theorem \ref{thm2} allows us to recover all of the examples of non-orderable manifolds due to Eliashberg-Kim-Polterovich \cite{Eliashberg06} as follows.

\begin{cor}\label{corekp}
Let $W$ be a Liouville manifold. Then the ideal contact boundary of the $2$-stabilization $W \times \C^2$ is non-orderable.
\end{cor}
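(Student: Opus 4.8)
The plan is to apply Theorem \ref{thm2}, in the strengthened two--open--book form recorded in the remark immediately following it, to the pair of open books on $M:=\partial_\infty(W\times \C^2)$ coming from the two complex factors, together with a Reeb flow that is tangent to one of the two bindings. The mechanism is that the two relevant page skeleta sit at ``opposite infinities'' of the $\C^2$--factor, so one of them is swallowed by the binding of the open book associated to the other.

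First I would recall the open book structure of $M$. Writing $W\times \C^2=(W\times \C_{z_1})\times \C_{z_2}$, the angular coordinate of the $\C_{z_2}$--factor defines a supporting open book $(K_2,\theta_2)$ with trivial monodromy whose completed page is the one--stabilization $W\times \C_{z_1}$ and whose binding is $B_2=\partial_\infty(W\times \C_{z_1})=\{z_2=0\}$. Symmetrically, the $\C_{z_1}$--factor yields a supporting open book $(K_1,\theta_1)$ with page $W\times \C_{z_2}$ and binding $B_1=\{z_1=0\}$. I would then identify the relevant skeleta: the skeleton of a page of $(K_1,\theta_1)$ is $L_1=\operatorname{skel}(W)\times\{z_2=0\}$, located at $z_1=\infty$, while the skeleton of a page of $(K_2,\theta_2)$ is $L_2=\operatorname{skel}(W)\times\{z_1=0\}$, located at $z_2=\infty$.

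The key geometric observation is that $L_1$ and $L_2$ lie at opposite ends of the $\C^2$--direction: since $L_1\subset\{z_2=0\}$ it is contained in the binding $B_2$, whereas $L_2$ lies in the interior of a page of $(K_2,\theta_2)$ (at $z_2=\infty$, away from its binding) and hence $L_2\cap B_2=\emptyset$. It therefore suffices to produce a Reeb flow preserving $B_2$. For this I would choose $\alpha$ to be a supporting (Giroux) form adapted to $(K_2,\theta_2)$ whose Reeb vector field is tangent to the binding, so that $B_2$ is a union of closed Reeb orbits; this is a standard feature of the Thurston--Winkelnkemper/Giroux construction. (In the model case $W=\mathrm{pt}$, where $M=S^{2n+1}$, this $\alpha$ is the standard contact form and $\phi^\alpha_t$ is the Hopf flow, which indeed preserves $\{z_2=0\}$.) Then $\phi^\alpha_t(B_2)=B_2$ for all $t$, and since $L_1\subset B_2$ we obtain $\phi^\alpha_t(L_1)\subset B_2$, whence $\phi^\alpha_t(L_1)\cap L_2=\emptyset$ for all $t\ge 0$. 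By the strengthened form of Theorem \ref{thm2}, $(M,\xi)$ is non-orderable, recovering the Eliashberg--Kim--Polterovich examples.

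The main obstacle I anticipate is the careful bookkeeping at the ideal boundary: verifying that the two $\C$--factor open books genuinely support $\xi$ (with trivial monodromy and the stated pages), and, above all, establishing the nested ``opposite infinities'' picture, namely that the page skeleton $L_1$ of one open book lies inside the binding $B_2$ of the other while $L_2$ stays away from $B_2$. Arranging a supporting contact form whose Reeb flow preserves $B_2$ is the remaining ingredient; once these structural facts are in place, the disjointness $\phi^\alpha_t(L_1)\cap L_2=\emptyset$ and the non-orderability conclusion are immediate.
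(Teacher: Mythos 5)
Your proposal is correct, but it takes a genuinely different route from the paper's own proof. The paper works with a single open book on $\partial_{\infty}(W\times\C^2)$ whose pages are the $1$-stabilizations $W\times\C$, chooses a contact form whose Reeb flow is \emph{periodic} near the skeleta (so that the Reeb orbit of a skeleton is the union of all skeleta, which in the model neighbourhood $(-\epsilon,\epsilon)\times W\times\C$ of Lemma \ref{lem_ob_pages} is $(-\epsilon,\epsilon)\times \mathrm{Skel}(W)\times\{0\}$), and then explicitly constructs the displacing contactomorphism $\psi$ demanded by Theorem \ref{thm2} as a cut-off of the contact Hamiltonian $h=2y$ lifting a translation in the $\C$-factor \emph{inside the page}. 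You instead invoke the two-open-book strengthening recorded in the remark following Theorem \ref{thm2}, with the two open books coming from the two complex factors, and obtain the disjointness $\phi_t^{\alpha}(L_1)\cap L_2=\emptyset$ structurally: $L_1\subset B_2$, the binding $B_2$ is invariant under the Reeb flow of an adapted form, and $L_2\cap B_2=\emptyset$ since a page skeleton lies in the interior of its page. Your identifications ($L_1=\mathrm{Skel}(W)\times\{z_2=0\}$ at $z_1$-infinity, $L_2=\mathrm{Skel}(W)\times\{z_1=0\}$ at $z_2$-infinity) agree with the paper's own computation that the skeleton of the page $W\times\C$ is $\mathrm{Skel}(W)\times\{0\}$, so the key inclusions are sound, and both arguments feed into the same machinery (Lemma \ref{lemfragmentation}, Proposition \ref{prophandle}, Theorem \ref{thm_hofer_orderable}). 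What your version buys is the elimination of both the auxiliary displacing map $\psi$ and the periodicity assumption on the Reeb flow; what it costs is exactly the bookkeeping you flag: verifying that both $\arg z_i$ open books support $\xi$ with ideal Giroux forms (so that $M\setminus L_1$ and $M\setminus L_2$ are genuine skeleton-complements in the sense of Proposition \ref{prophandle}), which follows from the ideal Liouville domain picture \cite{Giroux20} (cf. \cite[Lemma 3.9]{Courte18}) as in the proof of Corollary \ref{corst}, and choosing an adapted form with Reeb field tangent to $B_2$ --- a genuine choice provided by the Thurston--Winkelnkemper/Giroux construction, not automatic for an arbitrary supporting form. The paper's more hands-on route has the side benefit of producing the explicit displacing contactomorphism that is reused in the effective loop constructions of Section \ref{sec:explicit}.
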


Part (ii) of Theorem \ref{thm1} and Corollary \ref{corst} do not show that the contact Hofer norm is bounded on $\mathcal{G}(T^n\times S^{n+1})$ or its universal cover. This raises the following questions.

\begin{qu} 
1. Are there non-orderable contact manifolds that have an unbounded contact Hofer norm on $\tilde{\mathcal{G}}$?\\
2. Are there contact manifolds with a positive loop of contactomorphisms and unbounded contact Hofer norm on $\mathcal{G}$?
\end{qu}
We believe that the answer to the second question is positive. This will be investigated in \cite{ASZ}. Georgios Dimitroglou-Rizell suggested that the standard $\R P^{3}$ might be an example with bounded contact Hofer norm on $\mathcal{G},$ while on $\tilde{\mathcal{G}}$ it is known to be unbounded by Givental's quasimorphism \cite{Giv}. See Section \ref{sec:almostReeb} for further questions of this kind.

\begin{remark}
We note that in all cases where we proved non-orderability, by \cite[Section 1.7]{Eliashberg06}, we obtain squeezing results for domains inside $(SM \times S^1, \ker(\lambda-dt))$ where $SM = \R \times M$ is the symplectization and $\lambda = e^\theta \alpha$ for $\theta$ the coordinate on $\R,$ or inside $W \times S^1$ in case where $M = \partial_{\infty} W$ for a Liouville domain $W.$
\end{remark}

Finally, we formulate a rather ad-hoc generalization of Theorem \ref{thm1} \ref{thm1:loose} in the spirit of Theorem \ref{thm2}, which implies Theorem \ref{thm1stab}. Recall that an isotropic complex, see \cite[Definitions 3.1, 3.4]{Courte18}, is said to be {\bf loose} if its $n$-dimensional stratum is loose in the complement of its $(n-1)$-dimensional skeleton.

\begin{theorem}\label{thm:perreeb}
Let $(M,\xi)$ be a closed contact manifold, $L\subset M$ be the skeleton of a page of a Weinstein open book decomposition supporting $\xi$ and $\alpha$ be a contact form whose Reeb flow $\phi_t^{\alpha}$ is periodic near the subcritical part $L^{n-1}.$ If $L$ is a loose isotropic complex, then the contact Hofer norm of the lift $\tilde{\phi}_t^{\alpha}$ to $\widetilde{\mathcal{G}}$ is bounded as a function of $t.$ In particular $(M,\xi)$ is not orderable. \end{theorem}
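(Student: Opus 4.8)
The plan is to reduce the statement entirely to the displacement criterion of Theorem~\ref{thm2}, in the two--book form of the Remark that follows it. That criterion already packages both conclusions we want: if $L_1$ is the skeleton of a page of one supporting open book, $L_2$ the skeleton of a page of another, and $\phi_t^\alpha(L_1)\cap L_2=\emptyset$ for every $t\ge 0$, then the contact Hofer norm of $\tilde\phi_t^\alpha$ is bounded in $t$ and $(M,\xi)$ is non-orderable. (Boundedness of the norm is what forces non-orderability, this being the mechanism already built into Theorem~\ref{thm2} and, ultimately, the extension to $\tilde{\mathcal{G}}$ of \cite{Hedicke24}: on an orderable manifold the lift of a positive path must leave every Hofer ball.) Thus the entire task is to manufacture a second page skeleton $L_2$, possibly with respect to a different ideal Giroux form, whose forward Reeb orbit disjointness $\phi_t^\alpha(L_1)\cap L_2=\emptyset$ holds for all $t\ge 0$.

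I would build this disjointness stratum by stratum along $L=L^n\cup L^{n-1}$, handling the subcritical skeleton by periodicity. By hypothesis $\phi_t^\alpha$ is periodic, of some period $\tau$, on a neighbourhood of $L^{n-1}$; hence the forward orbit $\bigcup_{t\ge 0}\phi_t^\alpha(L^{n-1})=\bigcup_{t\in[0,\tau]}\phi_t^\alpha(L^{n-1})$ is compact and of dimension at most $n$. Since a second subcritical skeleton $L_2^{n-1}$ has dimension $n-1$ and $n+(n-1)<\dim M=2n+1$, a generic position of $L_2^{n-1}$ — which the $h$-principle for subcritical isotropics allows us to realise while keeping $L_2^{n-1}$ part of a genuine page skeleton — is already disjoint from this compact orbit. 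This settles the subcritical part: $\phi_t^\alpha(L^{n-1})\cap L_2^{n-1}=\emptyset$ for all $t\ge 0$.

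The substance lies in the top stratum $L^n$, whose Reeb orbit is not assumed periodic and whose forward image may be dense. Here I would use that $L^n$ is loose in the complement of $L^{n-1}$. By Murphy's $h$-principle for loose Legendrians (the Legendrian flexibility alluded to in the introduction), I would isotope the top stratum, relative to $L^{n-1}$ and within its formal class, into an arbitrarily small neighbourhood of the periodic region surrounding $L^{n-1}$; the isotoped complex $L_1$ is then the skeleton of a page with respect to a new ideal Giroux form, and Lemma~\ref{lem_ob_contraction} is what certifies that it still arises from a supporting open book. Once $L_1$ is confined to the periodic region, its entire forward orbit $\bigcup_{t\ge 0}\phi_t^\alpha(L_1)=\bigcup_{t\in[0,\tau]}\phi_t^\alpha(L_1)$ is again compact, of dimension at most $n+1$. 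A final application of looseness, now of the top stratum of $L_2$, displaces $L_2^n$ off this compact orbit via Murphy's $h$-principle, upgrading the subcritical disjointness of the previous paragraph to the full disjointness $\phi_t^\alpha(L_1)\cap L_2=\emptyset$ demanded by Theorem~\ref{thm2}.

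I expect the main obstacle to be exactly the interface between the two regimes. Looseness is available only in the complement of $L^{n-1}$, precisely where the periodicity is \emph{not} assumed, while the periodicity lives on a neighbourhood of $L^{n-1}$, where Murphy's $h$-principle does not apply. Squeezing $L^n$ into the periodic neighbourhood must therefore be carried out compatibly with the fixed subcritical structure along $L^{n-1}$, and one must verify that the squeezed complex remains the skeleton of a Weinstein page for some ideal Giroux form, so that the hypotheses of Theorem~\ref{thm2} are truly met; this bookkeeping is the delicate point. Should it prove intractable, I would instead follow the strategy of \cite{Courte18} and \cite{Fraser}, bypassing Theorem~\ref{thm2} and constructing the displacing isotopy together with a bounded--energy primitive directly from the periodic flow near $L^{n-1}$ and the flexibility of $L^n$ — an approach the authors indicate should generalise to the loose setting.
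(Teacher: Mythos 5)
The central step of your plan --- using Murphy's $h$-principle to squeeze the top stratum of $L$, rel $L^{n-1}$, into the neighbourhood where the Reeb flow is periodic --- fails for topological reasons. An $h$-principle only upgrades a \emph{formal} (in particular smooth) isotopy to a genuine Legendrian one, and in general no smooth isotopy of $L$ into a neighbourhood of $L^{n-1}$ exists: already for $M=T^n\times S^{n+1}=\partial_\infty(T^{\ast}T^n\times\C)$ the skeleton is homologically essential, so it cannot be isotoped into a small neighbourhood of its subcritical part; more generally the closures of the top strata are attached along $L^{n-1}$ and must leave any tubular neighbourhood of it (compare an arc rel its endpoints). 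Looseness gives no help here, since it cannot create a formal isotopy that does not exist. Moreover, even granting the squeeze, your final step is dimensionally critical rather than generic: the compact forward orbit of $L_1$ has dimension $n+1$ and $L_2$ has dimension $n$, so $n+(n+1)=2n+1=\dim M$ and general position yields isolated intersection points, not disjointness; removing them would require exactly the intersection-removal use of the $h$-principle from \cite{Courte18}, which the paper explicitly does not make (see the remark after Theorem \ref{thm:perreeb}: the $h$-principle is used ``to shorten the contact Hofer norm by \cite{Nakamura21}'', not to disjoin a Legendrian from a skeleton or an orbit). So the reduction to Theorem \ref{thm2} cannot be carried out: its displacement hypothesis is genuinely stronger than the hypotheses of Theorem \ref{thm:perreeb}.

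The paper's proof instead keeps $L$ fixed and perturbs the \emph{isotopy}. Periodicity is used only to control the image $\mathbb{L}'$ of $L^{n-1}$ under the Reeb flow: by Lemma \ref{lem_ob_loose} (built on Lemma \ref{lem_contract_loose}) one passes to an equivalent open book with the same $L^{n-1}$ whose critical components have loose charts $V_i$ disjoint from $\mathbb{L}'$. A Hofer-small correction $g_t$ supported near $\mathbb{L}'$ then disjoins the subcritical part of the isotopy from $L_+$ (here the dimension count is $n+n<2n+1$, so general position does work), and fragmentation produces $\phi_t=f_t^-$ which fixes a neighbourhood of $L^{n-1}$ and, crucially, coincides with the Reeb flow on $V$, hence is \emph{strict} there. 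The relative Nakamura theorem (Theorem \ref{thm:naka2}) shortens $\phi_t$ to $\psi_t$ with $|\tilde{\psi}_1|_{\alpha}\le 2C(V)+\delta$ --- strictness is what gives $C(\phi_1(V))=C(V)$ and hence a bound uniform in $t$ --- and a last fragmentation comparing $\psi_t$ with $\phi_t$ yields $|\tilde{f}_t|_{\alpha}\le 3\delta+C(V)+2C_++C_-$ independently of $t$. In short, the mechanism is direct Hofer-norm shortening of the Reeb flow via Theorem \ref{thm_hofer_orderable}, not orbit displacement; your fallback (following \cite{Courte18, Fraser}) is precisely the generalization the authors flag as expected but not established.
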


\begin{remark}
We remark that our use of the $h$-principle for loose Legendrian complexes is different from that in \cite{Courte18}. Indeed, we do not use it to remove intersections with the skeleton of a page, but instead to shorten the contact Hofer norm by \cite{Nakamura21}.    
\end{remark}

Finally, we comment on Legendrian versions of our results, which have been more extensively studied in the literature. Namely, analogously to the Chekanov Hofer distance for Lagrangian submanifolds, it is possible to define a contact Hofer distance for Legendrian isotopy classes, see \cite{Rosen}. The results relating orderability of Legendrian isotopy classes to the contact Hofer diameter also hold in the Legendrian case \cite{Hedicke24}. Let $\mathcal{L}$ be an isotopy class of a closed Legendrian $\Lambda.$ Nakamura \cite{Nakamura21} proves that the contact Hofer diameter of the space of Reeb flow images $\{ \phi_t^{\alpha}(\Lambda)\} \subset \mathcal{L}$ of every closed loose Legendrian $\Lambda$ in a closed contact manifold is bounded. This implies immediately that there exists a positive loop of Legendrians based at $\Lambda,$ partially recovering a result of Liu \cite{Liu20}. We expect that the techniques \cite{Nakamura21} should extend to an analogous statement for the universal cover $\widetilde{\mathcal{L}},$ which would completely recover Liu's result.

\subsection{Outline of the proofs}\label{sec: sketch}

The arguments proving the above results combine the following main ingredients. 

First, Lemma \ref{lem_minimum} implies that non-orderability is equivalent to the shortening of the Reeb flow $\widetilde{\phi}_t^\al$ in $\widetilde{\mathcal G}$ with respect to the contact Hofer pseudo-norm associated with $\al.$ Namely, the existence of $T>0$ such that $|\widetilde{\phi}_T^\al|_{\al}<T.$ We note that this can be carried out for {\it any} contact form $\al$ as non-orderability is an invariant notion.

Second, we prove various instances of such shortening by considering properties of open book decompositions of contact manifolds. Notably, in Proposition \ref{prophandle} we prove that the contact Hofer norm is uniformly bounded on the image $\widetilde{\mathcal{G}}(M \setminus L) \to \widetilde{\mathcal{G}}(M),$ where $L$ is the skeleton of one page of an open book on $M.$ We call such $M \setminus L$ skeleton-complements. Note that if the page is Weinstein, the skeleton is an isotropic complex, while if it is Liouville, this is not generally the case. In this step we use the existence of a contact flow contracting the complement of one skeleton into an arbitrarily small neighbourhood of a second one (see Lemma \ref{lem_ob_contraction} or \cite{Courte18}).

This reduces the shortening of $\widetilde{\phi}_t^\al$ essentially to considering the isotopy $\widetilde{\phi}_t^\al(L)$ of isotropic complexes. For instance, if this isotopy is disjoint from a skeleton $L'$ of another page or in fat from the image a page $\psi(L')$ under a contactomorphism, then using Lemma \ref{lemfragmentation} about fragmentation, we reduce the isotopy $\widetilde{\phi}_t^\al$ to the product of two isotopies supported in skeleton-complements and hence is bounded in the contact Hofer norm. This proves Theorem \ref{thm2} which implies Corollaries \ref{corekp} and \ref{corst}. In particular, this yields the non-orderability of the standard $S^1 \times S^2.$

Third, if the skeleton $L$ of a page is a loose Legendrian as defined in \cite{Murphy12} then following \cite[Theorem 1.2]{Nakamura21} we can produce an isotopy $(f_t)_{0 \leq t \leq T}$ of uniformly bounded $\al$ Hofer length which brings each $\phi^{\al}_t(L)$ for $0 \leq t \leq T$ to a $C^0$-small neighbourhood of $L$ (here it is important that the $\phi_t^{\al}$ are strict contactomorphisms, i.e. they preserve $\al$). In particular, by fragmentation again, $[f_t \phi_t^{\al}] \in \widetilde{\mathcal{G}}$ is of uniformly bounded $\al$ contact Hofer norm. This shows that $|\widetilde{\phi}_T^{\al}|_{\al}$ is uniformly bounded in $T,$ which yields Theorem \ref{thm1:loose}. Furthermore, if $W$ is a Weinstein manifold with a smooth skeleton $L$, then $L$ considered as a Legendrian submanifold in the ideal boundary of $W \times \C$ is loose due to \cite{Casals19}, which yields Corollary \ref{cor1stab}.

Finally, to prove Theorem \ref{thm1stab} we combine the approaches described in the two preceding paragraphs: the case of a subcritical skeleton, and the case of a smooth loose skeleton. Indeed, we first modify the Reeb isotopy to fix the subcritical part of the skeleton, and then proceed by a relative version of the loose arguments. To prove the necessary uniform estimates, we assume the Reeb flow to be periodic on the subcritical part of the skeleton. This produces Theorem \ref{thm:perreeb}. Its conditions are satisfied for contact boundaries of subcritical domains by another application of \cite{Casals19}.

\subsection{Future directions} 
It would be interesting to make further progress on the question of orderability of the ideal contact boundaries of Weinstein manifolds beyond Theorem \ref{thm1stab}. For instance, the properties of flexible Weinstein manifolds often mirror those of subcritical ones, cf. \cite{Lazarev}. Therefore, in view of Theorem \ref{thm1stab}, we expect that ideal contact boundaries of flexible Weinstein manifolds should be non-orderable. We plan to investigate this question in future work.

Similarly, one may wonder, following \cite[Section 1.9]{Eliashberg00}, whether overtwisted contact manifolds are non-orderable. There currently exists no known example of this phenomenon, as we define it, in the literature. We refer to Section \ref{sec:ord} for further discussion of the state of the art. We hope to apply the $h$-principles of \cite{Borman15} to this question.

Finally, it is suggested by our results that there should be a close relation between non-orderability and the existence of contactomorphisms without translated points, to be investigated in the future.

\section*{Acknowledgments}
We thank Georgios Dimitroglou Rizell for suggesting proving Theorem \ref{thm1stab} and for very helpful related discussions. We thank Yasha Eliashberg for an encouraging discussion and mentioning squeezing in the context of this paper. We thank Peter Albers for suggesting an improvement to Theorem \ref{thm2}, Paul Biran for useful communications about subcritical polarizations, Lukas Nakamura for useful communications regarding Remark \ref{rem:2d}(i), Frol Zapolsky for contributing Remark \ref{rem:frol} and Jun Zhang for his contribution to Section \ref{sec:almostReeb}. 
We also thank the organizers of the 2024 Conference ``Contact and Lorentzian geometry II" in Bochum, as well as the organizers of the seminars in Heidelberg University and the University of Maryland for a possibility to present preliminary versions of our results.

J.H. was supported by the CRM-ISM and CIRGET postdoctoral fellowships and by the Fondation Courtois. E.S. was supported by an NSERC Discovery Grant, the Fondation Courtois, and a Sloan Research Fellowship.

\section{Background}

\subsection{Orderability}\label{sec:ord}

Recall that $\tilde{\mathcal{G}}$ can be considered as the set of homotopy classes of paths in $\mathcal{G}$ starting at the identity.
\begin{nota}
Given a path $(\phi_s)_{s\in I}$ in $\mathcal{G}$ starting at the identity, we denote its lift to $\tilde{\mathcal{G}}$ at time $t\in I$ by $\tilde{\phi}_t$.
\end{nota}

Following \cite{Eliashberg00} we define a bi-invariant relation on the groups $\mathcal{G}$ and $\Tilde{\mathcal{G}}$ for any closed cooriented contact manifold $(M,\xi=\ker\alpha)$ as follows.

Given $\tilde{\phi},\tilde{\psi}\in\Tilde{\mathcal{G}}$ we say that $\tilde{\phi}\preccurlyeq \tilde{\psi}$ if and only if $\tilde{\phi}^{-1}\tilde{\psi}$ can be represented by a non-negative path, i.e., if and only if there exists a path $(f_t)_{t\in[0,1]}$ in $\mathcal{G}$ representing $\tilde{\phi}^{-1}\tilde{\psi}$ such that $\alpha\left(\frac{d}{dt}f_t\right)\geq 0$ for all $t$.

A similar relation can be defined on $\mathcal{G}$ by setting $\phi\preccurlyeq \psi$ if and only if there exists a non-negative path connecting $\mathrm{id}$ and $\phi^{-1}\psi$.

As in \cite{Eliashberg00} we call the contact manifold $(M,\xi)$ \textbf{orderable} if the relation $\preccurlyeq$ is a partial order on $\Tilde{\mathcal{G}}$.
Otherwise, we call the contact manifold \textbf{non-orderable}.
A closed contact manifold is orderable if and only if there are no contractible positive loops in $\mathcal{G}$ and non-orderable iff the relation $\preccurlyeq$ is trivial.

Moreover, following \cite{Chernov16} we call $\mathcal{G}$ orderable if the relation $\preccurlyeq$ defines a partial order on $\mathcal{G}$, which is equivalent to the non-existence of any positive loops in $\mathcal{G}$.

Orderability is known to hold in several cases, such as certain prequantization spaces \cite{Eliashberg00,Borman152}, spherical cotangent bundles \cite{Chernov102} and more generally any contact manifold that admits an atoroidal Liouville at infinity filling with non-vanishing symplectic cohomology \cite{Colin19, Cant23}.
The orderability of $\mathcal{G}$ seems to be more restrictive, as it prevents for example the existence of periodic Reeb flows. 
This holds, for instance, for a class of spherical cotangent bundles of manifolds that admit an open cover; see \cite{Chernov102}.

On the other hand, very little seems to be known about non-orderable contact manifolds.
So far, all examples of contractible positive loops arise from the construction in \cite{Eliashberg06}, where the authors construct contractible positive loops on contact manifolds that are the ideal boundary of the $2$-stabilization of some Liouville manifold.

In the $3$-dimensional case non-orderability of $\mathcal{G}$ was proved in \cite{Casals16} for the standard contact structure on $S^2\times S^1$ and a certain class of overtwisted manifolds using fibre connected sums around orbits of a given positive loop. The case of $\widetilde{\mathcal{G}},$ however, remained open.

From the work Borman, Eliashberg and Murphy \cite{Borman15} and the work of Liu \cite{Liu20} it follows that overtwisted manifolds of any dimension are non-orderable in a slightly weaker sense.

A natural analogue of orderability can be defined on isotopy classes of closed Legendrian submanifolds and its universal cover, see e.g. \cite{Chernov102}.
Let $\mathcal{L}$ be the isotopy class of a closed Legendrian $\Lambda$, i.e., the orbit of $\Lambda$ under the action of $\mathcal{G}$.
We say that an isotopy of Legendrians $\lambda_t$ in $\mathcal{L}$ is positive (non-negative) if there exists a parametrisation $\iota_t\colon \Lambda\rightarrow\Lambda_t$ such that for one and hence any positive contact form $\alpha$ and all $p\in\Lambda$ we have
$$\alpha\left(\frac{d}{dt}\iota_t(p)\right)>0(\geq 0).$$
As in the case of contactomorphisms, this allows to define a relation on $\mathcal{L}$ and $\tilde{\mathcal{L}}$, which is a partial order iff there is no positive (contractible) loop of Legendrians \cite[Proposition 4.5]{Chernov102}.

\subsection{The contact Hofer norm}

Inspired by the Hofer norm for compactly supported Hamiltonian diffeomorphisms of a symplectic manifold, one can define a family of (pseudo-) norms on $\mathcal{G}$, see \cite{Egor}.
Unlike the Hofer norm, these norms are not conjugation invariant but depend on the choice of a contact form.

Let $(\phi_t)_{t\in [0,1]}$ be a path in $\mathcal{G}$ and $\alpha$ a contact form.
The \textbf{contact Hofer length} of can be defined as
$$l_{\alpha}((\phi_t)_{t\in [0,1]})=\int\limits_0^1\max\limits_M\left\vert\alpha\left(\frac{d}{dt}\phi_t\right)\right\vert dt.$$
Define the \textbf{contact Hofer norm} of a contactomorphism $\phi\in\mathcal{G}$ as
$$|\phi|_{\alpha}:=\inf l_{\alpha}((\phi_t)_{t\in [0,1]}),$$
where the infimum is taken over all paths $\phi_t$ with $\phi_0=\mathrm{id}$ and $\phi_1=\phi$.

Similarly for a homotopy class of paths $\tilde{\phi}\in\Tilde{\mathcal{G}}$ define $|\tilde{\phi}|_{\alpha}$ as the infimum of the contact Hofer lengths of paths in the class $\tilde{\phi}$.

In \cite[Theorem A]{Egor} the following properties of $|\cdot|_{\alpha}$ are proved:

\begin{theorem}[\cite{Egor}]
The contact Hofer norm on $\mathcal{G}$ satisfies
\begin{itemize}
    \item[(i)] $|\phi|_{\alpha}=0$ iff $\phi=\mathrm{id}$.
    \item[(ii)] $|\phi\psi|_{\alpha}\leq |\phi|_{\alpha}+|\psi|_{\alpha}$
    \item[(iii)] $|\phi^{-1}|_{\alpha}=|\phi|_{\alpha}$
    \item[(iv)] $|\psi\phi\psi^{-1}|_{\alpha}=|\phi|_{\psi^{\ast}\alpha}$. 
\end{itemize}
On $\Tilde{\mathcal{G}}$ the function $|\cdot|_{\alpha}$ defines a pseudo norm satisfying properties (ii)-(iv).
\end{theorem}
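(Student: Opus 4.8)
The plan is to reduce the length functional to its contact-Hamiltonian form and then dispatch (ii)--(iv) by formal manipulations of generating vector fields, isolating the nondegeneracy in (i) as the single nonformal point.

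First I would reformulate the length. For a path $(\phi_t)$ starting at the identity, set $X_t=\frac{d}{dt}\phi_t\circ\phi_t^{-1}$ and $H_t=\alpha(X_t)$. Since $\frac{d}{dt}\phi_t=X_t\circ\phi_t$ and $\phi_t$ is a diffeomorphism, one has $\alpha\!\left(\frac{d}{dt}\phi_t\right)=H_t\circ\phi_t$, hence $\max_M\big|\alpha(\frac{d}{dt}\phi_t)\big|=\max_M|H_t|$ and $l_\alpha((\phi_t))=\int_0^1\max_M|H_t|\,dt$. The engine for the whole argument is right-invariance: the path $(\phi_t\psi)$ has generating vector field $\frac{d}{dt}(\phi_t\psi)\circ(\phi_t\psi)^{-1}=X_t$, the same as $(\phi_t)$, so $l_\alpha((\phi_t\psi))=l_\alpha((\phi_t))$.

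Properties (ii) and (iii) then follow formally. For (ii) I would concatenate (after a standard reparametrization) a path $(\psi_t)$ from $\mathrm{id}$ to $\psi$ with the path $(\phi_t\psi)$ from $\psi$ to $\phi\psi$; by right-invariance and additivity of length under concatenation its total length is $l_\alpha((\psi_t))+l_\alpha((\phi_t))$, and passing to infima gives $|\phi\psi|_\alpha\le|\phi|_\alpha+|\psi|_\alpha$. For (iii) I would use the path $\rho_t=\phi_{1-t}\phi^{-1}$, which runs from $\mathrm{id}$ to $\phi^{-1}$; a short computation gives that its generating vector field is $-X_{1-t}$, so its length equals $\int_0^1\max_M|{-H_{1-t}}|\,dt=l_\alpha((\phi_t))$ (the absolute value absorbing the sign), whence $|\phi^{-1}|_\alpha\le|\phi|_\alpha$ and, by symmetry, equality. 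For (iv) I would conjugate: the path $(\psi\phi_t\psi^{-1})$ from $\mathrm{id}$ to $\psi\phi\psi^{-1}$ is generated by $\psi_*X_t$, and since $\alpha(\psi_*X_t)=\big((\psi^*\alpha)(X_t)\big)\circ\psi^{-1}$ one gets $\max_M|\alpha(\psi_*X_t)|=\max_M|(\psi^*\alpha)(X_t)|$; as conjugation is a bijection between paths ending at $\phi$ and paths ending at $\psi\phi\psi^{-1}$, taking infima yields $|\psi\phi\psi^{-1}|_\alpha=|\phi|_{\psi^*\alpha}$. The statement on $\tilde{\mathcal{G}}$ is immediate, since concatenation, inversion and conjugation all respect homotopy classes rel endpoints, so (ii)--(iv) descend verbatim to the infimum taken over a fixed homotopy class.

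The main obstacle is the nondegeneracy in (i). The easy direction $|\mathrm{id}|_\alpha=0$ is clear, and finiteness of $|\phi|_\alpha$ uses that $M$ is closed. For the hard direction, suppose $\phi\ne\mathrm{id}$; then $\phi(p)\ne p$ for some $p$, so after shrinking there is a Darboux ball $B$ around $p$ with $\phi(B)\cap B=\emptyset$. I would then invoke an energy--capacity inequality for the contact Hofer norm, namely that the displacement energy $e(B)=\inf\{\,|\psi|_\alpha:\psi(B)\cap B=\emptyset\,\}$ is strictly positive; since $|\phi|_\alpha\ge e(B)>0$, nondegeneracy follows. Establishing $e(B)>0$ is the genuinely geometric input, and I expect it to require the hard analytic machinery of \cite{Egor} (a contact capacity via holomorphic curves or spectral invariants) rather than the formal steps above. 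This is also precisely the property that fails to descend to $\tilde{\mathcal{G}}$ -- a contractible positive loop may have zero length -- which is why one only obtains a pseudo-norm there.
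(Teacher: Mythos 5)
This theorem is stated in the paper as background and cited to \cite{Egor} without proof, so the comparison here is with the proof in that reference. Your formal arguments for (ii)--(iv) are correct and are essentially the standard ones: right-invariance of the length functional (your computation that $(\phi_t\psi)$ is generated by the same vector field $X_t$ is correct), concatenation for the triangle inequality, the right-translated time reversal $\rho_t=\phi_{1-t}\phi^{-1}$ generated by $-X_{1-t}$ for symmetry, and the identity $\alpha(\psi_*X_t)=\bigl((\psi^*\alpha)(X_t)\bigr)\circ\psi^{-1}$ together with the bijectivity of conjugation on paths for (iv). The descent to $\widetilde{\mathcal{G}}$ is also right, since all three operations respect homotopy classes rel endpoints (for (iii) one needs the standard fact that $t\mapsto\phi_{1-t}\phi_1^{-1}$ and the pointwise inverse path are homotopic rel endpoints, which holds in any topological group). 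For (i) you correctly isolate nondegeneracy as the sole nonformal point and reduce it to positivity of the displacement energy $e(B)$ of a small ball; this is in fact exactly the route taken in \cite{Egor}, where $e(B)>0$ is established by lifting the contact isotopy to a Hamiltonian isotopy of the symplectization (with Hamiltonian $e^\theta H_t$, suitably cut off) and invoking the symplectic energy--capacity inequality. Deferring that analytic input to the cited source is reasonable given that the theorem itself is quoted from there.

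One small correction to your closing remark: the reason only a pseudo-norm is claimed on $\widetilde{\mathcal{G}}$ is not that ``a contractible positive loop may have zero length.'' A contractible loop represents the identity element of $\widetilde{\mathcal{G}}$, where vanishing of the norm is consistent with nondegeneracy; moreover, for any class $\tilde{\phi}$ projecting to $\phi\neq\mathrm{id}$ one has $|\tilde{\phi}|_{\alpha}\geq|\phi|_{\alpha}>0$ by (i). The only possible source of degeneracy on $\widetilde{\mathcal{G}}$ is a \emph{non-contractible} loop class in $\pi_1(\mathcal{G})\subset\widetilde{\mathcal{G}}$ admitting representatives of arbitrarily small length. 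This does not affect your proof, since the statement only asserts properties (ii)--(iv) on $\widetilde{\mathcal{G}}$, but the heuristic as you phrased it is backwards.
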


Assuming that there are no positive loops in $\mathcal{G}$ the following can be proved, see \cite[Corollary 2.6]{Hedicke24}.

\begin{theo}[\cite{Hedicke24}]
Assume that $\mathcal{G}$ is orderable. 
Let $\phi_t^{\alpha}$ be the Reeb flow of some contact form $\alpha$ at time $t\in\R$.
Then 
$$|\phi_t^{\alpha}|_{\alpha}=|t|.$$
\end{theo}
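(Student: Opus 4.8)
The plan is to establish the two inequalities $|\phi_t^{\alpha}|_{\alpha}\leq |t|$ and $|\phi_t^{\alpha}|_{\alpha}\geq |t|$ separately. For the upper bound I would simply use the Reeb flow itself as a competitor path from $\mathrm{id}$ to $\phi_t^{\alpha}$: since $\alpha(\tfrac{d}{ds}\phi_s^{\alpha})=\alpha(R)\equiv 1$ for the Reeb field $R$, this path has contact Hofer length $l_\alpha=\int_0^{t}1\,ds=t$ for $t\geq 0$, whence $|\phi_t^{\alpha}|_{\alpha}\leq t$. The case $t=0$ is trivial, and the case $t<0$ reduces to the case $t>0$ by property (iii) of the contact Hofer norm, since $(\phi_t^{\alpha})^{-1}=\phi_{-t}^{\alpha}$ gives $|\phi_t^{\alpha}|_{\alpha}=|\phi_{-t}^{\alpha}|_{\alpha}$. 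Thus the whole content is the lower bound $|\phi_t^{\alpha}|_{\alpha}\geq t$ for $t>0$, which is where orderability of $\mathcal{G}$ must enter.

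For the lower bound I would argue by contradiction. Suppose that for some $t>0$ one has $|\phi_t^{\alpha}|_{\alpha}<t$. Then there is a path $(\psi_s)_{s\in[0,1]}$ in $\mathcal{G}$ with $\psi_0=\mathrm{id}$, $\psi_1=\phi_t^{\alpha}$, and length $\ell:=l_\alpha((\psi_s))<t$. If $\ell=0$ the path is constant, forcing $\phi_t^{\alpha}=\mathrm{id}$, in which case the Reeb flow $(\phi_s^{\alpha})_{s\in[0,t]}$ is itself a positive loop in $\mathcal{G}$ (its contact Hamiltonian is $\equiv 1>0$), contradicting orderability. Assuming $\ell>0$, I would reparametrize $\psi$ at constant speed so that $\max_M|\alpha(\tfrac{d}{ds}\psi_s)|\equiv\ell$ for all $s\in[0,1]$; this preserves the endpoints and the length, and crucially upgrades the integral bound into the pointwise bound $\max_M|\alpha(\tfrac{d}{ds}\psi_s)|=\ell<t$ for every $s$.

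The key step is then to form the loop $(c_s)_{s\in[0,1]}$ given by $c_s:=\psi_s^{-1}\circ\phi_{ts}^{\alpha}$. One checks $c_0=\mathrm{id}$ and $c_1=\psi_1^{-1}\phi_t^{\alpha}=(\phi_t^{\alpha})^{-1}\phi_t^{\alpha}=\mathrm{id}$, so $c$ is a loop in $\mathcal{G}$ based at the identity. A direct computation of $\alpha(\tfrac{d}{ds}c_s)$, using that the Reeb flow is strict (so $(\phi_{ts}^{\alpha})^{\ast}\alpha=\alpha$ and its contact Hamiltonian is the constant $t$), yields at a point $x$, writing $z=\phi_{ts}^{\alpha}(x)$, the identity $\alpha\big(\tfrac{d}{ds}c_s\big)(x)=e^{-g_s(z)}\big(t-H^{\psi}_s(z)\big)$, where $H^{\psi}_s$ is the contact Hamiltonian of $\psi$ and $e^{-g_s}>0$ is the conformal factor of $\psi_s^{-1}$. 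Since $H^{\psi}_s\leq\ell<t$ pointwise and $e^{-g_s}>0$, this expression is strictly positive, so $c$ is a positive loop in $\mathcal{G}$, contradicting orderability and establishing $|\phi_t^{\alpha}|_{\alpha}\geq t$.

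The main obstacle I anticipate is the correct treatment of the conformal factors of the non-strict contactomorphisms $\psi_s$. Had I instead formed the loop in the opposite order as $\phi_{ts}^{\alpha}\circ\psi_s^{-1}$, the resulting expression would have the conformal factor multiplying $\psi$'s Hamiltonian term in a way whose sign could defeat positivity; computing with the order $\psi_s^{-1}\circ\phi_{ts}^{\alpha}$ is what ensures the everywhere-positive conformal factor of $\psi_s^{-1}$ multiplies the strictly positive gap $t-H^{\psi}_s$, so that positivity is preserved automatically. The other essential ingredient is the constant-speed reparametrization: without it the length bound $\ell<t$ would not produce the pointwise domination $H^{\psi}_s<t$ that makes the loop strictly positive. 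With these two points handled, the argument is complete; the same reasoning, replacing positive loops by contractible positive loops and orderability of $\mathcal{G}$ by that of $\tilde{\mathcal{G}}$, yields the corresponding statement on the universal cover.
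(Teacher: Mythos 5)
Your proof is correct and takes essentially the same route as the paper: both derive the lower bound by contradiction, upgrading the integral length bound on a too-short path to a pointwise bound on its contact Hamiltonian and then combining it with the Reeb flow to manufacture a strictly positive loop, contradicting orderability of $\mathcal{G}$. The only cosmetic differences are that you use constant-speed reparametrization where the paper invokes Lemma \ref{lem_minimum} (equidistributing the minimum of the contact Hamiltonian along a homotopic path), and that you compose as $\psi_s^{-1}\circ\phi_{ts}^{\alpha}$ — incurring a harmless positive conformal factor, as you correctly track — whereas the paper's composition $\phi_s^{\alpha}\psi_s$, with the strict Reeb flow outermost acting on a short path representing the inverse (via property (iii) of the norm), avoids conformal factors altogether.
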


The result in particular implies the existence of a positive loop in $\mathcal{G}$ if the contact Hofer norm is bounded along the Reeb flow of some contact form.
This observation can be extended to $\tilde{\mathcal{G}}$ as follows.

\begin{theorem}\label{thm_hofer_orderable}
Let $\tilde{\phi}_t^{\alpha}$ be the lift of the Reeb flow at time $t$ of some contact form $\alpha$ to $\Tilde{\mathcal{G}}$.
If $\vert \tilde{\phi}_t^{\alpha}\vert_{\alpha}<|t|$ for some $t\in\mathbb{R}$ then $(M,\xi)$ is non-orderable, i.e., there exists a positive contractible loop in $\mathcal{G}$.
\end{theorem}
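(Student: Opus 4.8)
The plan is to manufacture the required positive contractible loop by explicitly combining the hypothesized short path with the Reeb flow. I first reduce to the case $t=T>0$: if $t=-T<0$, then $\tilde{\phi}_t^{\alpha}=(\tilde{\phi}_T^{\alpha})^{-1}$, so property (iii) gives $|\tilde{\phi}_T^{\alpha}|_{\alpha}=|\tilde{\phi}_{-T}^{\alpha}|_{\alpha}<T$, returning us to the positive case. Now, by definition of the pseudo-norm the hypothesis $|\tilde{\phi}_T^{\alpha}|_{\alpha}<T$ furnishes a path $(\psi_s)_{s\in[0,1]}$ in $\mathcal{G}$ with $\psi_0=\mathrm{id}$, representing the class $\tilde{\phi}_T^{\alpha}$ (so in particular $\psi_1=\phi_T^{\alpha}$ and $\psi$ is homotopic rel endpoints to the Reeb path $s\mapsto\phi_{sT}^{\alpha}$), and of contact Hofer length $l_{\alpha}(\psi)<T$. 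After a monotone reparametrization, which changes neither the endpoints, the class, nor the length, I may assume that $\max_M\bigl|\alpha(\tfrac{d}{ds}\psi_s)\bigr|<T$ for \emph{every} $s\in[0,1]$; this spreading-out is precisely where it matters that it is the total length, and not merely the pointwise speed, that lies below $T$.

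Next I would form the pointwise product loop $\rho_s:=\psi_s^{-1}\circ\phi_{sT}^{\alpha}$. This is a based loop, since $\rho_0=\mathrm{id}$ and $\rho_1=(\phi_T^{\alpha})^{-1}\phi_T^{\alpha}=\mathrm{id}$, and each $\rho_s\in\mathcal{G}$. It is contractible: the lift to $\tilde{\mathcal{G}}$ of a pointwise product of paths issuing from the identity is the product of the lifts, whence $[\rho]=[\psi]^{-1}\,[\phi_{\cdot T}^{\alpha}]=(\tilde{\phi}_T^{\alpha})^{-1}\tilde{\phi}_T^{\alpha}=\tilde{\mathrm{id}}$. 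It therefore only remains to verify that $\rho_s$ is a \emph{positive} path, which will exhibit the sought positive contractible loop and prove non-orderability.

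The heart of the argument is the computation of the contact Hamiltonian $H_s$ of $\rho_s$. Writing $G_s$ for the contact Hamiltonian of $\psi_s$ (so that $\max_M|G_s|=\max_M|\alpha(\tfrac{d}{ds}\psi_s)|<T$) and using crucially that the Reeb flow is strict for $\alpha$, the composition rule for contact Hamiltonians together with the pushforward identity $\alpha\bigl((\psi_s^{-1})_*R_\alpha\bigr)=e^{\tau_s}\circ\psi_s$ yields $H_s=e^{\tau_s}\,(T-G_s)\circ\psi_s$, where $e^{\tau_s}>0$ is the conformal factor of $\psi_s^{-1}$. The decisive feature is that this conformal factor, the one ingredient not controlled by the Hofer length, enters only as an overall positive multiple and so drops out of the sign; positivity of the loop thus reduces to the pointwise inequality $G_s<T$, which holds since $\max_M G_s\le\max_M|G_s|<T$. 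Hence $H_s>0$, i.e.\ $\rho$ is a (strictly) positive contractible loop, so $(M,\xi)$ is non-orderable.

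I expect the main obstacle to be the Hamiltonian bookkeeping in the third step. One must carefully track the two conformal factors produced by inverting $\psi_s$ and by the pushforward $(\psi_s^{-1})_*R_\alpha$, and check that, exactly because $\phi_t^{\alpha}$ preserves $\alpha$, they assemble into the single positive prefactor $e^{\tau_s}$ rather than contaminating the inequality $G_s<T$. Getting the signs and, equally, the ordering right is essential: placing the Reeb factor on the right of $\psi_s^{-1}$ produces the clean positive Hamiltonian above, whereas the opposite placement or the opposite sign would yield a \emph{negative} loop and fail.
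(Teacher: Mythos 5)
Your proof is correct, and it rests on the same mechanism as the paper's: multiply the Reeb flow against a short representative of its class and check positivity pointwise, using that the uncontrolled conformal factor can be made to drop out of the sign. Two implementation choices differ, and both are worth recording. First, the paper normalizes the short path via Lemma \ref{lem_minimum}, which makes $\min_M\alpha\bigl(\tfrac{d}{ds}\psi_s\bigr)$ nearly constant in $s$; since that function can change sign, a monotone reparametrization alone cannot achieve this, and the lemma instead twists the path by the Reeb flow. You avoid the lemma entirely by equidistributing $\max_M\bigl|\alpha\bigl(\tfrac{d}{ds}\psi_s\bigr)\bigr|$, which is nonnegative, so an elementary arc-length-type reparametrization (pad the speed $v(s)$ by $\epsilon$ with $l_\alpha(\psi)+\epsilon<T$ and normalize) does suffice — a genuine, if modest, simplification. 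Second, the orderings are mirror images: the paper forms $\phi_s^{\alpha}\psi_s$ with $\psi$ representing the inverse class, so that strictness of $\phi_s^{\alpha}$ is exactly what makes the $\alpha$-speeds add with no conformal factor, whereas you form $\rho_s=\psi_s^{-1}\phi_{sT}^{\alpha}$, whose velocity field is $(\psi_s^{-1})_*\bigl(TR_\alpha-Y_s\bigr)$ with $Y_s$ the velocity field of $\psi_s$, so the single conformal factor of $\psi_s^{-1}$ multiplies everything and positivity reduces to $G_s<T$. One small misattribution: in \emph{your} ordering strictness of the Reeb flow is not actually used — the identity $\alpha\bigl((\psi_s^{-1})_*R_\alpha\bigr)=e^{\tau_s}\circ\psi_s$ holds for an arbitrary contactomorphism simply because $\alpha(R_\alpha)=1$; strictness is what the paper's ordering needs, since pushing $Y_s$ forward by the strict map $\phi_s^{\alpha}$ is what keeps the speeds additive there. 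Your caution about the opposite placement is nonetheless apt: $\phi_{sT}^{\alpha}\psi_s^{-1}$ would pit the uncontrolled quantity $e^{\tau_s}G_s$ against the constant $T$ and the estimate would fail. The reduction to $t>0$ via property (iii), the contractibility of $\rho$ via the product of lifts in $\tilde{\mathcal{G}}$, and the final estimate $H_s=e^{\tau_s}\,(T-G_s)\circ\psi_s>0$ from $\max_M|G_s|<T$ are all sound; note also that the paper's contrapositive phrasing additionally records the equality $|\tilde{\phi}_t^{\alpha}|_{\alpha}=|t|$ for orderable manifolds, while your direct construction produces the positive contractible loop explicitly.
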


The key step to prove Theorem \ref{thm_hofer_orderable} is the following lemma adapted from \cite[Lemma 3.1]{Hedicke24}.
Note that the path constructed in the proof is homotopic to the original path, i.e., the Lemma naturally extends to $\Tilde{\mathcal{G}}$.

\begin{lemma}\label{lem_minimum}
For a path of contactomorphisms $(\phi_t)_{t\in[0,1]}$ set 
$$c := \int\limits_0^1\min\limits_M\alpha\left(\frac{d}{dt}\phi_t\right) dt.$$
For every $\delta>0$ there exists a path $(\psi_t)_{t\in[0,1]}\in \tilde{\phi}_t$ such that for all $t\in[0,1]$
$$\min\limits_M\alpha\left(\frac{d}{dt}\psi_t\right) \in (c-\delta,c+\delta).$$
\end{lemma}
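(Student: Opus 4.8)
The plan is to compose the given path with a time-dependent Reeb flow so as to shift the minimum of the contact Hamiltonian to the constant value $c$, and then to observe that this composition is homotopic to the original path relative to its endpoints.

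Write $m(t):=\min_M\alpha\left(\frac{d}{dt}\phi_t\right)$, a continuous function of $t$ with $\int_0^1 m(t)\,dt=c$ by definition of $c$. Let $\phi^\alpha_s$ denote the Reeb flow of $\alpha$ and $R$ its Reeb vector field, and for a smooth function $\beta\colon[0,1]\to\R$ with $\beta(0)=\beta(1)=0$ set
$$\psi_t:=\phi^\alpha_{\beta(t)}\circ\phi_t.$$
The central computation is the pointwise identity
$$\alpha\left(\frac{d}{dt}\psi_t\right)=\alpha\left(\frac{d}{dt}\phi_t\right)+\beta'(t)$$
on $M$. This follows by differentiating $\psi_t(x)=\phi^\alpha_{\beta(t)}(\phi_t(x))$ in $t$: the term coming from the variation of the Reeb time contributes $\beta'(t)\,\alpha(R)=\beta'(t)$, while the term coming from $\phi_t$ contributes $\left((\phi^\alpha_{\beta(t)})^{\ast}\alpha\right)\!\left(\frac{d}{dt}\phi_t\right)=\alpha\left(\frac{d}{dt}\phi_t\right)$, since the Reeb flow preserves $\alpha$. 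Taking the minimum over $M$ gives $\min_M\alpha\left(\frac{d}{dt}\psi_t\right)=m(t)+\beta'(t)$.

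Thus it suffices to choose $\beta$ so that $m(t)+\beta'(t)$ is close to $c$. If $m$ were smooth one could take $\beta'(t)=c-m(t)$ and obtain the value $c$ exactly, with $\beta(0)=\beta(1)=0$ automatic because $\int_0^1(c-m)\,dt=0$. The only obstacle is that $m$, being a pointwise minimum of a smooth family, is merely continuous, so this choice of $\psi_t$ would fail to be a smooth path. I will therefore approximate: given $\delta>0$, pick a smooth $g$ with $\|g-(c-m)\|_{C^0}<\delta/2$; since $\int_0^1(c-m)\,dt=0$, the constant $\int_0^1 g\,dt$ has absolute value $<\delta/2$, and replacing $g$ by $g-\int_0^1 g\,dt$ yields a smooth function, still within $\delta$ of $c-m$ in $C^0$, with zero mean. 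Setting $\beta(t):=\int_0^t g$ gives a smooth $\beta$ with $\beta(0)=\beta(1)=0$ and $\min_M\alpha\left(\frac{d}{dt}\psi_t\right)=m(t)+g(t)\in(c-\delta,c+\delta)$ for all $t$.

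It remains to verify that $(\psi_t)$ represents $\tilde\phi_t$. Clearly $\psi_0=\mathrm{id}$ and $\psi_1=\phi_1$, because $\beta$ vanishes at the endpoints. Moreover the family $\psi^r_t:=\phi^\alpha_{r\beta(t)}\circ\phi_t$, $r\in[0,1]$, is a homotopy rel endpoints from $\phi_t$ (at $r=0$) to $\psi_t$ (at $r=1$), since $\psi^r_0=\mathrm{id}$ and $\psi^r_1=\phi_1$ for all $r$. Hence $(\psi_t)$ lies in the same class $\tilde\phi_t\in\tilde{\mathcal G}$, as required. For closed $M$ the Reeb flow is complete and all maps involved are automatically compactly supported, so each $\psi_t$ indeed lies in $\mathcal G$. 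The main technical point is thus the smoothing of $m$ under the mean-zero constraint, which is exactly what forces the conclusion to be the open interval $(c-\delta,c+\delta)$ rather than the exact value $c$.
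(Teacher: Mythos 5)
Your proof is correct and follows essentially the same route as the paper's (which is the argument of \cite[Lemma 3.1]{Hedicke24}): compose with the time-dependent Reeb flow $\phi^{\alpha}_{\beta(t)}$, where $\beta$ is a smoothed, mean-zero primitive of $c-m(t)$, so that strictness of the Reeb flow shifts the pointwise Hamiltonian by exactly $\beta'(t)$, and the scaling $r\mapsto\phi^{\alpha}_{r\beta(t)}\circ\phi_t$ gives the homotopy rel endpoints that the paper emphasizes. Your handling of the two technical points --- smoothing the merely continuous $m$ while preserving $\int_0^1\beta'=0$, and noting the restriction to closed $M$ --- is exactly what is needed.
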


\begin{proof}[Proof of Theorem \ref{thm_hofer_orderable}]
We show that if $(M,\xi)$ is orderable, $|\tilde{\phi}_t^{\alpha}|_{\alpha}=|t|$ for any $t\in\R$.

From the definitions it is clear that $|\tilde{\phi}_t^{\alpha}|_{\alpha}\leq |t|$.
Assume that $t>0$ and $|\tilde{\phi}_t^{\alpha}|_{\alpha}< t$.
Then, as $|\tilde{\phi}_t^{\alpha}|_{\alpha} = |(\tilde{\phi}_t^{\alpha})^{-1}|_{\alpha}$ there exists a path $(\psi_s)_{s\in[0,t]}\in \tilde{\phi}_t^{\alpha}$ such that 
$$\int\limits_0^t\min\limits_M\alpha\left(\frac{d}{dt}\psi_s\right) ds=c, $$
for $|c|<1$.
In particular $1+c>0.$
Using Lemma \ref{lem_minimum} we can assume that $\min\limits_M\alpha\left(\frac{d}{ds}\psi_s\right)$ is arbitrarily close to $c$ for each $s\in [0,t]$.
Then $(\phi_{s}^{\alpha}\psi_s)_{s\in[0,t]}$ defines a contractible loop with
$$\min\limits_M \alpha\left(\frac{d}{ds}\left(\phi_{s}^{\alpha}\psi_s\right)\right)=1+\min\limits_M \alpha\left(\frac{d}{ds}\left(\psi_s\right)\right)\geq 1+c-\delta > 0,$$ as we can pick $\delta>0$ arbitrarily small.
It follows that $(M,\xi)$ is not orderable.
\end{proof}

In \cite{Rosen} Rosen and Zhang introduced a contact Hofer pseudo-metric on contact isotopy classes of subsets. 
In the case of an isotopy class of a closed Legendrian, this pseudo-metric is nondegenerate, see \cite{Dimitroglou24}.
Given the isotopy class $\mathcal{L}$ of a closed Legendrian and a contact form $\alpha$, the metric $d_{\alpha}$ between $\Lambda_0,\Lambda_1\in\mathcal{L}$ can be defined as the infimum over the contact Hofer lengths of paths $\phi_t$ such that $\phi_0=\mathrm{id}$ and $\phi_1(\Lambda_0)=\Lambda_1$.
Clearly, this pseudo-metric lifts to $\tilde{\mathcal{L}}$.
Moreover, using \cite[Lemma 5.1]{Hedicke24} one can prove an analogue of Theorem \ref{thm_hofer_orderable}.

Let $\mathcal{L}$ be the Legendrian isotopy class of a closed Legendrian $\Lambda$.
We consider the universal cover $\tilde{\mathcal{L}}$ as homotopy classes of Legendrian isotopies starting at $\Lambda$ and denote by $\tilde{\Lambda}$ the class of the trivial isotopy.

\begin{theorem}\label{thm_hofer_orderable_leg}
Let $\tilde{\Lambda}_t$ be the lift of the isotopy $(\phi^{\alpha}_{s}(\Lambda))_{s\in[0,t]}$ to $\tilde{\mathcal{L}}$ for some contact form $\alpha$ and fixed $t\in\R$.
If $$d_{\alpha}(\tilde{\Lambda},\tilde{\Lambda}_t)<|t|,$$
there exists a positive contractible loop in $\mathcal{L}$.
\end{theorem}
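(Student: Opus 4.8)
The plan is to mirror the proof of Theorem~\ref{thm_hofer_orderable}, replacing the group multiplication of contactomorphisms by the action of the Reeb flow on the isotopy class $\mathcal{L}$, and replacing Lemma~\ref{lem_minimum} by its Legendrian counterpart \cite[Lemma 5.1]{Hedicke24}. Assume $t>0$ (the case $t<0$ is symmetric, exchanging the forward and backward Reeb flow, and $t=0$ is vacuous). By the definition of $d_{\alpha}$ on $\tilde{\mathcal{L}}$, the hypothesis $d_{\alpha}(\tilde{\Lambda},\tilde{\Lambda}_t)<t$ provides a path $(\phi_s)_{s\in[0,t]}$ in $\mathcal{G}$ with $\phi_0=\id$ whose Legendrian isotopy $\sigma_s:=\phi_s(\Lambda)$ lies in the class $\tilde{\Lambda}_t$ and has contact Hofer length $<t$. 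Since the maximum of $|\alpha(\tfrac{d}{ds}\phi_s)|$ over $M$ dominates its maximum over $\Lambda$, the normal velocity $v_s(p):=\alpha(\tfrac{d}{ds}\sigma_s(p))$ satisfies $\int_0^t\max_{\Lambda}|v_s|\,ds<t$.

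First I would turn $\sigma$, which realizes the \emph{forward} Reeb class $\tilde{\Lambda}_t$, into a short isotopy realizing the \emph{backward} class. Set $\mu_s:=\phi^{\alpha}_{-t}(\sigma_{t-s})$ for $s\in[0,t]$. Then $\mu_0=\phi^{\alpha}_{-t}(\phi^{\alpha}_t(\Lambda))=\Lambda$ and $\mu_t=\phi^{\alpha}_{-t}(\Lambda)$, and since $\sigma$ is homotopic rel endpoints to the forward Reeb isotopy $\phi^{\alpha}_s(\Lambda)$, applying the reparametrised strict contactomorphism $\phi^{\alpha}_{-t}$ shows that $\mu$ is homotopic rel endpoints to $\phi^{\alpha}_{-t}(\phi^{\alpha}_{t-s}(\Lambda))=\phi^{\alpha}_{-s}(\Lambda)$; that is, $[\mu]$ is the backward Reeb class $\tilde{\Lambda}_{-t}$. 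Because $\phi^{\alpha}_{-t}$ preserves $\alpha$, the normal velocity of $\mu_s$ equals $-v_{t-s}$ up to the identification of points, so $\mu$ again has Legendrian Hofer length $<t$; in particular $c:=\int_0^t\min_{\Lambda}\alpha(\tfrac{d}{ds}\mu_s)\,ds>-t$. I would then apply the Legendrian analogue of Lemma~\ref{lem_minimum}, namely \cite[Lemma 5.1]{Hedicke24}, to replace $\mu$ by a homotopic (rel endpoints) isotopy, still denoted $\mu$ and realised by an ambient path $(\psi_s)_{s\in[0,t]}$ with $\psi_0=\id$ and $\psi_s(\Lambda)=\mu_s$, for which $\min_{\Lambda}\alpha(\tfrac{d}{ds}\mu_s)$ lies, for every $s$, within $\delta$ of the average $\bar c:=c/t>-1$.

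Finally, I would form the loop of Legendrians $\Gamma_s:=\phi^{\alpha}_s(\psi_s(\Lambda))=\phi^{\alpha}_s(\mu_s)$, $s\in[0,t]$. It is based at $\Lambda$, since $\Gamma_0=\Lambda$ and $\Gamma_t=\phi^{\alpha}_t(\phi^{\alpha}_{-t}(\Lambda))=\Lambda$ (the endpoints of $\mu$ being fixed under the homotopy). Its normal velocity is computed exactly as in the proof of Theorem~\ref{thm_hofer_orderable}: using $(\phi^{\alpha}_s)^{*}\alpha=\alpha$,
$$\alpha\!\left(\tfrac{d}{ds}\phi^{\alpha}_s(\psi_s(p))\right)=1+\alpha\!\left(\tfrac{d}{ds}\psi_s\right)\!\Big|_{\psi_s(p)}=1+\alpha\!\left(\tfrac{d}{ds}\mu_s\right)\ge 1+\bar c-\delta>0$$
for $\delta$ small enough, so $\Gamma$ is a positive loop. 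For contractibility I would invoke the standard homotopy for the action of a topological group: $[\,s\mapsto\phi^{\alpha}_s\psi_s(\Lambda)\,]$ equals the concatenation of $[\,s\mapsto\psi_s(\Lambda)\,]=\tilde{\Lambda}_{-t}$ (the backward Reeb arc $\phi^{\alpha}_{-s}(\Lambda)$) with $[\,s\mapsto\phi^{\alpha}_s(\mu_t)\,]=[\,s\mapsto\phi^{\alpha}_{s-t}(\Lambda)\,]$ (the forward Reeb arc from time $-t$ to $0$). These two arcs are mutually reverse, so $\Gamma$ is null-homotopic in $\mathcal{L}$, yielding a positive contractible loop.

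The routine parts are the velocity identity (immediate from the $\alpha$-invariance of the Reeb flow) and the length estimate. The main obstacle is the homotopy-theoretic bookkeeping: verifying that the reversal--translation $\mu_s=\phi^{\alpha}_{-t}(\sigma_{t-s})$ genuinely lands in the backward class $\tilde{\Lambda}_{-t}$, and justifying the topological-group homotopy that splits $[\Gamma]$ into the two mutually reverse Reeb arcs. Both require care because $\tilde{\mathcal{L}}$ is only a set acted on by $\tilde{\mathcal{G}}$ rather than a group, so the cancellation of classes must be argued through explicit isotopies rather than formal inverses.
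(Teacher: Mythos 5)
Your proof is correct and follows exactly the route the paper intends: it mirrors the proof of Theorem \ref{thm_hofer_orderable} with \cite[Lemma 5.1]{Hedicke24} in place of Lemma \ref{lem_minimum}, which is precisely how the paper (without writing out details) asserts Theorem \ref{thm_hofer_orderable_leg} is obtained. Your reversal trick $\mu_s=\phi^{\alpha}_{-t}(\sigma_{t-s})$ and the square-map homotopy splitting $\Gamma$ into mutually reverse Reeb arcs are the correct Legendrian substitutes for the group-theoretic steps ($|\tilde{\phi}|_{\alpha}=|\tilde{\phi}^{-1}|_{\alpha}$ and cancellation in $\tilde{\mathcal{G}}$) that the paper leaves implicit.
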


\subsection{Liouville and Weinstein manifolds}

An important class of contact manifolds are those which admit a (strong) symplectic filling by a Weinstein, or more generally by a Liouville manifold.
In our conventions we mostly follow \cite{Cieliebak12, Eliashberg06}.

Let $(W,\lambda)$ be an exact symplectic manifold, i.e., $\lambda$ is a $1$-form such that $\omega:=d\lambda$ is a symplectic form.
Since $\omega$ is non-degenerate, there exists a unique vector field $Z$, called the \textbf{Liouville vector field}, such that $\omega(Z,\cdot)=\lambda$.

We call an exact symplectic manifold $(W,\lambda)$ a  \textbf{Liouville manifold} if
\begin{itemize}
    \item[(i)] The flow $f_t^Z$ of the Liouville vector field $Z$ is complete,
    \item[(ii)] There exists an open subset $U$ with compact closure and smooth boundary such that $W=U\sqcup \bigcup_{t\geq 0}f_t^Z(\partial U)$.
\end{itemize}

One can define the \textbf{skeleton} of the Liouville manifold as
$$\mathrm{Skel}(W,\lambda):=\bigcap\limits_{t<0} f_t^Z\left(\overline{U}\right).$$
Often we will just denote the skeleton of a Liouville manifold by the letter $L$.
Note that the skeleton does not depend on the choice of the open subset $U$.

The Liouville form $\lambda$ naturally descends to a contact form $\alpha$ on $M\cong\partial U$.
Even though $\alpha$ depends on the choice of $U$, the contact structure $\xi=\ker\alpha$ is independent of that choice.
In other words, the Liouville vector field induces a free $\R$-action on $W\setminus\mathrm{Skel}(W,\lambda)$ whose quotient inherits a contact structure from $\ker\lambda$ that is contactomorphic to $(M,\xi)$.
In particular $W\setminus\mathrm{Skel}(W,\lambda)$ is symplectomorphic to the symplectization of $(M,\xi)$.
For this reason we refer to $(M,\xi)$ as the \textbf{ideal contact boundary} of $(W,\lambda)$.

A Liouville manifold $(W,\lambda)$ is called a \textbf{Weinstein manifold} if there exists an exhausting Morse function $F\colon W\rightarrow\R$, i.e., $F$ is bounded from below and proper, which is gradient-like for $Z$.
By this we mean that for some $\delta>0$ and an auxiliary Riemannian metric 
$$dF(Z)\geq \delta\left(\vert Z\vert^2+\vert dF\vert^2\right).$$
While a general $2n$-dimensional Liouville manifold can have for example a disconnected boundary or a skeleton of dimension greater than $n$ \cite{McDuff91}, the boundary of a Weinstein manifold is connected and its skeleton a union of isotropic submanifolds of dimension at most $n$, which are the stable submanifolds of the critical points of the Liouville vector field, see e.g. \cite{Cieliebak12, Starkston18}.

We always assume that a Weinstein manifold $(W,\lambda)$ is of \textbf{finite type}, i.e., that the Morse function $F$ has finitely many critical points.
This ensures that the skeleton is the union of finitely many isotropic submanifolds.
In particular $L$ is an \textbf{isotropic complex}, i.e., it admits a filtration
$$L^{0}\subset \cdots\subset L^{n}=L,$$
where $L^{i}\setminus L^{i-1}$ is an isotropic submanifold of dimension $i$.

We denote by $L^{\mathrm{crit}}=L\setminus L^{n-1}$ the critical part of the skeleton.

\subsection{Open book decompositions and contact structures}\label{subsecOB}

Open book decompositions are an important tool in contact topology that allows to decompose a given contact manifold into a codimension $2$ contact submanifold and a family of codimension $1$ Liouville domains.
In the following we review the most important facts open book decompositions and contact structures.
For further details see e.g. \cite{Giroux00, Giroux02, Giroux20, Courte18, Geiges}.

An \textbf{open book decomposition} $(K,\theta)$ of a closed manifold $M$ consists of a codimension $2$ submanifold $K$, called the binding, and a smooth, locally trivial fibration $\theta\colon M\setminus K\rightarrow S^1$.
Additionally it is required that $K$ has a trivial tubular neighbourhood diffeomorphic to $K\times D^2$ such that in that neighbourhood $\theta$ corresponds to the angular coordinate on the disc $D^2$.
The fibres $\theta^{-1}(x)$ are called pages of the open book decomposition.
Note that if $M$ is oriented, an orientation for the pages can be chosen by requiring that the orientation of the page together with the orientation induced by $\theta$ via the standard orientation of $S^1$ induces the given orientation on $M$.
Since $K=\partial \theta^{-1}(x)$ for all $x\in S^1$, this induces an orientation on $K$.

As first noticed in the work of Giroux \cite{Giroux00,Giroux02, Giroux20}, the notion of an open book is closely related to contact structures.
Let $(M,\xi)$ be an oriented contact manifold with open book decomposition $(K,\theta)$.
Given a contact form $\alpha$ inducing the orientation of $M$, we say that $(M,\xi=\ker\alpha)$ is \textbf{supported} by the open book decomposition $(K,\theta)$ if
\begin{itemize}
    \item $d\alpha$ defines a symplectic form on each page inducing the given orientation.
    \item $\alpha$ defines a contact form on $K$ inducing the given orientation.
\end{itemize}

Note that the pages inherit the structure of a Liouville domain from the contact form $\alpha$.

We call two open books $(K,\theta), (\widehat{K},\widehat{\theta})$ supporting $(M,\xi)$ \textbf{equivalent} if there exists a contactomorphism $\phi \in \mathrm{Cont}(M,\xi)$ preserving the co-orientation of $\xi$ such that $\widehat{K} = \phi(K)$ and $\widehat{\theta} = \theta \circ \phi^{-1}.$

According to a very important result of Giroux \cite{Giroux02} (see also \cite{Colin08, Ibort00}; a new proof has recently appeared in the literature \cite{Honda24} based on \cite{Honda19, Eliashberg22}), any contact structure in dimension $3$ or higher is supported by an open book decomposition whose pages are Weinstein domains (see \cite{Cieliebak12} for details about this notion).

\begin{theorem}[\cite{Giroux02, Honda24}]
Let $(M,\xi)$ be a contact manifold of dimension at least $3$.
Then $(M,\xi)$ is supported by an open book decomposition with Weinstein pages.
\end{theorem}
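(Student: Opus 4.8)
The plan is to produce the supporting open book by approximately holomorphic geometry in the spirit of Donaldson, following the contact adaptation of Ibort--Mart\'inez-Torres--Presas and Giroux--Mohsen. Write $\xi=\ker\alpha$ and fix a compatible almost complex structure $J$ on $\xi$, so that $g:=d\alpha(\cdot,J\cdot)$ is a bundle metric on $\xi$, extended to $M$ using the Reeb direction. Since $d\alpha$ is exact, the trivial line bundle $M\times\C$ carries a Hermitian connection $\nabla=d-i\alpha$ of curvature $-i\,d\alpha$, and I would work with its $k$-th powers, i.e.\ sections $s\colon M\to\C$ equipped with the twisted operators $\nabla_k=d-ik\alpha$ and the associated CR operator $\bar\partial_k$. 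As $k\to\infty$ the natural scale $1/\sqrt k$ shrinks, and ``asymptotically holomorphic'' sections (those with $|\bar\partial_k s|$ small relative to $|\nabla_k s|$ at this scale) behave like genuinely holomorphic sections over larger and larger regions.

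First I would invoke the quantitative transversality theorem of Donaldson, in the CR/contact form due to Ibort--Mart\'inez-Torres--Presas: for all $k$ large there exists an asymptotically holomorphic section $s_k$ that is uniformly transverse to $0$, and moreover whose normalized phase $\theta_k:=s_k/|s_k|\colon M\setminus K_k\to S^1$, with $K_k:=s_k^{-1}(0)$, is itself uniformly transverse, in the sense that the $\xi$-component of $d\theta_k$ is surjective away from $K_k$. Establishing this second, refined transversality simultaneously with the first---so that $\theta_k$ is a genuine submersion, hence a locally trivial fibration on $M\setminus K_k$ by the standard model near $K_k$---is the technical heart of the argument: it requires Donaldson's globalization of local perturbations applied first to the one-jet of $s_k$ and then to the higher derivative controlling $d\theta_k$, with all estimates uniform in $k$. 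This is the step I expect to be the main obstacle.

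Granting uniform transversality, the remaining steps are local model computations powered by approximate holomorphicity. Near $K_k$ the section $s_k$ looks, in a Darboux-type chart adapted to $J$, like the last coordinate of $\C^n\times\C$, so $K_k$ is a codimension-two submanifold along which $d\alpha|_\xi$ stays nondegenerate; thus $K_k$ is a contact submanifold (the binding), and it has a standard tubular neighbourhood $K_k\times D^2$ on which $\theta_k$ is the angular coordinate, exactly as required in the definition of an open book. On the complement, each page $\Sigma_\vartheta=\theta_k^{-1}(\vartheta)$ is, up to an $O(1/\sqrt k)$ error, a $J$-complex hypersurface, so $d\alpha|_{\Sigma_\vartheta}$ is symplectic and $\alpha|_{\Sigma_\vartheta}$ is a Liouville form with boundary $K_k$. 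This is precisely the supporting condition for $\alpha$.

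Finally, to see that the pages are Weinstein and not merely Liouville, I would upgrade the Liouville structure using the ambient complex geometry: each page $(\Sigma_\vartheta,d\alpha|_{\Sigma_\vartheta})$ carries the almost complex structure induced by $J$ together with an exhausting $J$-convex Morse function furnished by a Morsification of the approximately holomorphic data (its critical points are the Lefschetz-type singularities of the fibration), whose gradient-like field is the Liouville field $Z$ defined by $\iota_Z\,d\alpha=\alpha$ on the page. This is exactly a Weinstein structure on $\Sigma_\vartheta$ with boundary the binding (cf.\ \cite{Cieliebak12}). Hence $(K_k,\theta_k)$ is a supporting open book with Weinstein pages, which proves the theorem. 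I note that an entirely different route---convex surface theory in dimension three, and its higher-dimensional generalization via convex hypersurfaces (Honda--Huang, building on the work of Eliashberg and Honda)---yields the same conclusion and underlies the proof cited as \cite{Honda24}.
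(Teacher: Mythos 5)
The paper does not prove this statement: it is quoted as background, with the proof delegated to \cite{Giroux02} (the Giroux--Mohsen approximately holomorphic program, announced in Giroux's ICM address) and to the recent complete proof of Breen--Honda--Huang \cite{Honda24} via convex hypersurface theory. Your outline follows the first of these two routes, and its skeleton is the correct one: the connection $d-ik\alpha$ on the trivial bundle, asymptotically holomorphic sections $s_k$ at scale $1/\sqrt{k}$, uniform transversality of $s_k$ to $0$ giving a contact binding $K_k$ as in \cite{Ibort00}, the phase $\theta_k=s_k/|s_k|$ as the fibration, and approximate $J$-invariance of the pages giving the supporting condition. You also correctly identify that the simultaneous quantitative transversality needed to make $\theta_k$ a submersion on $M\setminus K_k$ is the technical heart, and you do not carry it out; that is an acknowledged but genuine gap.

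There is, however, a second and less visible gap, and one concrete error. The error: there are no ``Lefschetz-type singularities of the fibration'' in this construction. For an open book, $\theta_k$ must be a locally trivial fibration on all of $M\setminus K_k$, i.e.\ a map with \emph{no} critical points; Lefschetz singularities belong to Donaldson's pencil picture in the symplectic setting, not here. The critical points of the Morse function exhibiting a page as Weinstein live inside a single page and are invisible to $\theta_k$, so ``Morsification of the approximately holomorphic data'' does not produce them. The deeper gap is precisely the upgrade from Liouville to Weinstein pages: once $d\alpha$ is nondegenerate on a page, the page is automatically an exact (Liouville-type) domain, but Weinstein requires exhibiting an exhausting Morse function for which the induced Liouville field $Z$ (with $\iota_Z d\alpha=\alpha|_{\mathrm{page}}$) is gradient-like, and with only $O(1/\sqrt{k})$ approximate holomorphicity this is a delicate quantitative statement, not a formal consequence of $J$-convexity of the ambient data (cf.\ \cite{Cieliebak12} for how rigid the Weinstein condition is). This step is exactly the part of the Giroux--Mohsen program whose details never appeared in print, and it is the reason the paper also cites \cite{Honda24}, whose convex-hypersurface argument is an entirely different mechanism. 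So your proposal is a faithful sketch of the announced strategy, but at the two places where the theorem is actually hard --- the refined transversality for $\theta_k$ and the Weinstein structure on the pages --- it asserts rather than proves, and in the second place the asserted mechanism is not the right one.
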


The notion of ideal Giroux form \cite{Giroux20, Courte18} allows to consider the pages of a supporting open book in a more uniform setting.

An \textbf{ideal Giroux form} is a contact form $\beta$ on $M\setminus K$ with the following properties:
\begin{itemize}
    \item $d\theta(R_{\beta})>0$
    \item There exists a contact form $\eta$ on $K$ such that on a tubular neighbourhood of $K$ diffeomorphic to $K\times D^2$ one has 
    $$\beta=d\theta +\frac{\eta}{r^2}.$$
    Here $\eta$ is a contact form on $K$, $r$ denotes the radial coordinate on $D^2$ and the neighbourhood is chosen such that the fibration $\theta$ coincides with the angular coordinate on $D^2$.
\end{itemize}

The ideal Giroux form $\beta$ induces on each page the structure of a Liouville manifold as in \cite[Section 1.5]{Eliashberg06}.
Moreover its Reeb flow induces exact symplectomorphisms between the pages.
Up to a small deformation near the binding, any open book decomposition admits an ideal Giroux form, see \cite[Lemma 3.9]{Courte18}.

Note that due to its behaviour near the binding, an ideal Giroux form $\beta$ does not extend to a contact form on all of $M$.
However, the contact structure induced by $\beta$ coincides with the contact structure supported by $(K,\theta)$ and therefore extends over the binding.

\begin{lemma}[\cite{Courte18}]\label{lem_ob_pages}
Let $(K,\theta)$ be an open book supporting some contact structure $(M,\xi)$ and $\beta$ be an ideal Giroux form.
Let $(W,\lambda)$ be exact symplectomorphic to the pages.
For any $c\in S^1$ and any open interval $I$ containing $c$ there exists $\epsilon>0$ and an embedding $i\colon W\times (-\epsilon,\epsilon)\rightarrow \theta^{-1}(I)$ such that $i^{\ast}\beta=dt+\lambda$ and $i^{\ast}\theta=t+c$ ($\mod 2\pi$).
\end{lemma}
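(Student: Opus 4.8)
The plan is to construct $i$ as a flow box for the Reeb vector field $R_\beta$ of the ideal Giroux form, anchored on the page $W_c := \theta^{-1}(c)$. There are two conditions to verify: $i^{\ast}\beta = dt + \lambda$, which I expect to follow formally from the invariance of $\beta$ under its own Reeb flow, and $i^{\ast}\theta = t + c$, which amounts to the normalization $d\theta(R_\beta)\equiv 1$ and is where the real content lies.

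First I would record the Liouville structure that $\beta$ induces on a page. Since $d\theta(R_\beta)>0$, the field $R_\beta$ is transverse to $W_c$, so $T W_c$ is a complement to the kernel line $\langle R_\beta\rangle$ of $d\beta$; hence $\omega_c := d\beta|_{W_c}$ is nondegenerate and $\lambda_c := \beta|_{W_c}$ is a Liouville form with $d\lambda_c = \omega_c$. By hypothesis $(W,\lambda)$ is exact symplectomorphic to the pages, and I would fix a strict identification $\Phi_c\colon (W,\lambda)\to (W_c,\lambda_c)$ with $\Phi_c^{\ast}\lambda_c = \lambda$. This is legitimate because the Reeb flow already provides strict exact symplectomorphisms between pages (see the last step), so the pages carry a canonical strict Liouville structure to which $(W,\lambda)$ is identified.

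Let $\psi_s$ denote the flow of $R_\beta$. Because $R_\beta$ is transverse to $W_c$, for small $\epsilon$ the map $j\colon W_c\times(-\epsilon,\epsilon)\to\theta^{-1}(I)$, $j(p,s)=\psi_s(p)$, is an embedding. To compute $j^{\ast}\beta$ I would use that $\psi_s^{\ast}\beta=\beta$ (as $\beta(R_\beta)=1$ and $\iota_{R_\beta}d\beta=0$): on a page-direction $w\in T_pW_c$ this gives $(j^{\ast}\beta)(w)=\beta_p(w)=\lambda_c(w)$, while $(j^{\ast}\beta)(\partial_s)=\beta(R_\beta)=1$, so $j^{\ast}\beta = ds + \lambda_c$. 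Setting $i := j\circ(\Phi_c\times\id)$ and renaming $s=t$ then yields $i^{\ast}\beta = dt + \Phi_c^{\ast}\lambda_c = dt + \lambda$.

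It remains to check $i^{\ast}\theta = t+c$, i.e. $\theta(\psi_s(p))=s+c$ for $p\in W_c$, which holds precisely when $d\theta(R_\beta)\equiv 1$ along these trajectories; this is the main obstacle. Here I would invoke the structural property recorded before the lemma, that the Reeb flow of $\beta$ maps pages to pages, so that $\theta\circ\psi_s$ is constant on each page and, differentiating in $s$, $d\theta(R_\beta)$ is constant along each page, hence a function $h(\theta)$ of $\theta$ alone. To pin down $h\equiv 1$ I would pass to the binding: in the model $\beta = d\theta + \eta/r^2$ on $K\times D^2$ a direct check (using that $\eta$ is pulled back from $K$, so $\iota_{\partial_\theta}(dr\wedge\eta)=0$ and $\iota_{\partial_\theta}d\eta=0$) shows $R_\beta = \partial_\theta$, whence $d\theta(R_\beta)=1$ on the whole punctured neighbourhood. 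Since every page $\theta^{-1}(c)$ meets this neighbourhood and $d\theta(R_\beta)$ is constant on it, $h(c)=1$ for all $c$, so $d\theta(R_\beta)\equiv 1$ and $\theta(\psi_s(p))=s+c$. The same invariance $\psi_s^{\ast}\beta=\beta$ together with page-preservation gives $\psi_s^{\ast}\lambda_{c+s}=\lambda_c$, confirming that the page identifications are strict and justifying the choice in the second step. The identity $i^{\ast}\beta=dt+\lambda$, by contrast, is essentially automatic from Reeb invariance of $\beta$, so the entire difficulty is concentrated in matching the flow-box parameter with the angular coordinate on the nose.
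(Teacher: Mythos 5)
The paper does not actually prove this lemma --- it is quoted from \cite{Courte18} --- so your attempt can only be measured against the standard argument behind that citation, and it does reproduce it: the collar is the flow box of $R_\beta$ anchored at the page $\theta^{-1}(c)$, the identity $j^{\ast}\beta = ds + \beta|_{\theta^{-1}(c)}$ follows from $\psi_s^{\ast}\beta=\beta$, and you correctly isolate the normalization $d\theta(R_\beta)\equiv 1$ as the entire content. Your computation $R_\beta=\partial_\theta$ in the model $d\theta+\eta/r^2$, and the propagation argument (constant on each page, equal to $1$ on the binding chart, every page meets that chart) are all correct; the only cosmetic omission is a word on why $\psi_s$ is defined for a uniform time $\epsilon$ on the noncompact page (near $K$ the flow is the $\partial_\theta$-rotation, and the rest is compact).

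Two caveats. First, the pivotal input --- that the Reeb flow permutes the pages, so that $d\theta(R_\beta)$ descends to a function $h(\theta)$ --- you import from the sentence preceding the lemma. That is fair within this paper's narrative, but note that this property does \emph{not} follow from the two axioms of an ideal Giroux form as listed here: on a mapping-torus piece one can take $\beta = d\theta + \lambda + \chi(\theta)\,dg$ with $g$ compactly supported in the interior of the page and $\chi'$ small; this satisfies $d\theta(R_\beta)>0$ and the binding normal form, yet $d\theta(R_\beta) = \bigl(1-\chi'(\theta)\,\lambda_\theta(X_g)\bigr)^{-1}$ is nonconstant on pages. So the page-permutation property carries essentially the same content as the lemma itself (your proof shows the two are equivalent given the binding model), and in \cite{Courte18} it comes from the finer structure of ideal Liouville domains; your argument is thus a correct reduction to that background assertion rather than a self-contained derivation from the definition as stated. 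Second, your ``strict identification'' $\Phi_c^{\ast}\lambda_c=\lambda$: the justification you offer (the Reeb flow gives strict symplectomorphisms \emph{between pages}) does not address the identification of the abstract $(W,\lambda)$ with a single page. If ``exact symplectomorphic'' only yields $\Phi^{\ast}\lambda_c = \lambda + dg$, the conclusion would read $i^{\ast}\beta = dt+\lambda+dg$; one should either strictify by a routine Moser-type correction along the complete Liouville flow (possible for finite-type pages with $g$ bounded), or read the hypothesis in the strict sense implicit in the ideal Liouville framework of \cite{Courte18}. Neither point invalidates your proof, but both deserve to be stated explicitly.
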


The following key Lemma is inspired by \cite[Proposition 3.5]{Courte18}.
For completeness, we provide a proof here.

\begin{lemma}\label{lem_ob_contraction}
Let $(K,\theta)$ be an open book decomposition supporting a contact structure $(M,\xi)$ and let $\beta$ be an ideal Giroux form.
Let $W_{\pm}$ be two different pages of the open book and $L_{\pm}$ be the skeletons of $W_{\pm}$ with respect to the Liouville forms induced by $\beta$.
Then there exists a contact vector field $X$ whose flow $\phi_t$ has the following property:

For any two open neighbourhoods $U_{\pm}$ of $L_{\pm}$ there exists a $T>0$ such that
$$\phi_T(M\setminus U_-)\subset U_+.$$
\end{lemma}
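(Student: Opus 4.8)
The plan is to construct a single contact vector field $X$ on $(M,\xi)$ whose flow $\phi_t$ has $L_+$ as an attractor and $L_-$ as a repeller, with the basin of attraction of $L_+$ equal to $M\setminus L_-$. Granting this dynamical picture, the conclusion is automatic: for neighbourhoods $U_\pm$ of $L_\pm$ the set $M\setminus U_-$ is compact and contained in the open basin of $L_+$, so by the standard uniform attraction of an attractor on compact subsets of its basin there is a $T>0$ with $\phi_T(M\setminus U_-)\subset U_+$.

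To build $X$ I work with the ideal Giroux form $\beta$ and the normal form $\beta=d\theta+\lambda$ of Lemma \ref{lem_ob_pages} on the complement of the binding, where $\lambda$ is the page Liouville form with Liouville field $Z$ (so that $R_\beta=\partial_\theta$). Writing $W_\pm$ as the pages over $c_\pm\in S^1$, a direct computation shows that a $\theta$-only contact Hamiltonian $H_0=H_0(\theta)$ generates
$$X_{H_0}=H_0(\theta)\,\partial_\theta+H_0'(\theta)\,Z.$$
I choose $H_0$ with exactly two nondegenerate zeros: a source at $c_-$ ($H_0'(c_-)>0$) and a sink at $c_+$ ($H_0'(c_+)<0$), with $H_0$ positive on the arc from $c_-$ to $c_+$ and negative on the other. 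On the union of all page skeleta (where $Z$ vanishes) the flow is simply the $\theta$-flow of $H_0$, which sweeps every page skeleton except $L_-$ to $L_+$, and on the sink page it restricts to $H_0'(c_+)\,Z$, i.e. the negative Liouville flow contracting $W_+$ onto $L_+$. The only defect of $X_{H_0}$ is that the source page $W_-\setminus L_-$ is frozen in $\theta$ and flows outward along $+Z$ toward the binding.

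To repair this I add a small correction $H_1=-\chi(\theta)\rho^2$, where $\rho$ is a radial coordinate on the page with $Z\cdot\rho=\rho$ and $\chi\ge 0$ satisfies $\chi(c_-)>0$. Using linearity of $H\mapsto X_H$ together with the general formula for a contact field in the model $\beta=d\theta+\lambda$, one finds that the combined field satisfies $\dot\theta=H_0(\theta)+\chi(\theta)\rho^2$, so at $\theta=c_-$ one gets $\dot\theta\approx\chi(c_-)\rho^2>0$ while the transverse direction still expands, $\frac{d}{dt}(\rho^2)\approx 2H_0'(c_-)\rho^2>0$. Thus points just off $L_-$ on $W_-$ now drift forward in $\theta$, and once past the source they are carried by $H_0$ toward $c_+$, where the negative-Liouville behaviour contracts them onto $L_+$. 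For $\chi$ small this is a perturbation of the $X_{H_0}$-dynamics away from $W_-$, so $L_-$ becomes a genuine repeller whose basin is reduced to $L_-$ itself and $L_+$ an attractor with basin $M\setminus L_-$. It is essential that the correction be added to $H_0$ rather than used alone: a pure product Hamiltonian $-\chi(\theta)\rho^2$ possesses a conserved quantity of the form $\rho^2\chi(\theta)^2$ and hence only periodic orbits, so no attraction.

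Finally I must check that $X=X_{H_0+H_1}$ extends to a smooth (hence, on the closed manifold $M$, complete) contact vector field across the binding $K$, using the near-binding model $\beta=d\theta+\eta/r^2$ of the ideal Giroux form, and that no orbit escapes into $K$; this is most cleanly phrased in terms of the genuine global contact form $\alpha$ and its Hamiltonian $\alpha(X)$. I expect the main obstacle to be precisely this interface between the global dynamics and the binding: one must assemble the radial term $H_0'(\theta)Z$ and the quadratic correction into a bounded field in the $(r,\theta)$-coordinates, rule out orbits running off to $K$ so that the basin is really all of $M\setminus L_-$, and—relatedly—obtain the attraction \emph{uniformly} on the compact set $M\setminus U_-$ so that a single time $T$ suffices. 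This is the step realized by the contracting-isotopy technique of \cite[Proposition 3.5]{Courte18}, and where the careful choice of $\chi$ is used.
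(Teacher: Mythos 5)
Your first half reproduces the paper's construction exactly: away from the binding the paper's field \emph{is} your $X_{H_0}$ with $H_0=\sin\theta$ (the paper takes the contact Hamiltonian $\sin\theta$ with respect to $\beta$, giving $X=\sin\theta\, R_\beta+\cos\theta\, Z$ near the pages, with source page $W_-$ at $\theta=0$ and sink page $W_+$ at $\theta=\pi$), and your local computations $\dot\theta=H_0+\chi\rho^2$, $\frac{d}{dt}\rho^2\approx 2H_0'(c_-)\rho^2$ are correct. But you misidentify the ``defect'': in the paper the outward Liouville drift on $W_-\setminus L_-$ is not repaired, it is the \emph{mechanism} --- orbits flow out along $W_-$, cross the binding, and emerge on $W_+$, where $-Z$ contracts them onto $L_+$. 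Accordingly, the entire technical content of the paper's proof lives in the binding region: one rescales to the smooth contact form $r^2\beta=r^2d\theta+\eta$ near $K$ and replaces the Hamiltonian there by $H=f(r)y$ with $f$ constant near $r=0$. This interpolation is unavoidable even for your uncorrected field, since with respect to $r^2\beta$ the Hamiltonian of $X_{H_0}$ is $r^2\sin\theta=ry$, which is only $C^1$ at $r=0$; and the resulting estimates $dx(X)\leq -c/2$, $d\theta(X)\geq \tfrac{c}{2}\sin\theta$ are what guarantee that orbits traverse the binding region in bounded time and enter $W_+$.

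This is where your proposal has a genuine gap, and it is not the routine verification you defer to the end: the correction $H_1=-\chi(\theta)\rho^2$ is \emph{unbounded} near $K$, because any $\rho$ with $Z\cdot\rho=\rho$ grows exponentially along Liouville orbits and hence blows up at the binding (the ideal boundary of each page), so adding $H_1$ makes the extension problem strictly worse rather than solving a problem --- the paper needs no correction at all. Nor can the near-binding analysis be outsourced by a clever choice of $\chi$: the set $M\setminus U_-$ contains $K$ itself and points arbitrarily close to it, so the lemma forces you to define $X$ on $K$ and control those orbits (in the paper, binding points satisfy $dx(X)=-f(0)/2<0$ and drain into $W_+$); your field is so far only defined on $M\setminus K$. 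Two secondary issues: smoothness of a global $\rho$ with $Z\cdot\rho=\rho$ across the skeleton (where $Z$ vanishes) is not automatic; and your perturbation can create new zeros or invariant sets on the locus $\sin\theta+\chi(\theta)\rho^2=0$ just behind the source page, so the claim that the basin of $L_+$ is all of $M\setminus L_-$ needs an actual flow-line argument (the paper's two-case ``Claim''), not a small-perturbation heuristic. In short, the dynamical skeleton is right and agrees with the paper away from $K$, but the step you flag as ``the main obstacle'' is precisely the content of the paper's proof and is absent from yours.
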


\begin{proof}
Without loss of generality assume that $W_-=\theta^{-1}(0)$ and $W_+=\theta^{-1}(\pi)$.

Let $V\cong K\times D^2$ be an open neighbourhood of $K$ such that on $V$ $\beta$ is of the form 
$$\beta=d\theta +\frac{\eta}{r^2}.$$
Let $f(r)$ be some non-decreasing function interpolating between $0<c<1$ near $0$ and $r$ near $1$.
Consider the function $H=f(r)r\sin\theta=f(r)y$.
We define $X$ as the contact vector field defined by the contact Hamiltonian $\sin\theta$ with respect to the contact form $\beta$ on $M\setminus V$ and defined by the contact Hamiltonian $H$ with respect to the contact form $r^2\beta$ on $V$.
The vector field $X$ is smooth since near $\partial V$ one has that $r^2\sin\theta=H$.

Let $(W,\lambda)$ be exact symplectomorphic to the pages.
Lemma \ref{lem_ob_pages} allows to choose $\epsilon>0$ and embeddings with disjoint images $i_{\pm}\colon W\times(-\epsilon,\epsilon)\rightarrow M$ as described above such that $i_{\pm}(W\times\{0\})=W_{\pm}$.

On $i_{\pm}(W\times(-\epsilon,\epsilon)) $ the vector field $X$ is given by $X=\cos \theta (i_{\pm})_{\ast}Z+\sin \theta R_{\beta}$.
Here $Z$ denotes the Liouville vector field on $W$.

An easy computation shows that on $V$ we have that
$$X=\frac{rf-r^2f'}{2}\sin\theta R_{\eta}+\frac{rf'+f}{2r}\sin\theta\partial_{\theta}-\frac{f}{2}\cos\theta\partial_r.$$
It follows that on $V$
$$dx(X)=d(r\cos\theta)(X)=-\frac{rf'\sin\theta+f}{2}\leq -\frac{c}{2},$$
and on $V\setminus K$
$$d\theta(X)=\frac{rf'+f}{2r}\sin\theta\geq \frac{c}{2}\sin\theta.$$

Note that the zeros of $X$ are precisely $L_\pm$:
By definition the vector field $X$ is nowhere vanishing on $M\setminus (V\cup W_{\pm})$ and by the considerations above it is nowhere vanishing on $V$.
On $W_{\pm}$ the vector field $X$ is proportional to the Liouville flow.

As on $V$ one has $dx(X)<0$ and
$$dy(X)=d(r\sin\theta)(X)=\frac{rf'}{2}\cos\theta\sin\theta,$$
one can see that a flow line $\phi_t(p)$ intersects $K$ if and only if $p\in W_-\setminus L_-$, and that in this case $\phi_t(p)$ enters $W_+$ after crossing $K$.

Now pick a domain $P\subset W$ with smooth star-shaped boundary and $\epsilon$ small enough such that $A_+:=i(P\times (-\epsilon,\epsilon))\subset U_+$.

{\bf Claim:} For any $p\in M\setminus L_-$ there exists a $T>0$ such that $\phi_t(p)\in A_+$ for all $t\geq T$.
Then the Lemma follows from the compactness of $M\setminus U_-$.

1. case: $p\in (W_-\setminus L_-)\cup K$.
In this case $\phi_t(p)$ is contained in $W_-\cup W_+\cup K$ and given by the Liouville flow on $W_-\setminus V$ and the inverse of the Liouville flow on $W_+\setminus V$.
Since $\partial (V\cap W_{\pm})$ is a star-shaped hyper surface which is the boundary of a compact domain in $W_{\pm}$, the Liouville flow reaches it in finite time.
As shown above $\phi_t(p)$ exits $V$ after a finite time.

2. case: $p\notin (W_-\setminus L_-)\cup K$.
Assume that $p\notin W_+$.
On $M\setminus V$ one can compute
$$d\theta(X)=\sin\theta d\theta(R_{\beta}).$$
Together with the computations on $V$ this implies that along $\phi_t(p)$, $d\theta$ is bounded from either below or above on $M\setminus i_+(W\times (-\epsilon,\epsilon))$ depending on the sign of $\sin\theta(p)$.
In particular $\phi_t(p)$ reaches $i_+(W\times (-\epsilon,\epsilon))$ in finite time and leaves $V$ in finite time.
As on $i_+(W\times (-\epsilon,\epsilon))$ the projection of the flow of $X$ is proportional to the inverse Liouville flow (with a factor close to $1$), similar to the first case the flow line $\phi_t(p)$ reaches $A_+$ in finite time.

\end{proof}

\subsection{Loose Legendrians}

A class of Legendrians with particularly flexible properties, called loose Legendrians, was introduced by Murphy in \cite{Murphy12}, see for example also \cite{Cieliebak12, Casals19, Courte18} for further details on loose Legendrians.

Consider the standard contact $3$-ball $B^3(2)$ of radius two with contact form $\alpha_0=dz-xdy$ and the front projection $(x,y,z)\mapsto (y,z)$.
Let $\gamma$ be the Legendrian arc whose front projection is a zig-zag as shown in Figure \ref{figzigzag}.
For $r>1$ consider the sets $V_r:=\{(q,p)\in T^{\ast}R^n||q|<r,|p|<r\}$ and $Z_r:=V_r\cap\{p=0\}$.
Then $L_r:=\gamma\times Z_r$ is a Legendrian in $ B^3(2)\times V_r$ equipped with the contact form $\alpha_0+\lambda$, where $\lambda$ denotes the standard Liouville form.

A connected Legendrian $L$ in a contact manifold $(M^{2n+1},\xi)$, where $n\geq 2$, is called \textbf{loose} if there exist an open subset $U\subset M$, $r>1$ and a contactomorphism $\phi\colon U\rightarrow  B^3(2)\times V_r$ with $\phi(U\cap L)=L_r$.
We refer to the pair $(U,U\cap L)$ as a \textbf{loose chart} for $L$.

We define a disconnected Legendrian to be loose if every connected component is loose in the complement of the others.

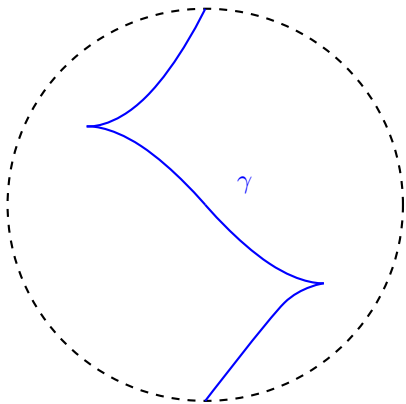
\begin{figure}
    \centering
  \begin{tikzpicture}[scale=1.3]
    
     \draw[dashed, thick] (0,0) ellipse (2cm and 2cm);
   
     \draw[thick,blue ] plot[smooth, tension=0.5] coordinates {
      (0,-2)
      (0.8,-1)
      (1.2,-0.8)
    };

     \draw[thick,blue ] plot[smooth, tension=1] coordinates {
      (1.2,-0.8)
      (0.65, -0.6)
      (0,0)
    };

         \draw[thick,blue ] plot[smooth, tension=1] coordinates {
      (0,0)
      (-0.65,0.6)
      (-1.2,0.8)
    };

         \draw[thick,blue ] plot[smooth, tension=1] coordinates {
      (-1.2,0.8)
      (-0.6,1.1)
      (0,2)
    };

    \node[thick,blue] at (0.4,0.2) {$\gamma$};
\end{tikzpicture}
    \caption{Front projection of the Legendrian arc $\gamma$.}
    \label{figzigzag}
\end{figure}

As shown in \cite{Murphy12}, loose Legendrians satisfy an $h$-principle, which can be used to prove various flexibility results.
In this paper we will use the following application of the $h$-principle to the contact Hofer metric on the isotopy class of a loose Legendrian which the simplification of a theorem proved by Nakamura \cite[Theorem 1.2]{Nakamura21}.

\begin{theorem}[\cite{Nakamura21}]\label{thm:naka}
 Let $(M,\ker\alpha)$ be a closed contact manifold and $\Lambda$ a closed loose Legendrian in $M$.
 Let $(\phi_t)_{t\in [0,1]}$ be an isotopy of contactomorphisms starting at the identity.
 Suppose that there exists a loose chart $U$ for $\Lambda$ such that the contact Hofer norm is bounded by $c_1>0$ on $\mathcal{G}(U)$ and by $c_2>0$ on $\mathcal{G}(\phi_1(U))$.
 Then for any $\delta>0$ there exists a contact isotopy $(\psi_t)_{t\in [0,1]}$ of length $l_{\alpha}(\psi_t)\leq c_1+c_2+\delta$ such that $\psi_1(\Lambda)=\phi_1(\Lambda)$ and $\psi_t(\Lambda)$ is $\mathcal{C}^0$-close to $\phi_t(\Lambda)$ for each $t$.
\end{theorem}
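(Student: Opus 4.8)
The plan is to exploit the given isotopy $\phi_t$ only through its effect on $\Lambda$, and to recall that the contact Hofer length $l_\alpha(\psi_t)=\int_0^1\max_M|\alpha(\dot\psi_t)|\,dt$ sees only the contact Hamiltonian, which we are free to choose as a controlled extension of the normal velocity of the moving Legendrian. First I would record that $\Lambda_t:=\phi_t(\Lambda)$ is a genuine, hence in particular formal, Legendrian isotopy from the loose Legendrian $\Lambda$, with loose chart $U$, to the Legendrian $\phi_1(\Lambda)$, which is again loose with loose chart $\phi_1(U)$ since $\phi_1\in\mathcal{G}$ is a contactomorphism and looseness is a contact-invariant property. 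The theorem is thereby reduced to a quantitative realization problem: approximate the path $(\Lambda_t)$ in the $\mathcal{C}^0$-topology by a genuine Legendrian isotopy realizable by a contact isotopy $\psi_t$ of contact Hofer length at most $c_1+c_2+\delta$, with $\psi_1(\Lambda)=\phi_1(\Lambda)$ exactly.

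I propose to build $\psi_t$ as a concatenation of three stages, reparametrized to $[0,1]$. The middle stage tracks $(\Lambda_t)$: invoking Murphy's $h$-principle for loose Legendrians \cite{Murphy12} in its holonomic-approximation form, I produce a genuine Legendrian isotopy that is $\mathcal{C}^0$-close to $(\Lambda_t)$ but routes its apparent Reeb displacement through the zig-zag of Figure \ref{figzigzag}, so that the normal (Reeb) velocity of the moving Legendrian is uniformly small; a controlled extension of this velocity to $M$, chosen to vanish in the directions tangent to $\Lambda_t$, then furnishes a global contact Hamiltonian of arbitrarily small maximum and contributes at most $\delta$ to the length. The first and last stages are contact isotopies supported respectively in $U$ and in $\phi_1(U)$: they perform the flexibility-activating and endpoint-correcting moves in the loose charts that the cheap middle stage cannot realize by itself, and that bring $\psi_1(\Lambda)$ onto $\phi_1(\Lambda)$ on the nose. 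Being elements of $\mathcal{G}(U)$ and $\mathcal{G}(\phi_1(U))$, they have contact Hofer length at most $c_1$ and $c_2$ by hypothesis. Summing the three contributions gives $l_\alpha(\psi_t)\le c_1+c_2+\delta$, and the required $\mathcal{C}^0$-proximity $\psi_t(\Lambda)\approx\phi_t(\Lambda)$ holds because the outer stages are $\mathcal{C}^0$-small perturbations inside the charts while the middle stage is $\mathcal{C}^0$-close by construction.

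The hard part will be the middle stage, namely upgrading Murphy's \emph{qualitative} $h$-principle --- which only asserts $\mathcal{C}^0$-density of genuine Legendrian isotopies among formal ones --- to the \emph{quantitative} statement that the approximating isotopy can be realized with arbitrarily small contact Hamiltonian. The subtlety is that the contact Hofer length is invariant under reparametrization of the time interval, so the reduction cannot come from traversing $(\Lambda_t)$ more slowly; it must come from genuinely lowering the net Reeb displacement of the Legendrian by feeding it through the loose chart, which is precisely the technical heart of \cite{Nakamura21}. One must therefore run the holonomic approximation underlying the loose $h$-principle inside a $\mathcal{C}^0$-small tube around $(\Lambda_t)$ while keeping the Reeb-direction displacement controlled at every instant, and one must verify that looseness persists for all $t\in[0,1]$ so that the $h$-principle applies along the entire path. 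A secondary point requiring care is that the tangential reparametrizations of $\Lambda$ inevitably introduced by the approximation do not feed back into the Reeb direction and thereby inflate $\max_M|\alpha(\dot\psi_t)|$; this is exactly what the choice of a tangentially vanishing extension of the normal velocity is designed to prevent.
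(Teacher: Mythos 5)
The paper does not actually prove this statement: it is quoted, as a simplification of \cite[Theorem 1.2]{Nakamura21}, and used as a black box, so the benchmark is Nakamura's argument itself. Your three-stage architecture (a move in $U$ of cost $\le c_1$, a cheap tracking stage of cost $\le\delta$, a move in $\phi_1(U)$ of cost $\le c_2$) is consistent with the shape of the bound, and you correctly locate the difficulty. But that is exactly where your proposal has a genuine gap: the middle stage is asserted, not constructed. You invoke ``Murphy's $h$-principle in its holonomic-approximation form'' to produce a genuine Legendrian isotopy $\mathcal{C}^0$-tracking $\phi_t(\Lambda)$ with uniformly small Reeb-normal velocity, and then concede that this quantitative statement ``is precisely the technical heart of \cite{Nakamura21}.'' That concession makes the argument circular: small-energy tracking is not an off-the-shelf strengthening of \cite{Murphy12}, whose $h$-principle gives $\mathcal{C}^0$-density of genuine isotopies among formal ones with no control on $\alpha(\dot\iota_t)$. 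Indeed, since the Chekanov--Hofer metric on a Legendrian isotopy class is nondegenerate \cite{Dimitroglou24}, $\mathcal{C}^0$-smallness of a Legendrian isotopy does \emph{not} imply smallness of its energy; the needed control must be engineered, which is what Nakamura's construction does (and which is why the chart costs $c_1$, $c_2$ enter at all). Your remark that the extension of the normal velocity can be chosen to vanish tangentially is standard but beside the point: the problem is to make the normal velocity small in the first place, for all $t$ simultaneously, while terminating exactly at $\phi_1(\Lambda)$.

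Two secondary steps are also left unjustified. First, the exact endpoint condition $\psi_1(\Lambda)=\phi_1(\Lambda)$ requires the final correction to be implemented by a contact isotopy \emph{compactly supported in} $\phi_1(U)$ carrying the approximating Legendrian onto $\phi_1(\Lambda)$ on the nose; this needs an $h$-principle statement with support control, not mere $\mathcal{C}^0$-density, and it is where looseness of $\phi_1(\Lambda)$ with chart $\phi_1(U)$ (which you correctly note is automatic by contact invariance) is actually consumed. Second, a minor accounting point: the hypothesis bounds the contact Hofer \emph{norm} on $\mathcal{G}(U)$ and $\mathcal{G}(\phi_1(U))$, i.e.\ an infimum of lengths, so the outer stages are realized by paths of length $\le c_1+\delta'$ and $\le c_2+\delta'$; this is harmless since $\delta$ is arbitrary, but should be said. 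As written, then, your proposal is a plausible reduction of the theorem to itself: the skeleton matches what one expects of Nakamura's proof, but the decisive quantitative wiggling construction --- distributing the zig-zag flexibility along $\phi_t(\Lambda)$ so that the generating Hamiltonian is uniformly small outside the two charts --- is named rather than carried out.
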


\begin{remark}
 It follows from Proposition \ref{prophandle} that such an upper bound $c_1$ exists for any loose chart, see also \cite[Remark 1.3]{Nakamura21}.  
\end{remark}

We will use the following notion of a "singular Legendrian submanifold". A subset $L$ of a contact manifold $M$ of dimension $2n+1$ is called an {\bf isotropic complex} if it admits a filtration \[\emptyset = L^{-1} \subset L^0 \subset \ldots \subset L^{n-1} \subset L^n = L\] such that for all $0 \leq i \leq n,$ $L^i \setminus L^{i-1}$ is an isotropic submanifold of $M$ without
boundary of dimension $i$ and each point of $L^n \setminus L^{n-1}$ has an open neighbourhood $O$ such that $O \cap L = O \cap (L^n \setminus L^{n-1}).$ 

Note that given a contact manifold that is supported by an open book decomposition with pages exact symplectomorphic to a finite type Weinstein manifold, the skeletons of each page forms an isotropic complex in $M$.

An isotropic complex is called loose if $L \setminus L^{n-1}$ is loose in $M \setminus L^{n-1}.$ We remark that Theorem \ref{thm:naka} readily extends to this setting as follows: 

\begin{theorem}\label{thm:naka2}
 Let $(M,\ker\alpha)$ be a closed contact manifold and $L$ a loose isotropic complex in $M$.
 Let $(\phi_t)_{t\in [0,1]}$ be an isotopy of contactomorphisms starting at the identity and $\phi_t = \id$ for all $t$ on a neighbourhood $U$ of $L^{n-1}.$ Suppose that there exists a collection of loose charts $V$ for $L \setminus U$ in $M \setminus U$ such that the contact Hofer norm is bounded by $c_1>0$ on $\mathcal{G}(U)$ and by $c_2>0$ on $\mathcal{G}(\phi_1(U))$. Then for any $\delta>0$ there exists a contact isotopy $(\psi_t)_{t\in [0,1]}$ of length $l_{\alpha}(\psi_t)\leq c_1+c_2+\delta$ such that $\psi_t = \id$ for all $t$ on $U,$ $\psi_1(L)=\phi_1(L),$ and $\psi_t(L)$ is $\mathcal{C}^0$-close to $\phi_t(L)$.
\end{theorem}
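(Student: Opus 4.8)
The plan is to deduce Theorem \ref{thm:naka2} from Theorem \ref{thm:naka} by a relative excision argument that confines all of Nakamura's constructions to the complement of the subcritical part $L^{n-1}$.

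First I would excise a neighbourhood of the subcritical skeleton. Choose a slightly smaller open neighbourhood $U' \Subset U$ of $L^{n-1}$ and set $N := M \setminus \overline{U'}$. Since $\phi_t = \id$ on $U \supset \overline{U'}$, the restriction $\phi_t|_N$ is supported in $M \setminus U$, and the part of the top stratum $\Lambda := L \setminus L^{n-1}$ that actually moves, namely the compact Legendrian-with-boundary $\Lambda \cap (M \setminus U) = L \setminus U$ (here we use $L^{n-1} \subset U$), has its boundary in $\partial U$ and lies inside the frozen collar $U \setminus \overline{U'}$ of $N$. By definition of a loose isotropic complex, $\Lambda$ is loose in $M \setminus L^{n-1}$, and the hypothesis furnishes the loose charts $V \subset M \setminus U$ for $L \setminus U$; these are disjoint from $U$. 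Thus, although $\Lambda$ itself is not closed, the \emph{moving} piece is compact and the isotopy is trivial near its boundary, which is exactly the input Theorem \ref{thm:naka} requires, up to working rel the collar.

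Second, I would run Nakamura's argument relative to this collar. The construction in \cite{Nakamura21} is local near the loose charts and near the Legendrian: it applies Murphy's $h$-principle \cite{Murphy12} inside a loose chart to absorb the isotopy by a $\mathcal{C}^0$-small wiggling, then transports along $\phi_t$. Since this $h$-principle holds rel a closed subset on which the data already agree, every such move can be performed keeping the isotopy equal to the identity on $U$. Concretely, one produces $\psi_t$ by first pushing $L \setminus U$ into the loose charts $V$, at contact Hofer cost at most $c_1 + \delta/2$ (the bound on $\mathcal{G}(V)$, whose existence for any loose chart follows from Proposition \ref{prophandle}), and then pulling it out near $\phi_1(L \setminus U)$ through $\phi_1(V)$, at cost at most $c_2 + \delta/2$, exactly as in the proof of Theorem \ref{thm:naka}. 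Because $V$ and all of this activity lie in $M \setminus U$, the resulting $\psi_t$ is supported in $M \setminus U$; in particular $\psi_t = \id$ on $U$ for all $t$, and $l_{\alpha}(\psi_t) \le c_1 + c_2 + \delta$.

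Finally I would reassemble the full complex. Since $\psi_t = \id$ on $U \supset L^{n-1}$ we have $\psi_t(L^{n-1}) = L^{n-1} = \phi_t(L^{n-1})$; combined with $\psi_1(\Lambda) = \phi_1(\Lambda)$ and the $\mathcal{C}^0$-closeness of $\psi_t(\Lambda)$ to $\phi_t(\Lambda)$ coming from Nakamura's conclusion, and using $L = \Lambda \cup L^{n-1}$, this yields $\psi_1(L) = \phi_1(L)$ and $\psi_t(L)$ $\mathcal{C}^0$-close to $\phi_t(L)$ for every $t$. The main obstacle is precisely the non-compactness of the top stratum $\Lambda$: Nakamura's uniform length and $\mathcal{C}^0$ estimates are phrased for closed Legendrians, so one must verify that the loose-chart absorption and the estimates remain uniform up to $\partial(L \setminus U)$. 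This is exactly where the hypothesis $\phi_t = \id$ near $L^{n-1}$ enters — it provides a fixed collar on which nothing moves, so the constructions extend by the identity and the estimates localize to the compact moving piece, giving the required bound with no boundary contribution.
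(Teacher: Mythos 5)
Your proposal is correct and takes essentially the same route as the paper: the paper gives no independent proof of Theorem \ref{thm:naka2}, asserting only that Nakamura's argument ``readily extends'' to the relative setting, and your excision of a neighbourhood of $L^{n-1}$ plus running Nakamura rel the frozen collar (via the relative form of Murphy's $h$-principle) is precisely the fleshing-out of that claim --- including the correct reading of the hypothesis, where the bounds $c_1, c_2$ must refer to $\mathcal{G}(V)$ and $\mathcal{G}(\phi_1(V))$ for the loose charts $V$, since as printed $\phi_1(U)=U$ would make the stated bounds vacuous duplicates. The one step you assert rather than verify in detail --- that Nakamura's length and $\mathcal{C}^0$ estimates remain uniform up to the boundary of the compact moving piece $L\setminus U$ --- is exactly the step the paper also leaves implicit, so your write-up is, if anything, more explicit than the original.
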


\section{Bounds on the contact Hofer norm}

\subsection{The contact Hofer diameter of skeleton-complements}

In this section we bound the contact Hofer norm for contactomorphisms compactly supported in the complement of the skeleton of a page of an open book decomposition supporting a contact structure.
Let $(K,\theta)$ be an open book decomposition supporting a closed contact manifold $(M,\xi)$, see \ref{subsecOB}.

Lemma \ref{lem_ob_contraction} shows that given two different pages $W_{\pm}$ with skeletons $L_{\pm}$, there exists a vector field $X$ on $M$ whose flow $\psi_t$ satisfies that for every open neighbourhood $U$ of $L_-$
\begin{align*}
\bigcap\limits_{t\geq 0}\psi_t(M\setminus U)=L_+.
\end{align*}

\begin{prop}\label{prophandle}
Let $V=M\setminus L_-$, then the contact Hofer norm (with respect to any contact form that extends to $M$) is bounded on $\Tilde{\mathcal{G}}(V)$ and hence on $\mathcal{G}(V)$.
\end{prop}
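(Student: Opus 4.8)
The plan is to prove a uniform bound: every $\phi\in\mathcal{G}(V)$, say supported in a compact set $C\subset M\setminus L_-$ and represented by a path $(\phi_s)_{s\in[0,1]}$ supported in $C$, satisfies $|\tilde\phi|_{\alpha}\le B$ for a constant $B$ independent of $\phi$. The two ingredients are the contraction flow $\psi_t$ of Lemma \ref{lem_ob_contraction}, which for $T$ large pushes $C$ into an arbitrarily small neighbourhood of the second skeleton $L_+$, and the conjugation formula (iv). Set $\Phi_T=\psi_T\phi\psi_T^{-1}$. The homotopy $F(s,t)=\psi_t\phi_s\psi_t^{-1}$ (with constant left edge $F(0,t)\equiv\id$, since $\phi_0=\id$) exhibits the original path $(\phi_s)_s$ as homotopic rel endpoints to the concatenation of $s\mapsto\psi_T\phi_s\psi_T^{-1}$ (from $\id$ to $\Phi_T$) with the conjugation path $\eta_t=\psi_t\phi\psi_t^{-1}$ run from $t=T$ to $t=0$ (from $\Phi_T$ to $\phi$). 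Consequently
\[ |\tilde\phi|_{\alpha}\le l_{\alpha}\big((\psi_T\phi_s\psi_T^{-1})_{s}\big)+l_{\alpha}\big((\eta_t)_{t}\big). \]

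For the first term, write $\psi_T^{*}\alpha=e^{a_T}\alpha$. The conjugated path has contact Hamiltonian supported in $C$ and scaled by $e^{a_T}$, so its length is at most $(\max_C e^{a_T})\,l_{\alpha}((\phi_s)_s)$. Since $\psi_T$ contracts the contact form on $C$ towards $L_+$, the factor $\max_C e^{a_T}$ tends to $0$ as $T\to\infty$; hence this term can be made $\le 1$ by choosing $T=T(\phi)$ large. The whole statement therefore reduces to the uniform estimate $l_{\alpha}((\eta_t)_t)\le B$, with $B$ independent of $\phi$ and of $T$.

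Differentiating $\eta_t=\psi_t\phi\psi_t^{-1}$ gives the generating vector field $V_t=X-(\eta_t)_{*}X$, where $X$ generates $\psi_t$; this vanishes off $\psi_t(C)$, so with $H=\alpha(X)$ one has $l_{\alpha}((\eta_t)_t)=\int_0^T\max_{\psi_t(C)}\big|H-\alpha((\eta_t)_{*}X)\big|\,dt$. The decisive structural input is that $H$ vanishes on both pages: on $W_\pm$ the field $X$ is proportional to the Liouville field $Z$, and $\alpha(Z)=\lambda(Z)=d\lambda(Z,Z)=0$, while near the binding $H$ is a bounded smooth function vanishing on $L_\pm$; away from the binding $\alpha=\rho\beta$ gives $H=\rho\sin\theta$. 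Combined with $\dot\theta=d\theta(X)=\sin\theta\,d\theta(R_{\beta})$ and $d\theta(R_\beta)>0$, the change of variables $dt=d\theta/(\sin\theta\,d\theta(R_\beta))$ cancels the $\sin\theta$ factors, yielding $\int_0^{T}\max_{\psi_t(C)}|H|\,dt\le\mathrm{const}$ independently of how close $C$ starts to $L_-$: the slow escape near $\theta=0$ is exactly compensated by the smallness of $H$ there. This bounds the first summand uniformly.

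The main obstacle is the second summand $\int_0^T\max_{\psi_t(C)}|\alpha((\eta_t)_{*}X)|\,dt$. Writing $\eta_t^{*}\alpha=e^{u_t}\alpha$ gives $\alpha((\eta_t)_{*}X)=e^{u_t}(H\circ\eta_t^{-1})$, and since $\eta_t$ preserves $\psi_t(C)$ the factor $H\circ\eta_t^{-1}$ inherits the decay of $\max_{\psi_t(C)}|H|$. What must be controlled is the conformal factor: on $\psi_t(C)$ one computes $u_t=a_t(\phi q)-a_t(q)+b(q)$, where $q=\psi_t^{-1}p\in C$ and $\phi^{*}\alpha=e^{b}\alpha$. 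Thus everything hinges on showing that the variation $a_t(\phi q)-a_t(q)=\int_0^t\big[g(\psi_s\phi q)-g(\psi_s q)\big]\,ds$ of the contraction's conformal factor $g=dH(R_{\alpha})$ stays bounded uniformly in $t$ and $q\in C$. This is where the dynamics of $\psi_t$ near the possibly singular skeleton $L_+$ enters: the orbits $\psi_s\phi q$ and $\psi_s q$ both converge to $L_+$ and approach one another, so the integrand ought to be integrable, but making this quantitative near $L_+$ requires the explicit local models of the ideal Giroux form from Lemma \ref{lem_ob_pages} together with estimates on $g$ there. Once $u_t$ is bounded, the second summand obeys the same $\theta$-cancellation as the first, producing the universal constant $B$ and hence boundedness on $\tilde{\mathcal{G}}(V)$; boundedness on $\mathcal{G}(V)$ follows a fortiori.
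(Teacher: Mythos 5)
Your overall skeleton (contract the support toward $L_+$ by the flow $\psi_t$ of Lemma \ref{lem_ob_contraction}, split the path into a rescaled conjugated piece plus a conjugation piece, absorb the rescaled piece) is the right circle of ideas, and your homotopy decomposition and the formula $u_t(p)=a_t(\phi q)-a_t(q)+b(q)$, $q=\psi_t^{-1}p$, are correct. But the proof does not close, for two concrete reasons, one of which you flag and one of which you assert incorrectly. First, the boundedness of $u_t$ is not a technicality that local models will deliver: the conformal rate of $\psi_t$ with respect to $\al$ is $g=\cos\theta+X(\log\rho)$, and on $L_+$ (where $\theta=\pi$ and $X$ is $-Z$) this is $-1-Z(\log\rho)$, which is \emph{not constant} along $L_+$, since the Liouville field $Z$ is tangent to and nonvanishing along the skeleton away from the critical points. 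Two orbits $\psi_s q$ and $\psi_s(\phi q)$ converge to $L_+$ but need not approach each other; they can shadow different heteroclinic itineraries inside the skeleton, spending unboundedly different amounts of time in regions where $g$ differs, so $a_t(\phi q)-a_t(q)=\int_0^t\bigl[g(\psi_s\phi q)-g(\psi_s q)\bigr]ds$ can grow linearly in $t$. The paper never faces this problem because it conjugates only inside the collar model $\bigl((-\epsilon,\epsilon)\times W,\,dt+\lambda\bigr)$ by the flow of the contact Hamiltonian $-t$, whose conformal factor $e^{-s}$ is \emph{spatially constant}; conjugation then rescales the norm by exactly $e^{-s}$, and no analogue of your $u_t$ appears.

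Second, your claimed uniform bound on $\int_0^T\max_{\psi_t(C)}|H|\,dt$ is false: the $\theta$-reparametrisation $dt=d\theta/(\sin\theta\, d\theta(R_\beta))$ bounds the \emph{orbitwise} integral $\int_0^\infty|H(\psi_t p)|\,dt$ uniformly in $p$, but the Hofer length takes $\max$ over the moving support \emph{before} integrating in time, and max and integral do not commute. If $C$ contains points with initial angles $\theta_k=2^{-k}$, $k\leq N$, then since $\dot\theta\approx c\sin\theta$, successive points cross the region $\theta\approx\pi/2$ (where $|H|=\rho\sin\theta\geq\rho_{\min}>0$) at times spaced $O(1)$ apart, so $\int_0^T\max_{\psi_t(C)}|H|\,dt\gtrsim N$; i.e.\ the length of your conjugation path grows like the escape time of $C$ from a neighbourhood of $L_-$, which is unbounded as $\phi$ ranges over $\mathcal{G}(V)$. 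This is precisely why the paper inserts a step your scheme lacks: it first pushes the support into the fixed compact set $V\setminus\Tilde{U}$ at uniformly bounded cost $C_-$ (another collar-model conjugation, near $W_-$), after which a \emph{single fixed} time $T$, independent of $\phi$, carries the support into the collar $U_+$; the only contraction-flow cost is then the fixed constant $2|\tilde\psi_T|_\eta$ for the group element $\psi_T$. The paper thus only ever bounds norms of fixed conjugators and of cut-off Hamiltonians with exponentially decaying sup (giving $|\tilde f_1|_\al\leq 2\epsilon_0+2\delta+e^{-s}|\tilde f_1|_\al$ and letting $s\to\infty$), never the length of the infinite-time conjugation path $t\mapsto\psi_t\phi\psi_t^{-1}$. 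Note also that your absorption of $l_\al((\phi_s))$ in the first term forces $T=T(\phi)\to\infty$, so the absence of uniform-in-$T$ control in the second term is fatal; to repair the argument you would need to uniformize the escape time first, which is exactly the paper's route.
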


\begin{proof}
Note that $W_{\pm}$ has a neighbourhood that is contactomorphic to $(-\epsilon,\epsilon)\times W$ equipped with the contact form $dt+\lambda$, where $(W,\lambda)$ is exact symplectomorphic to the pages of the open book.

\textbf{Claim:} Let $(U,\alpha)=((-\epsilon,\epsilon)\times W, dt+\lambda)$.
Then $|\cdot|_{\alpha}$ is bounded on $\tilde{\mathcal{G}}(U)$.

Let $(f_s)_{s\in[0,1]}$ be a contact isotopy in $U$ with support in some compact subset $K_0\subset (-\epsilon_0,\epsilon_0)\times W$ and let $K_1\subset (-\epsilon_1,\epsilon_1)\times W$ be a compact subset such that $K_0$ is contained in the interior of $K_1$.
Here we assume that $0<\epsilon_0<\epsilon_1<\epsilon$.
Consider the contact vector field $X$ with contact Hamiltonian $\alpha(X)=-t$ and flow $(\phi_s)_{s\geq 0}$.
Hence $X=-t\partial_t-Z$, where $Z$ denotes the Liouville vector field of $\lambda$.
Then $\phi_s(t_0,p_0)=(t_0e^{-s},\eta_s(p_0))$, where $\eta_s$ denotes the flow of $Z$ at time $s$.
Using an appropriate cut-off function, we define a contact Hamiltonian $h_s$ such that for each $s\geq 0$ $h_s|_{\phi_s(K_0)}=\alpha(X)$,  $h_s=0$ outside of $\phi_s(K_1)$ and $\max\limits_U|h_s|\leq \epsilon_0e^{-s}+\delta e^{-s}$ for some $\delta>0$ arbitrarily small.
Let $(g_s)_{s\geq 0}$ be the isotopy generated by $h_s$.
Note that $g_s|_{K_0}=\phi_s$ for each $s$.
Moreover, 
$$|\tilde{g}_s|_{\alpha}\leq \epsilon_0+\delta$$
for all $s>0$.

One can easily compute that $\phi_s^{\ast}\alpha=e^{-s}\alpha$, i.e., on $K_0$ we have
$$g_s^{\ast}\alpha|_{K_0}=e^{-s}\alpha|_{K_0}.$$
It follows using the properties of the contact Hofer norm that
\begin{align*}
 |\tilde{f}_1|_{\alpha}&= |\tilde{g}_s\tilde{g}^{-1}_s\tilde{f}_1\tilde{g}_s\tilde{g}_s^{-1}|_{\alpha}=|\tilde{g}_s (g_s^{-1}\tilde{f}_1 g_s)\tilde{g}_s^{-1}|_{\alpha}\leq 2|\tilde{g}_s|_{\alpha}+|g_s^{-1}\tilde{f}_1 g_s|_{\alpha}\\
 &=2|\tilde{g}_s|_{\alpha}+|\tilde{f}_1|_{g_s^{\ast}\alpha}\leq 2\epsilon_0+2\delta+e^{-s}|\tilde{f}_1|_{\alpha}. 
\end{align*}

Here we use that for fixed $s\geq 0$ the path $(g_s^{-1}f_rg_s)_{r\in [0,1]}$ is homotopic with fixed endpoints to the path $(g_{rs}^{-1}f_rg_{rs})_{r\in[0,1]}$ and denote by $g_s^{-1}\tilde{f}_1 g_s= \tilde{g}^{-1}_s\tilde{f}_1\tilde{g}_s$ its class in the universal cover.

Choosing $s$ arbitrarily large and $\delta$ small, we obtain
$$|\tilde{f}_1|_{\alpha}\leq 2\epsilon.$$

Let $\eta$ be the restriction of a contact form on $M$ to $V$.
Now let $(\psi_t)_{t\in\R}$ be the flow on $V$ contracting any neighbourhood of $L_-$ onto $L_+$.
Pick open neighbourhoods $U_{\pm}$ of $L_{\pm}$ such that $U_{\pm}$ is contactomorphic to $(-\epsilon,\epsilon)\times W$ equipped with the contact form $\alpha=dt+\lambda$.
Then $\eta|_{U_{\pm}}=\rho_{\pm}\alpha$, where $\rho_{\pm}$ is a bounded positive function on $U_{\pm}$. 
Let $\Tilde{U}\subset U_-$ be an open neighbourhood of $L_-$ whose closure is contained in $U_-$.
Since $V\setminus \Tilde{U}$ has compact closure, there exists a $T$ such that $\psi_T(V\setminus \Tilde{U})\subset U_+$.

Let $(f_s)_{s\in[0,1]}$ be a contact isotopy in $V$ that is supported in some compact subset $K$.
Up to modifying the contact Hamiltonian $h(t,p)=t$ on $U_-$, we can use its flow to move the support of $f_s$ into $V\setminus \Tilde{U}$ for each $s$.
Hence, using the claim and the fact that $\rho_{\pm}$ is bounded, we get
$$|\tilde{f}_1|_{\eta}\leq |\tilde{F}_1|_{\eta}+C_-,$$
where $(F_s)_{s\in[0,1]}$ is an isotopy with support in $V\setminus \Tilde{U}$ and $C_-$ is a constant depending on the choice of the neighbourhood $U_-$.
Moreover, the isotopy $(\psi_TF_s\psi_T^{-1})_{s\in[0,1]}$ is compactly supported in $U_+$, i.e., using the claim again we obtain
$$|\tilde{f}_1|_{\eta}\leq 2|\tilde{\psi}_T|_{\eta}+C_++C_-,$$
where $C_+$ is a constant depending on the choice of the neighbourhood $U_+$.

\end{proof} 

\subsection{A Fragmentation result}\label{secfrag}

Fragmentation results allow us to decompose a given contact isotopy into contact isotopies compactly supported in complements of skeletons.
The following elementary fragmentation result was proved in \cite{Fraser} for the case of points in the standard contact sphere and in \cite{Courte18} for general contact manifolds.

\begin{lemma}\label{lemfragmentation}
Let $K_{\pm}\subset M$ be compact subsets of a contact manifold $(M,\xi)$ (open or closed).
Let $(f_t)_{t\in[0,1]}$ be a compactly supported contact isotopy such that $f_t(K_-)\cap K_+=\emptyset$ for all $t\in [0,1]$.
Then $f_t=f_t^-\circ f_t^+$, where $f_t^{\pm}$ is compactly supported in $M\setminus K_{\pm}$.
\end{lemma}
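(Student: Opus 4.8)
The plan is to reduce the factorization to a controlled cut-off of the generating contact Hamiltonian, using the displacement hypothesis to separate the two supports. Fix a contact form $\al$ on $M$ (which exists since $\xi$ is coorientable) and let $h_t=\al(\dot f_t\circ f_t^{-1})$ be the contact Hamiltonian generating $(f_t)$. The elementary fact I would rely on is that the contact vector field $X_{\chi h_t}$ of a cut-off Hamiltonian $\chi h_t$ coincides with $X_{h_t}$ on the open set $\{\chi\equiv 1\}$ (there $d\chi=0$, so $\chi h_t$ and $h_t$ have the same $1$-jet) and vanishes on $\{\chi\equiv 0\}$. Cutting the \emph{Hamiltonian}, rather than the vector field, is what keeps the truncated flow contact.

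First I would produce the factor supported away from $K_+$. Consider the trajectory tube $T:=\bigcup_{t\in[0,1]}f_t(K_-)$, the image of the compact set $[0,1]\times K_-$ under $(t,x)\mapsto f_t(x)$, hence compact; by hypothesis $f_t(K_-)\cap K_+=\emptyset$ for all $t$, so $T\cap K_+=\emptyset$. I choose a cut-off $\chi\colon M\to[0,1]$ with $\chi\equiv 1$ on a neighbourhood of $T$ and $\chi\equiv 0$ on a neighbourhood $V$ of $K_+$, and let $f^+_t$ be the compactly supported contact isotopy generated by $\chi h_t$. Since $\chi\equiv 0$ on the fixed neighbourhood $V$ of $K_+$, its generator vanishes on $V$ for all $t$, so $f^+_t=\id$ on $V$; thus $f^+_t$ is compactly supported in $M\setminus K_+$.

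The crucial point is that $f^+_t$ agrees with $f_t$ near $K_-$. By compactness of $[0,1]\times K_-$ there is a neighbourhood $N$ of $K_-$ with $f_s(N)\subset\{\chi\equiv 1\}$ for all $s\in[0,1]$. For $x\in N$ the curve $s\mapsto f_s(x)$ stays inside $\{\chi\equiv 1\}$, where the generators of $f^+$ and of $f$ coincide; since $s\mapsto f^+_s(x)$ and $s\mapsto f_s(x)$ solve the same generating ODE with the same initial value, uniqueness gives $f^+_t=f_t$ on $N$. I then set $f^-_t:=(f^+_t)^{-1}\circ f_t$, so that $f_t=f^+_t\circ f^-_t$ is the desired factorization (read in the composition convention of the statement). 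On $N$ we have $f^-_t=(f^+_t)^{-1}\circ f_t=\id$, so $f^-_t$ is compactly supported in $M\setminus K_-$, and compact support of both factors follows from that of $f_t$ and of $\chi h_t$.

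The main obstacle is exactly this support bookkeeping, and it is more delicate than it first looks: a naive single cut-off forces one of the two factors to be supported only in the complement of the \emph{moving} set $f_t(K_-)$ rather than of the fixed set $K_-$, which would be useless for the intended uniform estimates. What saves the argument is (i) building the factor supported off $K_+$ (and not the one off $K_-$) from the cut-off, so that the complementary factor $(f^+_t)^{-1}\circ f_t$ — and not $f_t\circ(f^+_t)^{-1}$ — is the one that becomes the identity near the fixed set $K_-$; and (ii) the displacement hypothesis, used precisely to guarantee that the forward tube $T$ of $K_-$ avoids $K_+$, so that a single $\chi$ can be $1$ on $T$ and $0$ near $K_+$ at once. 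Getting the order of the factors right is therefore essential: with the two factors interchanged the very same scheme would instead demand the dual disjointness $f_t(K_+)\cap K_-=\emptyset$.
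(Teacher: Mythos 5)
Your proof is correct, and it is essentially the paper's argument: the paper likewise cuts off the generating contact Hamiltonian over (a neighbourhood of) the forward tube $A=\bigcup_{t}f_t(U)$, $U\supset K_-$, chosen disjoint from $K_+$, obtains the factor $f^+_t$ supported in $M\setminus K_+$ as the flow of the truncated Hamiltonian, and defines the other factor by division in the group. The one genuine divergence is the side on which you divide: the paper sets $f^-_t:=f_t\circ(f^+_t)^{-1}$, matching the stated order $f_t=f^-_t\circ f^+_t$, whereas you set $f^-_t:=(f^+_t)^{-1}\circ f_t$, yielding $f_t=f^+_t\circ f^-_t$. Your choice is in fact the careful one: since the cut-off construction gives $f^+_t=f_t$ on a fixed neighbourhood $N$ of $K_-$ (equivalently, $f^+_t$ maps $N$ onto $f_t(N)$), the quotient $f_t\circ(f^+_t)^{-1}$ is a priori the identity only on $f_t(N)$, a neighbourhood of the \emph{moving} set $f_t(K_-)$, while $(f^+_t)^{-1}\circ f_t$ is the identity on $N$ itself — exactly the subtlety you flag, and your observation that obtaining the opposite order by the same scheme would require the dual hypothesis $f_t(K_+)\cap K_-=\emptyset$ (apply the construction to $f_t^{-1}$ and invert) is accurate. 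The discrepancy in the order of the factors is immaterial for every use of the lemma in the paper, since the contact Hofer norm estimates only invoke subadditivity, $|\tilde f_1|_{\alpha}\leq|\tilde f_1^{+}|_{\alpha}+|\tilde f_1^{-}|_{\alpha}$, which is insensitive to the order; so your version of the statement serves all the applications, and your support bookkeeping is if anything tighter than the paper's own.
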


\begin{proof}
Since $K_{\pm}$ are compact, we can find an open neighbourhood $U$ of $K_-$ such that the set
$$A:=\bigcup\limits_{t\in [0,1]}f_t(U)$$
does not intersect $K_+$.
Choose an open neighbourhood $V$ with compact closure such that $\overline{A}\subset V$ and $V\cap K_+=\emptyset$.
By cutting off the contact Hamiltonian generating the path $f_t$, we can find a path $f^+_t$ such that $f^+_t=f_t$ on $f_t(U)$ and $f^+_t=\mathrm{id}$ on $M\setminus V$.
By definition $f^+_t$ is compactly supported in $M\setminus K_+$.
Define $f_t^-:=f_t\circ (f_t^+)^{-1}$.
Then $f_t^-$ is compactly supported in $M\setminus K_-$ and $f_t=f_t^-\circ f_t^+$.
\end{proof}

\begin{remark}
Lemma \ref{lemfragmentation} in general holds only for contact isotopies.
Given a contactomorphism $f$ such that $f(K^-)\cap K^+=\emptyset$ it is not clear if there exists a path $f_t$ connecting $f$ to $\mathrm{id}$ such that $f_t(K^-)$ does not intersect $K^+$.
\end{remark}

\subsection{Proof of Theorem \ref{thm1} (i)}\label{sec:pf thm1i}
Let $\beta$ be an ideal Giroux form for the open book $(K,\theta)$ and $W_{\pm}:=\theta^{-1}(\pm\pi)$ pages with skeletons $L_{\pm}$.
Since $W$ is a subcritical Weinstein manifold, $L_{\pm}$ is the union of isotropic submanifolds of dimension smaller than $n$.
Given an isotopy $(f_t)_{t\in [0,1]}$ starting at $\mathrm{id}$, we can thus find an isotopy $(f_t')_{t\in [0,1]}$ which is $\mathcal{C}^0$-close to $f_t$ such that $f_t'(L_-)\cap L_+=\emptyset$ for all $t\in [0,1]$, see e.g. \cite[Proposition 5.2]{Courte18}.

Proposition \ref{prophandle} and Lemma \ref{lemfragmentation} imply that there exists a $C>0$ such that $\vert \tilde{f'}_1\vert_{\alpha}\leq C$ in $\Tilde{\mathcal{G}}$.
As we can choose $f_t'$ arbitrarily close to $f_t$, the same bound holds for $(f_t')^{-1}f_t$.

It follows that 
$$\vert \tilde{f}_1\vert_{\alpha}=\vert \tilde{f'}_1\tilde{f'}_1^{-1}\tilde{f}_1\vert_{\alpha}\leq \vert \tilde{f'}_1\vert_{\alpha}+ \vert \tilde{f'}_1^{-1}\tilde{f}_1\vert_{\alpha}\leq 2C. $$
The non-orderability of $M$ follows from Theorem \ref{thm_hofer_orderable}.

\subsection{Proof of Theorem \ref{thm1} (ii)}

Suppose that the skeleton $L_-$ of some page $W_-$ is a loose Legendrian in $M$ and let $U$ be a loose chart for $L_-$.
As $U$ is a Darboux ball contactomorphic to a ball in the standard $\R^{2n+1}$, Proposition \ref{prophandle} implies the existence of a constant $C_1$ such that the contact Hofer norm on $\tilde{\mathcal{G}}(U)$ is bounded by $C_1$.

Let $\phi_t$ be a path of contactomorphisms such that $\phi_1$ is strict.
Then $\phi_1(L_-)$ is a loose Legendrian with loose chart $\phi_1(U)$.
Since $\phi_1$ is strict, the contact Hofer norm on $\tilde{\mathcal{G}}(\phi_1(U))$ is bounded by $C_1$.

Since $L$ is loose, \cite[Theorem 1.2]{Nakamura21} implies that for any $\delta>0$ there exists a contact isotopy $(\psi_t)_{t\in[0,1]}$ of contact Hofer length smaller than $\delta+2C_1$ such that $\psi_1(L_-)=\phi_1(L_-)$ and such that $\psi_t(L_-)$ is $\mathcal{C}^0$-close to $\phi_t(L_-)$.

It follows that $(\psi_t^{-1}\phi_t)(L_-)$ is $\mathcal{C}^0$-close to $L_-$, in particular there exists a page $W_+$ (independent of $\phi_t$) with skeleton $L_+$ such that $(\psi_t^{-1}\phi_t)(L_-)\cap L_+=\emptyset$ for all $t$.
Proposition \ref{prophandle} and Lemma \ref{lemfragmentation} imply that $\vert\tilde{\psi}_1^{-1}\tilde{\phi}_1\vert_{\alpha}\leq C_2$ for some constant $C_2$ independent of $(\phi_t)_{t\in [0,1]}$.
Hence
$$\vert\tilde{\phi}_1\vert_{\alpha}=\vert\tilde{\psi}_1\tilde{\psi}_1^{-1}\tilde{\phi}_1\vert_{\alpha}\leq \vert\tilde{\psi}_1\vert_{\alpha}+\vert\tilde{\psi}_1^{-1}\tilde{\phi}_1\vert_{\alpha}\leq \delta+2C_1+C_2.$$

This proves Theorem \ref{thm1} (ii).
The non-orderability of $M$ follows from Theorem \ref{thm_hofer_orderable} and the fact that the Reeb flow is strict.

\subsection{Proof of Corollary \ref{cor1stab}}

First note that the ideal contact boundary of $W\times\C$ admits an open book decomposition whose page is exact symplectomorphic to $W$ and the monodromy is isotopic to the identity map.

Let $L$ be the skeleton of $W$.
It remains to prove the case when $W$ is critical, i.e., when $L$ is a Lagrangian.

It follows from \cite[Theorem 1.1]{Starkston18} and \cite[Corollary 11.21]{Cieliebak12} that $W$ is exact symplectomorphic to $T^{\ast}L$.
Then for $p\in L$ the fibre over $p$ defines a Lagrangian $\R^n$ that is Legendrian at infinity and intersects $L$ in a unique point.
Hence \cite[Proposition 2.9]{Casals19} implies that the skeleton of the page of the open book, which is given by a lift of $L$, is loose in the ideal contact boundary of $W\times\C$.

The result follows from Theorem \ref{thm1}.

\subsection{Proof of Theorem \ref{thm2}}

Let $L\subset M$ be the skeleton of the page of some open book.
Suppose there exists a path of contactomorphisms $(\phi_t)_{t\geq 0}$ such that $L$ can be displaced from $$A:=\bigcup\limits_{t\geq 0}\phi_t(L),$$
i.e., there exists a contactomorphism $\psi$ with $\psi(L)\cap A=\emptyset$.
By Lemma \ref{lemfragmentation} we have $\phi_t=g_t\circ h_t$, where $g_t$ is compactly supported in $M\setminus L$ and $h_t$ in $M\setminus \psi(L)$.

Note that $\psi(L)$ is the skeleton of some other open book decomposition.
In particular, by Proposition \ref{prophandle}, the contact Hofer norms $\vert \tilde{g}_t\vert_{\alpha}$ and $\vert \tilde{h}_t\vert_{\alpha}$ can be bounded from above independent of $t$.
Thus, $\vert \tilde{\phi}_t\vert_{\alpha}$ is bounded from above independent of $t$.

If $\phi_t$ is a Reeb flow, Theorem \ref{thm_hofer_orderable} implies the existence of a positive contractible loop.

\subsection{Proof of Corollary \ref{corst}}

In this section we show that $T^n\times S^{n+1}$ with its canonical contact structure is non-orderable.

Consider $T^{\ast}T^n$ and $\C$ with their canonical Liouville forms $\lambda_{T^n}$ and $\lambda_{\C}=\frac{1}{2}(xdy-ydx)$.
Then the $1$-stabilization of $T^{\ast}T^n$ is given by $T^{\ast}T^n\times\C$ equipped with the Liouville form $\lambda_{T^n}+\lambda_{\C}$.
The Liouville form on $T^{\ast}T^n\times\C$ induces a contact form on its ideal boundary, which is diffeomorphic to $T^n\times S^{n+1}$.
We refer to the resulting contact structure as the standard contact structure on $T^n\times S^{n+1}$.

Consider $T^n\times S^{n+1}$ as a subset of $T^n\times \R^n\times \C$, where $S^{n+1}$ is considered as the unit sphere in $\R^n\times\C\cong \R^{n+2}$.
A contact form for the standard contact structure on $T^n\times S^{n+1} $ is given by the restriction of the $1$-form
$$\lambda=\lambda_{T^n}+\lambda_{\C}=\sum\limits_kx_kd\theta_k+\frac{1}{2}(xdy-ydx).$$
We denote this restriction by $\alpha$.
Here we consider $T^n=(S^1)^n$ with coordinates $(e^{2\pi i\theta_1},\cdots , e^{2\pi i\theta_n})$, $\R^n$ with coordinates $(x_1,\cdots,x_n)$ and $\C$ with coordinate $z=x+iy$.

An explicit open book with page $T^{\ast}T^n$ and monodromy being the identity map can be constructed as follows:
Let 
$$B:=\{(p,x_1,\cdots,x_n,z)\in T^n\times \R^n\times \C|z=0,x_1^2+\cdots+x_n^2=1\}\subset T^n\times S^{n+1}.$$
Then the map
\begin{align*}
    P\colon T^n\times \R^n\times \C^{\ast}&\rightarrow S^1,\\
    (p,x_1,\cdots,x_n,z)&\mapsto \arg(z)
\end{align*}
restricts to a fibration of $T^n\times S^{n+1}\setminus B$ over $S^1$.
The submanifold $B$ admits a neighbourhood diffeomorphic to $D^2\times B$ such that $B=\{0\}\times B$ and $P$ is given by the angular coordinate in $D^2\setminus \{0\}$, i.e., $(B,P)$ defines an open book decomposition of $T^n\times S^{n+1}$.
As in the above coordinates
$$\lambda|_B=\sum\limits_kx_kd\theta_k,$$
the contact form $\alpha$ induces a contact form on $B$ such that $(B,\ker \alpha|_B)$ is contactomorphic to the standard spherical cotangent bundle of $T^n$.

Let $\Sigma=P^{-1}(1)$ be a page of the open book.
Then 
$$\Sigma=\{(p,x_1,\cdots,x_n, z)\in T^n\times\R^n\times (0,1]|x_1^2+\cdots+x_n^2+x^2=1\}.$$
A simple computation in the above coordinates verifies that $d\lambda|_{\Sigma}$ is a symplectic form on $\Sigma$ defining its positive orientation.
Since $d\lambda$ is invariant under rotations in the $\C$-factor this holds for every page of the open book.
Hence $(B,P)$ defines an open book adapted to the standard contact structure on $T^n\times S^{n+1}$.
Note that the map 
\begin{align*}
    f\colon \Sigma&\rightarrow T^n\times B^n(1),\\
    (p,x_1,\cdots,x_n,z)&\mapsto (p, x_1,\cdots, x_n)
\end{align*}
defines an exact symplectomorphism between $\Sigma$ and the unit disc cotangent bundle of $T^n$.
The $1$-form $\lambda$ induces a Liouville form on $\Sigma$ with Liouville vector field
$$Y=\sum\limits_kx_k\partial_{x_k}+\frac{x^2-1}{x}\partial_x.$$
Thus the skeleton of $\Sigma$ is given by the torus 
$$L:=T^n\times \{0\}\times \{1\}\subset T^n\times \R^n\times\C.$$
Note that the image of this torus under the Reeb flow of $\alpha$ is 
$$A:=\bigcup\limits_{t\in\R}\phi_t^{\alpha}(L)=\{(p,x_1,\cdots,x_n,z)\in T^n\times S^{n+1}||z|^2=1\}.$$
The torus $A$ is a pre-Lagrangian toric fibre of the standard contact action of $T^{n+1}$ on $T^n\times S^{n+1}$, see \cite[Section 4.2]{Marincovic16} and can be displaced from itself by a contactomorphism due to \cite[Theorem 1.2]{Marincovic16}.
Hence Theorem \ref{thm2} implies the existence of a positive contractible loop.

\subsection{Proof of Corollary \ref{corekp}}\label{sec:ekp}

Let $W$ be a Liouville manifold and $(M,\xi)$ be the ideal boundary of the $2$-stabilization $W\times \C^2$.
Clearly $(M,\xi)$ has a supporting open book decomposition $(K,\theta)$ and an ideal Giroux form, such that the pages are exact symplectomorphic to $W\times\C$, and whose monodromy is isotopic to the identity, see e.g. \cite{Giroux20} or the Proof of Corollary \ref{corst}.

Hence we can choose a contact form $\alpha$ whose Reeb flow is periodic near the skeletons of each page; in particular the image of a skeleton under the Reeb flow is given by the union of all skeletons.
Due to Lemma \ref{lem_ob_pages}, it suffices to displace the skeleton of $W\times \C$ in $(-\epsilon,\epsilon)\times W\times\C$ equipped with the contact form $\alpha=dt+\lambda$, where $\lambda$ denotes the Liouville form on $ W\times\C$.

Note that the skeleton of a $1$-stabilization $W\times\C$ is given by $L\times\{0\}$, where $L$ is the skeleton of $W$.
In particular, the image of a skeleton under the Reeb flow is $(-\epsilon,\epsilon)\times L\times\{0\}$.

The skeleton can be displaced from this image by the lift of a translation in the $\C$ factor to $(-\epsilon,\epsilon)\times W\times\C$:
Let $(x,y)$ be coordinates on $\C$ and consider the contact Hamiltonian $h(t,p,x,y)=2y$.
Then $h$ induces the contact vector field $-2\partial_x+y\partial_t$ whose flow displaces the skeleton  $\{0\}\times L\times\{0\}$.
Cutting out $h$ outside of a ball around the skeleton yields a compactly supported contactomorphism displacing the skeleton from its image under the Reeb flow.

The result follows from Theorem \ref{thm2}.

\subsection{Proof of Theorem \ref{thm:perreeb}}

\begin{lemma}\label{lem_contract_loose}
Let $L$ be a loose Legendrian with loose chart $V$ around a point $p\in L\cap V$.
Furthermore, let $U\subset V$ be an open neighbourhood of $p$ and $W$ a Darboux ball with $\overline{V}\subset W$.
Then there exists a contactomorphism $\phi\in\mathcal{G}$ compactly supported in $W$ such that $\phi(V)\subset U$ and $\phi(L)\subset L\cup U$.
In particular $\phi(L)$ is a loose Legendrian with loose chart $\phi(V)$.
\end{lemma}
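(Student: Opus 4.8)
The plan is to produce $\phi$ as the time-$T$ map of a contact flow supported in $W$ that contracts $V$ into $U$ and is tangent to $L$ at every point of $L$ lying outside $U$. The organizing device is the elementary criterion that a contact vector field $X_H$ (with $\alpha(X_H)=H$ for the ambient contact form $\alpha$) is tangent to $L$ along $L$ if and only if $H|_L\equiv 0$: indeed $X_H|_L\in\xi$ forces $H|_L=0$, and conversely $H|_L=0$ gives $\iota_{X_H}d\alpha|_{TL}=(R\cdot H)\,\alpha|_{TL}-dH|_{TL}=-d(H|_L)=0$, so $X_H|_L\in(TL)^{\perp_{d\alpha}}\cap\xi=TL$ because $L$ is Legendrian. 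Granting a Hamiltonian $H$, supported in $W$, with $H|_L=0$ outside $U$ and whose flow $\phi_t$ contracts $V$ into a forward-invariant $U$, one obtains the lemma: a trajectory starting on $L$ stays on $L$ as long as it remains outside $U$ (tangency), and once it enters $U$ it stays there (forward-invariance), so $\phi:=\phi_T$ satisfies $\phi(L)\subset L\cup U$, while $\phi(V)\subset U$ holds by construction. The final assertion is then automatic, since a contactomorphism carries the loose chart $(V,L\cap V)$ to a loose chart $(\phi(V),\phi(L)\cap\phi(V))$ for $\phi(L)$.

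It remains to construct $H$. First I would identify $W$ with a ball in standard $(\R^{2n+1},\xi_{\mathrm{std}})$ sending $p$ to the origin, and take $H_0$ to be the Hamiltonian of the negative contact scaling, whose flow contracts every compact subset of $W$ to the origin and hence sweeps $\overline V$ into $U$ in finite time, with small balls about $p$ forward-invariant. After shrinking $U$ (it suffices to prove the statement for a smaller neighbourhood of $p$, since $\phi(V)\subset U'\subset U$ and $\phi(L)\subset L\cup U'\subset L\cup U$), I would use the Legendrian neighbourhood theorem to place $L$ as the flat zero section on a chart $N\subset W$ around $p$ with $U\subset N$; there $H_0$ already vanishes on $L$. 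Away from $p$ one corrects $H_0$ to a Hamiltonian $H$ that still vanishes on $L$, by subtracting an extension of $H_0|_L$ and adding a normal squeeze along $L$ (a Hamiltonian vanishing to first order on $L$ whose normal Hessian is contracting), arranged so that a tubular neighbourhood of $\overline{L\cap V}$ together with the ambient Darboux contraction off $L$ still carries all of $V$ into $U$. Finally one multiplies by a cut-off function supported in $W$ and equal to $1$ along the relevant trajectories; since multiplication by a function leaves $H|_L=0$ intact, tangency to $L$ outside $U$ is preserved.

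The main obstacle is the interaction of the two requirements exactly where $L$ is not flat. The interior of the loose chart $V$ contains the zig-zag and cannot be flattened, so the contraction toward $p$ genuinely moves this part of $L$ off itself — which is permissible only because it is dumped into the sink $U$ — whereas on the collar where $L$ exits $V$ the flow must stay tangent to $L$. Reconciling these is precisely what the condition $H|_L=0$ outside $U$ encodes, and the delicate point is to keep the off-$L$ contraction strong enough to pull the fat, non-cylindrical chart $V$ into $U$ while its restriction to $L$ degenerates to a tangential push compressing $L\cap V$ into $L\cap U$. I expect to carry this out by prescribing the $2$-jet of $H$ along $L$ (vanishing value, tangentially contracting normal $1$-jet, normally contracting Hessian) and assembling $H$ by a partition of unity, which needs no global flat coordinates and therefore accommodates the non-flatness of the loose chart.
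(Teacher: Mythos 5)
Your proposal is correct in outline but takes a genuinely different route from the paper. The paper works entirely in the normalized model ($L$ equal to the flat Legendrian $\{y=z=0\}$ outside $V$) and takes the completely explicit Hamiltonian $h=f(x,y,z)(-2z+xy)$, the cut-off of the anisotropic scaling $(x,y,z)\mapsto(e^{-t}x,e^{-t}y,e^{-2t}z)$; the only tangency it needs comes for free from the algebraic accident that $-2z+xy$ vanishes identically on $\{y=z=0\}$, and a two-line computation of the components of $X_h$ shows this invariance survives multiplication by any cutoff $f$. On $V$ itself the paper's flow is the literal scaling, which moves the zig-zag part of $L$ \emph{off itself} but dumps it into $U$ for $T$ large. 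You instead impose $H=0$ on all of $L\setminus U$ --- including the zig-zag --- via your (correctly stated and correctly proved) tangency criterion, which forces you to correct the scaling Hamiltonian near the non-flat part by flattening, jet prescriptions along $L$, and a partition of unity. One sentence of your ``main obstacle'' paragraph is inconsistent with your own setup: under $H|_{L\setminus U}=0$ the zig-zag is never moved off itself, it slides along $L$ until it enters the sink (it is the \emph{paper's} flow that moves it off $L$); this is a wording confusion, not a flaw. What your heavier route buys is robustness: $\phi_t(L)\subset L\cup U$ for \emph{every} $t$ once $U$ is forward-invariant, with no reliance on the model normalization of $L$ outside $V$, and with automatic control of points of $L\setminus V$ that drift into $V$ at late times (a point the paper's ``for $t$ large enough'' treats rather quickly). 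What it costs is that the crucial quantitative step --- that the corrected field, tangent to $L$ everywhere outside $U$, still sweeps the fat, non-cylindrical chart $\overline{V}$ into $U$ in finite time without creating new rest points or recurrence in $V\setminus U$ --- remains a sketch; in the paper this step is the trivial observation that the pure scaling contracts the star-shaped $V$ into $U$. If you complete your construction with a Lyapunov-type function guaranteeing uniform entry of $\overline{V}$ into a forward-invariant $U'\subset U$ (compactness then gives the uniform time $T$), your argument closes, and the final assertion about $\phi(V)$ being a loose chart for $\phi(L)$ follows exactly as you say.
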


\begin{proof}
Without restriction, we can assume that $L\subset \R^{2n+1}$ equipped with the contact form $\alpha:= dz-ydx$.
Note that for simplicity, we omit the indices of the coordinates $(x_i,y_i)$, where $1\leq i\leq n$.
Assume further that $p=0$ and that $U, W$ are balls of radius $R_1< R_2$, respectively.
By the definition of a loose chart, we can further assume that outside of $V$ the Legendrian $L$ coincides with the set $\{z=y=0\}$.

Look at the contact Hamiltonian $h=f(x,y,z)(-2z+xy)$, where $f=1$ on $V$ and $f=0$ outside of $W$.
It generates a contact vector field of the form
$$X_h=(h+yg_1(x,y,z))\partial_z+g_1(x,y,z)\partial_x+g_2(x,y,z)\partial_y.$$
Since $d\alpha(X,\cdot)+dh=\partial_zh\alpha$, a simple computation shows that
$$g_2=\partial_xh+y\partial_zh=\partial_xf(-2z+xy)+fy+y\partial_zh.$$
In particular it follows that $g_2=0=h+yg_1(x,y,z)$ on $\{y=z=0\}$, i.e., the flow of $X_h$ preserves $L$ outside of $V$.
On $V$ the flow of $X$ coincides with the scaling map $t\mapsto (e^{-t}x,e^{-t}y,e^{-2t}z)$ and squeezes $V$ into $U$ for $t$ large enough.
\end{proof}

Write $f_t = \phi^{\alpha}_{t}$ for $t\geq 0$ and $\mathbb{L}'$ for the image of $L^{n-1}$ under the Reeb flow.
We start by suitably adjusting the open book to $f_t$ as follows.

\begin{lemma}\label{lem_ob_loose}
Let $(K,\theta)$ be the open book from Theorem \ref{thm:perreeb} and let $f_t$ be as above. Let $L$ be the skeleton of one of the pages of the open book (it is a loose isotropic complex).
There exists an equivalent open book decomposition $(\hat{K},\hat{\theta})$ with skeleton $\hat{L}$ of a page such that $\hat{L}^{n-1}=L^{n-1}$, and each connected component $\hat{L}_i^{\mathrm{crit}}$ admits a loose chart $V_i$ with $V_i\cap \hat{\mathbb{L}}'=\emptyset$. In particular, the image of $\hat{L}^{n-1}$ under the Reeb flow of $\alpha$ is still $\hat{\mathbb{L}}'=\mathbb{L}'$.
Moreover, one can assume that $V_i\cap V_j=\emptyset$ for $i\neq j$.
\end{lemma}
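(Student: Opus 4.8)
The plan is to realize the new open book as the image of $(K,\theta)$ under a single co-orientation preserving contactomorphism $\Phi \in \mathcal{G}$ that is the identity near $L^{n-1}$. Given such a $\Phi$, I would set $\hat{K} = \Phi(K)$ and $\hat{\theta} = \theta \circ \Phi^{-1}$, which is by definition an open book equivalent to $(K,\theta)$; its pages are $\Phi(\theta^{-1}(c))$ and the skeleton of the page $\Phi(W)$ is $\hat{L} = \Phi(L)$. Since $\Phi$ fixes a neighbourhood of $L^{n-1}$ pointwise, the filtration $L^0 \subset \cdots \subset L^{n-1} \subset \Phi(L)$ shows $\hat{L}^{n-1} = L^{n-1}$, so $\hat{L}^{\mathrm{crit}} = \Phi(L^{\mathrm{crit}})$ with components $\hat{L}_i^{\mathrm{crit}} = \Phi(L_i^{\mathrm{crit}})$. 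As $\alpha$ and its Reeb flow are unchanged, the equality $\hat{L}^{n-1} = L^{n-1}$ immediately forces $\hat{\mathbb{L}}' = \mathbb{L}'$, which is the ``in particular'' clause. It thus remains to build $\Phi$ so that each $\Phi(L_i^{\mathrm{crit}})$ carries a loose chart $V_i$, with the $V_i$ pairwise disjoint and disjoint from $\mathbb{L}'$.

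I would build $\Phi = \Phi_1 \circ \cdots \circ \Phi_k$ component by component, with pairwise disjoint supports contained in $M \setminus L^{n-1}$. For a fixed component, the definition of a loose isotropic complex makes $L^{\mathrm{crit}} = L \setminus L^{n-1}$ loose in $M \setminus L^{n-1}$, so $L_i^{\mathrm{crit}}$ admits a loose chart $V_i^0$ about some point $q_i$, loose in the complement of the other components. First I would apply Lemma \ref{lem_contract_loose} to shrink $V_i^0$ into an arbitrarily small loose chart of the image Legendrian about $q_i$, by a contactomorphism supported in a Darboux ball about $\overline{V_i^0}$. Next I would displace this small chart off $\mathbb{L}'$: because the Reeb flow is periodic near $L^{n-1}$, the set $\mathbb{L}'$ is compact of dimension at most $n$, hence of codimension at least $n+1 \geq 3$ in $M^{2n+1}$, so $M \setminus \mathbb{L}'$ is open and dense. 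I can therefore choose $p_i \notin \mathbb{L}'$ arbitrarily near $q_i$ and a contact isotopy, supported in a small ball away from $L^{n-1}$, carrying the shrunk chart to a loose chart $V_i$ of the resulting Legendrian about $p_i$; taking $V_i$ small enough (again by Lemma \ref{lem_contract_loose} if needed) gives $V_i \cap \mathbb{L}' = \emptyset$. Letting $\Phi_i$ be the composition of these two contactomorphisms and keeping all data for distinct indices supported in disjoint balls that avoid $L^{n-1}$ and the other components, disjointness of supports yields $\Phi(L_i^{\mathrm{crit}}) = \Phi_i(L_i^{\mathrm{crit}})$, loose with chart $V_i$, and the $V_i$ pairwise disjoint.

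The hard part will be the simultaneous bookkeeping of supports, rather than any single estimate: I must position the shrunk loose charts around points $p_i \notin \mathbb{L}'$ and realize all motions by contactomorphisms whose supports are Darboux balls that are pairwise disjoint and disjoint from $L^{n-1}$. This is exactly where the hypotheses are used — periodicity of the Reeb flow near $L^{n-1}$ guarantees that $\mathbb{L}'$ is closed of dimension at most $n$, so its codimension $\geq 3$ (using $n \geq 2$) makes the displacement of the small charts off $\mathbb{L}'$ achievable by arbitrarily $\mathcal{C}^0$-small contact isotopies. Disjointness of the supports is then straightforward after shrinking, since the components $L_i^{\mathrm{crit}}$ are closed and pairwise disjoint in the open manifold $M \setminus L^{n-1}$ and each $p_i$ lies at positive distance from $L^{n-1}$.
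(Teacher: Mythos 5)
Your proof is correct, and its scaffolding matches the paper's: both construct the equivalent open book as $(\Phi(K),\theta\circ\Phi^{-1})$ for a product $\Phi=\Phi_1\cdots\Phi_k$ of contactomorphisms, one per component of $L^{\mathrm{crit}}$, each built from Lemma \ref{lem_contract_loose} with support away from $L^{n-1}$ and the other components, so that $\hat{L}^{n-1}=L^{n-1}$ and hence $\hat{\mathbb{L}}'=\mathbb{L}'$. The genuine difference is in how the chart is taken off $\mathbb{L}'$. The paper invokes \cite[Proposition 3.11]{Courte18}, which uses the periodicity of the Reeb flow near $L^{n-1}$ to show that $\mathbb{L}'\cap L$ contains no subset open in $L$; this produces a point $p_i\in U_i\cap L_i^{\mathrm{crit}}$ \emph{on the Legendrian itself} with a small ball $U_i'$ around it disjoint from the closed set $\mathbb{L}'$, so a single application of Lemma \ref{lem_contract_loose} per component suffices, with no displacement step. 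You instead argue softly in the ambient manifold: periodicity makes $\mathbb{L}'$ the compact image of a smooth map from a set of dimension at most $n$, hence closed with dense complement in $M^{2n+1}$, and you then insert an auxiliary $\mathcal{C}^0$-small contact isotopy carrying the shrunk chart to a generic nearby point $p_i\notin\mathbb{L}'$ before shrinking once more. Your route avoids the external citation at the cost of one extra isotopy and slightly heavier support bookkeeping; the paper's route is shorter and keeps the chart centered on the original Legendrian. Two minor remarks: your codimension-$\geq 3$ observation is an over-claim of necessity, since you only ever pick a nearby point in the open dense complement of a closed nowhere dense set, for which codimension $\geq 1$ together with closedness suffices; and you correctly identify that periodicity is what makes $\mathbb{L}'$ compact (hence closed) — without it $\bigcup_t\phi_t^{\alpha}(L^{n-1})$ need not be closed and the argument would break, which is exactly the role the hypothesis plays in the paper as well.
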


\begin{proof}
Since the Reeb flow is periodic near $L^{n-1}$, \cite[Proposition 3.11]{Courte18} implies that the intersection of $\mathbb{L}'$ with $L$ does not contain any sets that are open in $L$.
Therefore, for each connected component $L_{i}^{\mathrm{crit}}, i\in\{1,\cdots,k\}$ of the critical locus $L^{\mathrm{crit}}$ of $L,$ we can find a loose chart $U_i$ and a point $p_i\in U_i \cap L_{i}^{\mathrm{crit}}$ such that an open ball $U_i'$ around $p_i$ with closure contained in $U_i$ does not intersect $\mathbb{L}'$.
Moreover, we can assume that $U_i\cap U'_j=\emptyset$ for all $i\neq j.$ For each $i,$ Lemma \ref{lem_contract_loose} provides us with a contactomorphism $\phi_i\in \mathcal{G}$ of $M$ compactly supported in an arbitrarily small neighbourhood of $U_i$ inside the complement of $L^{n-1}$ such that $V_i:=\phi_i(U_i)\subset U_i'$ and $\phi_i(L_i^{\mathrm{crit}}) \cap L_j^{\mathrm{crit}} = \emptyset$ for all $j \neq i.$ Note that one can assume that $\phi_i$ is supported away from $L_{j}^{\mathrm{crit}}$ for $j\neq i$.
Consider $\phi:= \phi_1\cdots\phi_k$.
Then $\phi(L)$ is a loose isotropic complex with disjoint loose charts $V_i$ and $(\phi(L))^{n-1}=L^{n-1}$.
In particular, the charts $V_i$ are disjoint from  $\mathbb{L}'$.
Set $(\hat{K},\hat{\theta})=(\phi(K),\theta\circ \phi^{-1})$.
\end{proof}

By Lemma \ref{lem_ob_loose} we can assume that the open book has pages $W_{\pm}$ with skeletons $L_{\pm}$ such that $\mathbb{L}_-'$ does not intersect the loose charts $V_i$ of the connected components $L_{-,i}^{\mathrm{crit}}$.
Set $V = \bigcup_i V_i$ and define the size of $V$ to be $C(V) = \max_i C(V_i),$ where $C(V_i) = \sup |\widetilde{\theta}|_{\al}$ over all $\widetilde{\theta} \in \widetilde{\mathcal{G}}(V_i).$

Now, following the arguments from Sections \ref{sec:pf thm1i} and \ref{sec:ekp}, there exists a neighbourhood $U$ around $\mathbb{L}'$, disjoint from $V$, and a Hofer-small perturbation $f'_t = g_t f_t$ of $f_t$ with $|\tilde{g}_t|_{\al}<\delta$ and $\mathrm{supp}(g_t) \subset U$ such that $f'_t(L_-^{n-1}) \cap L_+ = \emptyset$ for all $t.$ Hence, we may write $f'_t$ for all $t$ as \[f'_t = f^-_t f^+_t\] where $\mathrm{supp}(f^{+}_t) \subset (M \setminus L_{+}) \cap U$ and $\mathrm{supp}(f^{-}_t) \subset M \setminus L^{n-1}_{-}$ for all $t.$ In particular, as $U \cap V = \emptyset,$ we have that on $V,$ $f_t = f'_t = f^-_t,$ which in particular is {\em strict} there and \[d_{\al}(\tilde{f}_t,\tilde{f}_t^-) < \delta+C_+\] where $C_{\pm} = \sup |\widetilde{\theta}|_\al < \infty$ over all $\widetilde{\theta} \in \widetilde{\mathcal{G}}(M \setminus L_{\pm})$.
Furthermore, $f^-_t = \id$ on a neighbourhood $U' \subset U$ of $L^{n-1}_-.$ Set $\phi_t = f^-_t.$ 

Applying the argument of \cite{Nakamura21}, which readily extends to the relative case, we see that for every $\delta>0$ there exists an isotopy $\psi_t$ with $\psi_t = \id$ on $U',$ $\psi_1(L) = \phi_1(L),$ $\psi_t(L)$ is $\delta$ close in the $C^0$-metric to $\phi_t(L),$ and finally \[|\tilde{\psi}_1|_{\al} < C(V) + C(\phi_1(V))+\delta = 2 C(V) + \delta.\] Here we used the fact that $\phi_1$ is strict on $V.$  

Finally, for $\delta$ sufficiently small, $\phi_t^{-1} \psi_t(L_-) \cap L_+ = \emptyset$ for all $t,$ whence \[d_{\al}(\tilde{\phi}_1, \tilde{\psi}_1) = |\tilde{\phi}_1^{-1}\tilde{\psi}_1|_{\al} \leq C_++C_-.\] In total we estimate \[ |\tilde{f}_t|_{\al} \leq |\tilde{\phi}_1|_{\al}+\delta+C_+ \leq |\tilde{\psi}_1|_{\al} + \delta+ 2C_++C_- \leq 3\delta+C(V)+2C_++C_-,\] a bound independent of $t.$
Therefore the contact Hofer norm on $\widetilde{\mathcal{G}}$ is bounded along the one-parameter subgroup given by the Reeb flow of $\alpha.$
In particular $(M,\xi)$ is not orderable. 

\subsection{Proof of Theorem \ref{thm1stab}}\label{sec:proof subcrit}
The skeleton $L$ of a Weinstein domain $W$ is an exact Lagrangian complex. We consider it as part of the page $W$ of the standard open book on $M=\partial_{\infty}(W \times \C).$ As such, it is a Legendrian complex. Furthermore, as the monodromy of this open book is trivial, it follows immediately from e.g. \cite[Section 2.11]{vankoert} that there is a contact form $\alpha$ on $M$ that is periodic near $L.$ Furthermore, as $W$ is obtained by attachment of $N$ critical Weinstein $n$-handles to the subcritical subdomain $W'$ of $W,$ the co-cores of the handles provide disjoint Lagrangian disks $D_i,$ $1 \leq i \leq N,$ each one of which intersects transversely $L$ at precisely one point. Moreover the cores of the handles correspond to the components $L^{n}_i,$ $1 \leq i \leq N,$ of $L^n = L \setminus L^{n-1},$ and $D_i \cap L^n_i = D_i \cap L.$ It follows from \cite[Proposition 2.9]{Casals19} that $L$ is a loose Legendrian complex in $M.$ Finally, by Theorem \ref{thm:perreeb}, we obtain that $M$ is not orderable.

\section{Examples and applications}

\subsection{The contact Hofer norm and the $\mathcal{C}^0$-topology}

Another consequence of Proposition \ref{prophandle} is that the contact Hofer norm can be bounded along $\mathcal{C}^0$ small isotopies.

\begin{cor}
Let $(M,\ker\alpha)$ be a closed contact manifold.
Then there exists a $\mathcal{C}^0$-neighbourhood $V$ of $\mathrm{id}$ in $\mathcal{G}$ such that $|\cdot|_{\alpha}$ is bounded on the path-connected component $V'$ of $V$ containing $\mathrm{id}$. Moreover, the Hofer norm is bounded on the image $\widetilde{V}' \to \widetilde{\mathcal{G}}.$
\end{cor}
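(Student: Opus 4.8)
The plan is to reduce the statement to Proposition \ref{prophandle} by means of the fragmentation Lemma \ref{lemfragmentation}, exploiting that a $\mathcal{C}^0$-small contactomorphism cannot push the skeleton of one page across the skeleton of a second one. First I would fix an open book decomposition $(K,\theta)$ supporting $(M,\xi)$, choose two distinct pages $W_\pm$ with skeletons $L_\pm$, and an auxiliary Riemannian metric on $M$. Since $L_-$ and $L_+$ are disjoint compact subsets of $M$, they are separated by a positive distance $d := \mathrm{dist}(L_-,L_+) > 0$. I then take $V$ to be the $\mathcal{C}^0$-ball $\{\phi \in \mathcal{G} : \max_{x\in M}\mathrm{dist}(\phi(x),x) < d/2\}$ and let $V'$ be its path-connected component containing $\mathrm{id}$.

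The key point is a uniform displacement along any connecting path. Given $\phi \in V'$, I would choose a contact isotopy $(f_t)_{t\in[0,1]}$ from $\mathrm{id}$ to $\phi$ with $f_t \in V'$ for all $t$. Then each $f_t$ moves points by less than $d/2$, so $f_t(L_-)$ lies in the $d/2$-neighbourhood of $L_-$, whence $f_t(L_-) \cap L_+ = \emptyset$ for all $t$. Applying Lemma \ref{lemfragmentation} with $K_\pm = L_\pm$ yields a factorization $f_t = f_t^- \circ f_t^+$ with $f_t^-$ compactly supported in $M \setminus L_-$ and $f_t^+$ compactly supported in $M \setminus L_+$. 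On the universal cover the pointwise product path is homotopic rel endpoints to the concatenation, so it represents $\tilde f_1 = \tilde f_1^-\,\tilde f_1^+$; by the triangle inequality and Proposition \ref{prophandle} applied to both skeleton-complements $M\setminus L_-$ and $M\setminus L_+$ one obtains
$$|\tilde f_1|_\alpha \leq |\tilde f_1^-|_\alpha + |\tilde f_1^+|_\alpha \leq C_- + C_+ =: C,$$
where $C_\pm$ are the bounds furnished by Proposition \ref{prophandle} and depend only on the open book, not on $\phi$. This bounds $|\phi|_\alpha \leq C$ on $V'$. Since any class in the image of $\widetilde{V}' \to \widetilde{\mathcal{G}}$ is represented by a path lying in $V' \subset V$, the identical computation bounds its pseudo-norm by $C$, giving the second assertion.

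The main obstacle is the uniform control over the entire path rather than the endpoint alone: the fragmentation lemma applies only to an isotopy that disjoins $L_-$ from $L_+$ at every time, whereas two contactomorphisms that are $\mathcal{C}^0$-close need not, in general, be joined by a $\mathcal{C}^0$-small isotopy. This is exactly why the statement is phrased through the path-connected component $V'$ of a small $\mathcal{C}^0$-ball; working inside $V'$ guarantees a connecting isotopy that remains $\mathcal{C}^0$-small, and the positivity of $\mathrm{dist}(L_-,L_+)$ then delivers the required disjointness uniformly in $t$. A minor point I would need to record is that $\phi \in V'$ may indeed be joined to $\mathrm{id}$ by a genuine contact isotopy staying inside $V$, which follows from the local contractibility of $\mathcal{G}$ in the $\mathcal{C}^0$-topology.
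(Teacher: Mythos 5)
Your argument is essentially the paper's: fix two pages of a supporting open book with skeletons $L_\pm$, choose $V$ so small that any $\phi$ in the identity path-component admits a connecting isotopy $(f_t)$ with $f_t(L_-)\cap L_+=\emptyset$ for all $t$, fragment via Lemma \ref{lemfragmentation}, and bound both factors (and the resulting class in $\widetilde{\mathcal{G}}$, since the factorization holds at the level of paths) by Proposition \ref{prophandle} applied to the two skeleton-complements. Your quantitative choice $V=\{\phi : \max_x \dist(\phi(x),x)<d/2\}$ with $d=\dist(L_-,L_+)$ merely makes explicit what the paper leaves as ``choose $V$ small enough'', and your treatment of the image $\widetilde{V}'\to\widetilde{\mathcal{G}}$ is likewise identical to the paper's remark that the bound holds for the homotopy class of the path with fixed endpoints.

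One caveat: your closing claim that a connecting contact isotopy inside $V$ exists ``by local contractibility of $\mathcal{G}$ in the $\mathcal{C}^0$-topology'' is not available. The paper poses local path-connectedness of $\mathcal{G}$ in the $\mathcal{C}^0$-topology as an open question immediately after this corollary, noting that the analogous statement (due to Seyfaddini) is known only for $\mathrm{Ham}_c$ of the standard ball and of compact surfaces; so you may not invoke it. Fortunately, no such input is needed: the statement quantifies only over the path-connected component $V'$ of $V$, where paths are to be understood as contact isotopies lying in $V$ --- a reading forced in any case by Lemma \ref{lemfragmentation}, which applies to contact isotopies and not to merely continuous $\mathcal{C}^0$-paths. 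With that reading, membership in $V'$ supplies the required isotopy by definition, your ``minor point'' evaporates, and the proof is complete and coincides with the paper's.
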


Note that this statement is in striking contrast to the case of the group $\mathrm{Ham}(M,\om)$ Hamiltonian diffeomorphisms of symplectic manifolds. It is known, due originally to an argument of Ostrover, that there are autonomous Hamiltonian flows with arbitrarily small support with linearly growing Hofer norm in the universal cover $\widetilde{\mathrm{Ham}}(M,\om)$ of $\mathrm{Ham}(M,\om)$ for every closed symplectic manifold $(M,\om).$ This often produces linear growth in the group $\mathrm{Ham}(M,\om)$ itself, for instance whenever $(M,\om)$ is symplectically aspherical. We refer to \cite[Section 6.3.1]{PolterovichRosen} for details.

\begin{proof}
Let $(K,\theta)$ be an open book decomposition and $L_{\pm}$ be the skeletons of two different pages.
We can choose $V$ small enough such that for any $\phi\in V$ in the path-connected component of the identity, there exists a path $\phi_t$ in $V$ with $\phi_0=\mathrm{id}$ and $\phi_1=\phi$ such that $\phi_t(L_-)\cap L_+=\emptyset$ for all $t\in[0,1]$.
By Lemma \ref{lemfragmentation} we can decompose $\phi_t=f^-_tf^+_t$, where $f_t^{\pm}$ are compactly supported in $M\setminus L_{\pm}$.
Then Proposition \ref{prophandle} implies that the contact Hofer norm of any such $\phi$ is bounded from above independent of $\phi$ and in fact the same is true for the homotopy class of the path $\{\phi_t\}$ with fixed endpoints.
\end{proof} 

Note that it is not known if for every $\mathcal{C}^0$-small contactomorphism there exists a $\mathcal{C}^0$ small path connecting it to the identity. Therefore we pose the following questions.

\begin{qu}
Is $\mathcal{G}$ with the $\mathcal{C}^0$-topology locally path-connected?
\end{qu}

Note that in the symplectic case, it is known that the group $\mathrm{Ham}_c(M,\om)$ of Hamiltonian diffeomorphisms with compact supports is locally path connected in the $\mathcal{C}^0$ topology for $(M,\om)$ the standard symplectic open ball and compact symplectic surfaces, see \cite{Seyfaddini13}.

\begin{qu}
Is it true that the contact Hofer norm is bounded for $\mathcal{C}^0$-small contactomorphisms (not necessarily connected to $\mathrm{id}$ with a $\mathcal{C}^0$-small path)?
\end{qu}

\subsection{Translated points}\label{sec:transl}

Let $\alpha$ be a contact form on a closed manifold $M$ and $\phi\in \mathcal{G}$.
Following Sandon \cite{Sandon12}, we call a point $x\in M$ a \textbf{translated point} of $\phi$, if $(\phi^{\ast}\alpha)_x=\alpha_x$ and $\phi(x)=\phi_t^{\alpha}(x)$ for some $t\in \R$.

A first relation between translated points and the contact Hofer norm was found in \cite[Theorem B]{Egor}, where it was proved that in the case of Liouville fillable contact manifolds, any contactomorphism whose distance to the Reeb flow is sufficiently small, admits a translated point. In \cite{Cant24} it is shown that on the standard contact sphere this result is sharp, i.e. that there exist contactomorphisms without translated points and contact Hofer norm arbitrarily close to the bound in \cite{Egor}.

Note that the tools for constructing contactomorphisms without translated points introduced by Cant \cite{Cant22} are very similar to the tools used in the proof of Theorem \ref{thm1}. In particular, the flow constructed in Lemma \ref{lem_ob_contraction} has similar focal properties as the contactomorphisms used in \cite{Cant22} and \cite{Cant24}.

In the spirit of Theorem \ref{thm1} one can show:

\begin{theorem}\label{thm:notr}
 Let $(M,\xi)$ be a closed contact manifold that admits an open book decomposition $(K,\theta)$ whose pages are $1$-stabilizations of a Liouville manifold with respect to some ideal Giroux form $\beta$.
 Let $\alpha$ be a contact form whose Reeb flow coincides with the $S^1$-fibration $\theta$ in a neighbourhood around the skeletons of the pages.
 Then there exists a contactomorphism without translated points for $\alpha$.
\end{theorem}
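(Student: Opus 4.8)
The plan is to construct $\phi$ as a composition $\phi = g \circ \psi_T$, where $\psi_T$ is the time-$T$ map of the contracting flow of Lemma \ref{lem_ob_contraction} (for a large parameter $T$) associated with two pages with skeletons $L_\pm$, and $g$ is a compactly supported displacement in the $\C$-direction of the stabilization, built exactly as in the proof of Corollary \ref{corekp}. The two factors are aimed at the two defining conditions of a translated point $x$ of $\phi$, namely $(\phi^{\ast}\alpha)_x = \alpha_x$ together with $\phi(x) = \phi^{\alpha}_{\tau}(x)$ for some $\tau$. The strong conformal compression of the focal flow $\psi_T$ will violate the first condition on the bulk of $M$, while the $\C$-translation together with the Liouville character of $\psi_T$ near the skeletons will violate the second condition where the first one can hold.

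First I would fix the model. Using Lemma \ref{lem_ob_pages} and the $1$-stabilization hypothesis, identify neighbourhoods $U_\pm$ of $L_\pm$ with $(-\epsilon,\epsilon)_s \times W_0 \times \C$ carrying $\alpha = ds + \lambda_0 + \lambda_\C$, so that the skeletons become $L_\pm = \{s=0\}\times \mathrm{Skel}(W_0) \times \{0\}$ and the Reeb field is $\partial_s$. By the hypothesis that the Reeb flow coincides with $\theta$ near the skeletons (and that the monodromy is trivial up to isotopy), the Reeb orbit through a point near a skeleton preserves the page coordinate $(w,z) \in W_0 \times \C$. I would then take $g$ to be the time-one flow of the cut-off contact Hamiltonian $2y$ as in Corollary \ref{corekp}, so that on a neighbourhood of $L_+$ containing the focal image $\psi_T(M\setminus U_-)$ it translates the $\C$-coordinate by a fixed nonzero amount $c$.

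The verification splits $M$ according to the $\alpha$-conformal factor of $\phi$. For $T$ large, points whose $\psi_T$-image lies deep in the contraction zone near $L_+$ (where $\psi_T$ acts by the inverse Liouville flow) accumulate a very negative conformal factor; since the bounded factor of $g$ cannot compensate, one has $\phi^{\ast}\alpha \neq \alpha$ there, ruling out translated points. The set where the factor equals $1$ then shrinks into an arbitrarily small neighbourhood of $L_-$ as $T \to \infty$. On that set, \emph{off} the skeleton, the Liouville component of $\psi_T$ moves the page coordinate $w$ away from its initial value, whereas the Reeb orbit of $x$ keeps $(w,z)$ fixed; hence $\phi(x)$ cannot lie on the Reeb orbit of $x$. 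On the skeletons themselves, which $\psi_T$ fixes, the factor $g$ shifts the $z$-coordinate by $c \neq 0$, again off the $z$-preserving Reeb orbit. Together these cover all of $M$.

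The main obstacle is to make this case division uniform and, crucially, to respect the \emph{global} nature of Reeb orbits: since the orbit of $x$ is a full circle passing through all pages, one must guarantee that $\phi(x)$ misses the \emph{entire} orbit and not merely its portion in a single chart. This requires quantitative control linking the contraction time $T$, the support and the magnitude $c$ of $g$, and the $\mathcal{C}^0$-size of the focal image $\psi_T(M \setminus U_-)$, so that the $\C$-displacement strictly exceeds the residual $\C$-extent of the focused region. The delicate interface is the thin shell near $L_-$ where the conformal factor of $\psi_T$ crosses $1$; there the argument must show that the Liouville displacement of the page coordinate is strictly nontrivial away from the skeleton, which is precisely the kind of non-periodicity input (compare \cite[Proposition 3.11]{Courte18}, used in Lemma \ref{lem_ob_loose}) provided by the hypothesis that the Reeb flow agrees with $\theta$, hence is periodic, near the skeletons.
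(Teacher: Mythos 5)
Your overall strategy (focal flow of Lemma \ref{lem_ob_contraction} $+$ conformal-factor analysis $+$ the $\C$-translation from the proof of Corollary \ref{corekp} $+$ periodicity of Reeb orbits near the skeletons) matches the paper's ingredients, but your treatment of the conformal-factor-$1$ shell contains a genuine gap. The claim that ``off the skeleton, the Liouville component of $\psi_T$ moves the page coordinate $w$ away from its initial value'' is false, precisely at the points you flag as delicate. Take $x$ on an \emph{intermediate} page $\theta^{-1}(\theta_0)$, $\theta_0\in(0,\pi)$, with page coordinate on the skeleton --- say over a zero $p_0$ of the Liouville vector field of the page, with $z=0$. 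There the Liouville component of the vector field $X$ of Lemma \ref{lem_ob_contraction} vanishes, $X$ reduces to its $\sin\theta\, R_\beta$-part, and $\psi_t$ moves $x$ purely in the $\theta$-direction while fixing the page coordinate $(p_0,0)$. Since the Reeb flow is periodic near the union of the skeletons, the Reeb orbit of $x$ is exactly the circle through all pages at fixed $(p_0,0)$ --- which \emph{contains} $\psi_T(x)$. Moreover, by an intermediate value argument in $\theta_0$ (the conformal factor of $\psi_T$ interpolates between $e^{T}$ at $L_-$ and $e^{-T}$ deep in the focal zone), the factor-$1$ set $\Sigma_T$ does meet this circle. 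So $\psi_T$ alone \emph{has} translated points, sitting on the circle bundle of skeletons; these points are off $L_-$ but your $w$-movement argument does not apply to them, and the non-periodicity input you invoke from \cite[Proposition 3.11]{Courte18} cannot supply it --- the periodicity hypothesis is what makes these orbits closed, not what removes them.

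The repair is entirely within your own toolkit: drop the $w$-argument and apply the $z$-shift of $g$ \emph{uniformly on the whole shell}. Fix the strict-translation zone of $g$ near $L_+$ and the shift $c$ first, independently of $T$; for $T$ large every point $x$ in the factor-$1$ set of $\phi=g\circ\psi_T$ satisfies that $\psi_T(x)$ lies in that strict zone (so the $g$-factor is $1$ there and your shell analysis is self-consistent), while the Reeb orbit of $x$ is a closed circle with $|z|=|z(x)|$ small and $\phi(x)$ has $|z|\approx c$; hence $\phi(x)$ misses the entire orbit, skeleton-fibre points included. The paper avoids this bookkeeping by a different algebraic move, following \cite{Cant22}: instead of composing, it \emph{conjugates}, taking $\sigma\phi_T\sigma^{-1}$ where $\sigma$ is the (cut-off) $\C$-translation, supported away from $L_+$, that displaces $L_-$ from the periodic neighbourhood $U$. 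Conjugation preserves the focal structure, so translated-point candidates concentrate near $\sigma(L_-)\subset M\setminus U$ while their images concentrate near $L_+$, whose Reeb orbits are closed circles contained in $U$; no quantitative comparison between $c$, the shell size and the focal image is needed. Your composition route works once corrected as above, but the conjugation is cleaner for exactly the uniformity issues you identified.
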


\begin{proof}
We follow the idea of the proof in \cite{Cant22}.
Let $L_{\pm}$ be the skeletons of two different pages of the open book decomposition $(K,\theta)$.
First note that the flow $\phi_t$ of the vector field $X$ from Lemma \ref{lem_ob_contraction} has a similar focal property as in \cite[Definition 5]{Cant22} with respect to $(L_-,L_+)$.
Instead of contracting to a single point, we have for all $z\in M\setminus L_-$ that 
$$\lim\limits_{t\rightarrow\infty}\phi_t(z)\in L_+.$$
Note that at $L_{\pm}$ the scaling factor of $\phi_t$ with respect to $\beta$ and hence with respect to $\alpha$ is given by $e^{\pm t}$.
Analogously to \cite[Lemma 2]{Cant22} one can show that the scaling factor $1$-set $\Sigma_t:=\{z\in M|\phi_t^{\ast}\alpha=\alpha\}$ satisfies that
$$\lim_{t \to \infty} \mathrm{dist}(\Sigma_t, L_-)+\mathrm{dist}(\phi_t(\Sigma_t), L_+)=0.$$
As in \cite[Lemma 4]{Cant22} one can easily check that given a contactomorphism $\sigma$, the flow $\sigma\phi_t\sigma^{-1}$ has the focal property for the sets $\sigma(L_{\pm})$.
Let $U$ be a neighbourhood of the skeletons such that the Reeb vector field of $\alpha$ is given by $\partial_\theta$ on $U$.
Since $L_-$ is the skeleton of a $1$-stabilization, there exists a contactomorphism $\sigma$ supported away from $L_+$ that displaces $L_-$ from $U$.
Then, analogously to \cite{Cant22}, for $T$ large the contactomorphism $\sigma\phi_T\sigma^{-1}$ does not have translated points for $\alpha$.
\end{proof}

The existence of translated points for every contactomorphism and every contact form, has been proven in several cases including hypertight contact manifolds \cite{Albers15, Meiwes18}, certain lens spaces \cite{Granja21, Allais22} and certain prequantization spaces \cite{Tervil21, Bae24}.
However, it is likely that all of these examples are orderable, see e.g. \cite{Albers15, Borman152}.
Given the considerations above it seems that the non-existence of translated points is related to non-orderability of the underlying contact manifold.
Therefore we post the following question.

\begin{qu}
Is it true that a cooriented closed contact manifold $(M,\xi)$ is non-orderable if and only if there exists a contact form $\alpha$ with $\ker\alpha=\xi$ and a contactomorphism $\phi$ having no translated points for $\alpha$?
\end{qu}

As a first step it would be for instance interesting to consider the case $T^n\times S^{n+1}$ which is non-orderable by Corollary \ref{corst} but on which, to our knowledge, the general existence of translated points is not known. Similarly, it would be interesting to know if abstract open books with flexible pages admit contactomorphisms without translated points.
 
\subsection{Orderability and the Weinstein conjecture}

In view of \cite[Theorem 1.1]{Albers15}, on a non-orderable closed contact manifold, any contact form has a contractible closed Reeb orbit.
In particular the Weinstein conjecture holds for such manifolds.
Even though it seems to be very hard to obtain examples from Theorem \ref{thm1} and Theorem \ref{thm2} for which the Weinstein conjecture is not known to hold, we obtain the following criterion for the existence of contractible closed Reeb orbits in terms of open book decompositions.

\begin{cor}\label{Cor_Weinstein}
 Let $(M,\xi)$ be a closed contact manifold that satisfies the hypothesis of Theorem \ref{thm1} or Theorem \ref{thm2}.
 Then every contact form for $\xi$ has a contractible closed Reeb orbit.
\end{cor}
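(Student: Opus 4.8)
The plan is to deduce the corollary formally from the non-orderability statements already in hand, combined with an existence result for contractible Reeb orbits on non-orderable contact manifolds. The guiding observation is that non-orderability is a property of the contact structure $(M,\xi)$ itself and does not depend on any choice of contact form; once it is established, it therefore applies uniformly to every contact form defining $\xi$.

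First I would note that, under the hypotheses of either Theorem \ref{thm1} or Theorem \ref{thm2}, the manifold $(M,\xi)$ is non-orderable. For Theorem \ref{thm1} this is explicitly part of the stated conclusion (``In both cases $(M,\xi)$ is non-orderable''). For Theorem \ref{thm2}, the hypothesis supplies a contactomorphism $\psi$ displacing a page skeleton $L$ from its orbit $\bigcup_{t\geq 0}\phi_t(L)$ under a Reeb flow $\phi_t$, so the final clause of Theorem \ref{thm2} produces a positive contractible loop and hence non-orderability.

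Second, I would invoke \cite[Theorem 1.1]{Albers15}, which asserts that on a closed non-orderable contact manifold every contact form admits a contractible closed Reeb orbit. Applying this to an arbitrary contact form $\alpha$ with $\ker\alpha=\xi$ yields the desired orbit, completing the argument. I anticipate no genuine obstacle: the proof is a direct chaining of the two inputs, and the only point meriting attention is that \cite[Theorem 1.1]{Albers15} takes non-orderability of $(M,\xi)$ as its sole hypothesis and hence delivers a contractible Reeb orbit for \emph{every} contact form defining $\xi$, not merely for the specific form appearing in the open book construction used to establish non-orderability.
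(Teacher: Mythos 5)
Your proposal is correct and coincides with the paper's own argument: the paper deduces the corollary exactly by combining the non-orderability conclusions of Theorems \ref{thm1} and \ref{thm2} (the latter in the Reeb-flow case, as you note) with \cite[Theorem 1.1]{Albers15}, which gives a contractible closed Reeb orbit for every contact form on a closed non-orderable contact manifold. Your observation that non-orderability is a property of $(M,\xi)$ independent of the contact form, so the conclusion applies to \emph{every} form defining $\xi$, is precisely the point the paper relies on.
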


Corollary \ref{Cor_Weinstein} might be for instance applied to the case of prequantization spaces, giving a criterion in terms of open book decompositions for the fibre to be torsion.

\subsection{Prequantization spaces and subcritical polarizations}\label{sec:prequ}

Given a closed integral symplectic manifold $(Y,\omega)$ (meaning that $[\omega]$ lies in the image of the natural map $H^2(Y,\Z)\rightarrow H^2(Y,\R)$), there exists a unique complex line bundle over $Y$ with Euler class $[\omega]$.
The associated principal $S^1$-bundle $P\rightarrow Y$, called \textbf{prequatization bundle} or Boothby-Wang bundle, carries a contact form $\alpha$ whose Reeb flow is given by the $S^1$-action, see e.g. \cite[Section 7.2]{Geiges}.

Given an integral symplectic manifold $(Y,\omega)$ with Donaldson hypersurface $\Sigma$ that is Poincaré dual to $[\omega]$, there exists an open book decomposition of $(P,\ker\alpha)$ whose page is given by $Y\setminus \Sigma$, see \cite{Chiang14}.
In the context of Kähler manifolds, the choice of such a hypersurface is also called a polarization \cite[Section 2]{BiranCieliebak01}.
Together with Theorem \ref{thm1} this proves Corollary \ref{cor:Kahler}. In fact, Theorem \ref{thm:perreeb} implies the following generalization of this corollary. 

\begin{cor}\label{cor:preq noloose}
Let $P$ be an orderable prequantization of a symplectic manifold. Then it does not admit an open book whose pages have loose skeleta.  
\end{cor}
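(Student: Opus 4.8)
The strategy is to establish the contrapositive: assuming $P$ carries an open book whose (Weinstein) pages have loose skeleta, I will deduce that $P$ is non-orderable, contradicting the hypothesis. This reduces the corollary to a direct application of Theorem \ref{thm:perreeb}, and the only point requiring care is the verification of its periodicity hypothesis, which the prequantization structure supplies automatically.

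First I would fix such an open book $(K,\theta)$ on $P$, choose an ideal Giroux form, and let $L \subset P$ be the skeleton of one page, with its filtration $L^{n-1} \subset L^n = L$. Since the pages are finite-type Weinstein manifolds, $L$ is an isotropic complex, and by assumption it is loose, so that $L \setminus L^{n-1}$ is loose in $P \setminus L^{n-1}$. This is precisely the looseness input required by Theorem \ref{thm:perreeb}.

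The decisive observation is that, being a prequantization (Boothby--Wang bundle) of an integral symplectic manifold, $P$ comes equipped with a contact form $\alpha$ whose Reeb flow $\phi_t^{\alpha}$ is the defining $S^1$-action, hence \emph{globally} periodic; in particular $\phi_t^{\alpha}$ is periodic on every neighbourhood of the subcritical stratum $L^{n-1}$. Since orderability is an invariant of the cooriented contact structure $\xi = \ker\alpha$ and does not depend on the auxiliary choice of contact form (as already noted in the introduction for non-orderability), I am free to run Theorem \ref{thm:perreeb} with this globally periodic $\alpha$, even though it need not be the form originally adapted to $(K,\theta)$; all that the theorem asks of the open book is that it support $\xi$, which it does.

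With all hypotheses in place --- a Weinstein open book supporting $\xi$, a loose isotropic complex skeleton $L$, and a contact form $\alpha$ whose Reeb flow is periodic near $L^{n-1}$ --- Theorem \ref{thm:perreeb} yields that $|\tilde{\phi}_t^{\alpha}|_{\alpha}$ stays bounded in $t$, whence $P$ is non-orderable. This contradicts the orderability of $P$ and completes the argument. I expect the main (indeed only) conceptual obstacle to be the legitimacy of replacing the contact form underlying the given open book by the invariantly defined prequantization form $\alpha$ in order to secure the periodicity condition; once the contact-form-independence of orderability is invoked, the remainder is immediate.
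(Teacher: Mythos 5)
Your proposal is correct and coincides with the paper's own (implicit) argument: the paper states Corollary \ref{cor:preq noloose} as a direct consequence of Theorem \ref{thm:perreeb}, with the globally periodic Reeb flow of the Boothby--Wang contact form supplying the periodicity hypothesis near $L^{n-1}$, exactly as you observe. Your worry about swapping contact forms is in fact a non-issue, since Theorem \ref{thm:perreeb} only asks for \emph{some} contact form for $\xi$ with periodic Reeb flow near the subcritical stratum, independently of the ideal Giroux form defining the skeleton.
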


Biran and Cieliebak provide several examples of Kähler manifolds that admit a \textbf{subcritical polarization} of degree $1$, i.e., Kähler manifolds with a Donaldson hypersurface Poincaré dual to the symplectic form and a complement that is a subcritical Weinstein manifold.
Examples include for instance the standard $\C P^{n}$ (with $\omega_0=\frac{1}{\pi}\omega_{FS}$) or products of $\C P^n$ with its closed algebraic submanifolds of complex dimension $m<n.$

A direct consequence of Corollary \ref{cor:preq noloose} is that symplectic manifolds whose prequantization space is orderable do not admit subcritical polarizations of degree $1$.
An easy class of example is given by $(\C P^n,{k}\omega_{0})$ for any natural number $k\geq 2$, whose prequantization spaces are orderable Lens spaces \cite{Granja21, Allais22}.
In particular it follows that $(\C P^n,\omega_{0})$ does not admit any subcritical polarizations of degree $k$ for $k\geq 2$.

Similarly, Theorem \ref{thm:notr} implies that every symplectic manifold whose prequantization satisfies Sandon's conjecture, namely every $\psi \in \mathrm{Cont}_0(P,\xi)$ has translated points, does not admit a subcritical polarization. For instance, by \cite{Tervil21}, this is the case for $(M,\omega)$ being any monotone toric manifold with $[\omega]$ integral and primitive, other than $(\C P^n, \omega_{0}),$ does not admit a subcritical polarization. This reproves and greatly generalizes a folklore fact attributed to Biran that $\C P^n \times \C P^n$ does not admit subcritical polarizations. One gets a similar conclusion for degree $k\geq 2$ polarizations of a large class of monotone (not necessarily toric) symplectic manifolds by \cite{Albers23, Bae24}. For other obstructions on the polarization {\em hypersurfaces} of subcritical polarizations see \cite{BiranJerby}.

In a similar spirit, we immediately see from Theorem \ref{thm:notr} that contact manifolds satisfying Sandon's conjecture, including the examples above as well as hypertight contact manifolds \cite{Albers15, Meiwes18} and certain lens spaces \cite{Granja21, Allais22}, do not admit open books with subcritical pages.

\subsection{Non-displacement of pre-Lagrangians}

Recall that a submanifold $K$ in a contact manifold $(M,\xi)$ is called \textbf{pre-Lagrangian} if it has a Lagrangian lift to the symplectisation $SM$, i.e., if there exists a Lagrangian $L$ in $SM$ such that the projection of $SM$ onto $M$ maps $L$ diffeomorphically onto $K$. 
A first connection between non-displacement results for Legandrians and pre-Lagrangians and orderability was already found by Eliashberg and Polterovich \cite{Eliashberg00}.
In particular \cite[Theorem 2.3.A]{Eliashberg00} proves that a contact manifold is orderable if there exists a pre-Lagrangian or Legendrian $A$ whose stabilization can not be displaced from the stabilization of a given pre-Lagrangian $K$ in $M\times T^{\ast}S^1$ (the pair $(K,A)$ has the stable intersection property).
A further connections between the displacement of pre-Lagrangian tori and orderability are discussed for instance in \cite{Borman152, Marincovic16} in the setting of toric contact manifolds.

A direct consequence of Theorem \ref{thm1} is that orderable contact manifolds only have Weinstein open book decompositions with critical pages.
As pointed out in the proof of corollary \ref{corst}, in the case of an open book with critical Weinstein page it happens in many cases, that the image of a skeleton under a Reeb flow is a pre-Lagrangian submanifold.
For instance using the standard open book of $\R P^{2n+1}$ with page given by $T^{\ast}\R P^n$, we recover the non displacement of the Legendrian $\R P^n$ from its image under the Reeb flow described in \cite[Corollary 1.24]{Borman152}.

In this sense, Theorem \ref{thm2} can be seen as a weak converse direction to the implication in \cite[Theorem 2.3.A]{Eliashberg00}.

\begin{qu}
  Is a closed contact manifold orderable if and only if it contains a pair $(K,A)$ with the stable intersection property?
\end{qu}

\subsection{An explicit construction for positive loops}\label{sec:explicit}

It is a natural question to ask whether our methods can be used to provide an explicit construction for contractible positive loops. For example, let us focus on Theorem \ref{thm1}\ref{thm1:subcrit} and Theorem \ref{thm2}.

The flow from Lemma \ref{lem_ob_contraction} can be used in certain cases to shorten the contact Hofer length of a path supported in the complement of a skeleton of a page of some open book.
Together with Lemma \ref{lem_minimum} this observation can be used to obtain contractible positive loops from a Reeb flow with certain properties.

Let $(K_1,\theta_1)$ and $(K_2,\theta_2)$ be two (possibly different) open book decompositions supporting $(M,\xi)$.
Assume there exist a page $W_1$ of $(K_1,\theta_1)$ with skeleton $L_1$, a page $W_2$ of $(K_2,\theta_2)$ with skeleton $L_2$ and a Reeb flow $\phi_t^{\alpha}$ such that $\phi_t^{\alpha}(L_1)\cap L_2=\emptyset$.
Set $U_i:=M\setminus L_i$.
By Lemma \ref{lemfragmentation} we have
$$\phi_t^{\alpha}=f_t^1f_t^2,$$
where $f_t^i$ is compactly supported in $K_i$.
Lemma \ref{lem_ob_contraction} provides flows $\psi_t^i$ contracting $U_i$ onto the skeleton of some page of $(K_i,\theta_i)$.
As in the proof of Proposition \ref{prophandle}, the flows $\phi_t^i$ can be used to define flows $g_t^i$ compactly supported in $U_i$ that coincide on the support of $f_t^i$ with $\psi_t^i$.
For  $s\in [0,1]$ and fixed $T>0, C>0$ define the paths
$$ \phi_s^i:=g^i_{sT}(g^i_T)^{-1}f^i_{sC}g^i_T(g^i_{sT})^{-1}.$$
Then $\phi_s^i$ defines a path from the identity to $f_C^i$.
As pointed out in the proof of \cite[Theorem A]{Egor}, the triangle inequality and the naturality of the contact Hofer metric already hold on the level of the contact Hofer length $l_{\alpha}$.
In particular one has
$$l_{\alpha}( \phi_s^i)\leq 2l_{\alpha}( g^i_{sT})+l_{( g^i_{sT})^{\ast}\alpha}(f^i_{sC}).$$
As in the proof of Proposition \ref{prophandle}, there exists a constant $D_i$ such that for any $C$ one can find a sufficiently large $T$ with $l_{\alpha}( \phi_s^i)\leq D_i$.
Choose $C> D_1+D_2$.
Then the path $a_{s}:=\phi^{\alpha}_{sC}(\phi^2_{s})^{-1}(\phi^1_{s})^{-1}$ is a non-constant loop based at the identity with 
$$\int\limits_0^1\min\limits_M\alpha\left(\frac{d}{ds}a_s\right) ds>0.$$
As in the proof of Lemma \cite[Lemma 3.1]{Hedicke24} a positive contractible loop is given by $a_s\phi^{\alpha}_{\tau(s)}$, where $\tau\colon [0,1]\rightarrow \R$ is a suitable reparametrisation.

Note that the maps $g_t^i$ have similar properties as the map $b$ used in \cite[Remark 8.2]{Eliashberg06} to construct an explicit positive contractible loop on $S^3$ (see also \cite{Cant22}).
This raises the following questions:

\begin{qu}
 Do there exist Reeb flows and open book decompositions on $S^3$ (and more generally the contact boundary of a $2$-stabilization) such that the positive contractible loop constructed by the method above coincides with the loop from \cite{Eliashberg06}?  
\end{qu}

\begin{qu}
Are the positive contractible loops constructed from different Reeb flows and open book decompositions always homotopic via a homotopy consisting of positive contractible loops? See \cite[Section 6.3]{Eliashberg06} for related questions.
\end{qu}

\subsection{Asymptotically near-Reeb contractible loops}\label{sec:almostReeb}

An amusing fact due to J. Zhang and the second author is that the asymptotic contact Hofer norm of the Reeb flow is reflected in a certain measure of positive contractible loops. We state this relation here without proof, as we do not use it in the rest of the paper.

For a strictly positive contractible loop $\{\theta_t\}$ of contactomorphisms with contact Hamiltonian $h \in C^{\infty}([0,1] \times M,\R_{>0})$ set \[C(\theta) = \frac{\int_0^1 \max_M h_s\,ds}{\int_0^1 \min_M h_s\,ds}.\] Note that $C(\theta)\geq 1.$ Set \[ C(\alpha) = \inf_{\theta} C(\theta)\] with the infimum running over all positive loops $\theta.$ Let \[\mu(\alpha) = \lim_{t \to \infty} \frac{|\widetilde{\phi}^t_{R_{\alpha}}|_{\alpha}}{t}\] be the asymptotic contact Hofer growth of the Reeb flow in $\widetilde{\mathrm{Cont}}(M,\xi).$ 

\begin{prop}\label{prop:postcontr}
The following identity holds: \[ \mu(\alpha) = 1 - \frac{2}{C(\alpha)+1}.\]    
\end{prop}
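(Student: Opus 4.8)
## Proof proposal for Proposition \ref{prop:postcontr}

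The plan is to prove the identity $\mu(\alpha) = 1 - \frac{2}{C(\alpha)+1}$ by establishing the two inequalities separately and relating both quantities through the shortening principle embodied in Lemma \ref{lem_minimum}. The starting observation is that $\mu(\alpha)$ measures the best asymptotic factor by which the Reeb flow $\widetilde{\phi}^t_{R_\alpha}$ can be shortened in the contact Hofer pseudo-norm, while $C(\alpha)$ measures the best ratio of maximal to minimal contact Hamiltonian among strictly positive contractible loops. The key algebraic fact I would exploit is that if $\{\theta_s\}$ is a positive contractible loop with Hamiltonian $h_s$, then multiplying the Reeb flow against (a suitable reparametrization of) $\theta$ produces a new representative of $\widetilde{\phi}^t_{R_\alpha}$ whose length can be computed in terms of $\max_M h_s$ and $\min_M h_s$; this is exactly the mechanism used in the proof of Theorem \ref{thm_hofer_orderable} and in Section \ref{sec:explicit}.

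For the direction $\mu(\alpha) \leq 1 - \frac{2}{C(\alpha)+1}$, I would take a strictly positive contractible loop $\theta$ with $C(\theta)$ close to $C(\alpha)$, and use it to shorten the Reeb flow. Concretely, normalize so that $\int_0^1 \min_M h_s\,ds = m$ and $\int_0^1 \max_M h_s\,ds = M$ with $M/m = C(\theta)$. Since $\theta$ is a contractible loop, $[\theta] = \mathrm{id}$ in $\widetilde{\mathcal G}$, so concatenating $N$ copies of $\theta$ with the Reeb path and using Lemma \ref{lem_minimum} to flatten the minimum of the combined Hamiltonian to a nearly constant value, one can represent a long segment $\widetilde{\phi}^T_{R_\alpha}$ by a path whose Hofer length grows like the appropriate combination of $M$ and $m$. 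The optimization over how fast one runs the loop relative to the Reeb flow is what produces the Möbius-type expression $1 - \frac{2}{C(\alpha)+1}$: balancing the positive contribution $M$ against the (Reeb-subtracted) minimum $m$ yields an asymptotic slope of $\frac{M-m}{M+m} = \frac{C(\theta)-1}{C(\theta)+1} = 1 - \frac{2}{C(\theta)+1}$, and taking the infimum over $\theta$ gives the bound.

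For the reverse inequality $\mu(\alpha) \geq 1 - \frac{2}{C(\alpha)+1}$, I would run the argument backwards: given any efficient representative path $\{\psi_s\}_{s\in[0,t]}$ of $\widetilde{\phi}^t_{R_\alpha}$ of nearly optimal length, apply Lemma \ref{lem_minimum} to arrange that $\min_M \alpha(\frac{d}{ds}\psi_s)$ is essentially constant along the path, and then form the loop $\phi^\alpha_s \psi_s^{-1}$ (as in the proof of Theorem \ref{thm_hofer_orderable}). When $\mu(\alpha) < 1$ this loop is strictly positive and contractible, and its ratio $C$ can be read off directly from the maximum and minimum of the Reeb-corrected Hamiltonian of $\psi$. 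Inverting the algebra of the previous paragraph then shows $C(\alpha) \leq \frac{1+\mu}{1-\mu}$, which rearranges to the desired inequality. One should handle separately the degenerate case $\mu(\alpha)=1$, which corresponds to orderability (no positive contractible loop, $C(\alpha)=+\infty$) via Theorem \ref{thm_hofer_orderable}, so that the identity reads $1 = 1 - \frac{2}{\infty+1}$ in the limiting sense.

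The main obstacle I anticipate is making the optimization over reparametrizations fully rigorous at the asymptotic level, rather than just matching the two formulas heuristically. Specifically, the delicate point is controlling the interaction between the maximum of the contact Hofer length (which is what $|\cdot|_\alpha$ sees) and the pointwise behaviour of the minimum that Lemma \ref{lem_minimum} regularizes; one must ensure that flattening the minimum does not inflate the maximum beyond the claimed bound. This requires a careful choice of the cutoff/reparametrization data and a subadditivity argument showing that the per-period cost of inserting one copy of the optimal loop converges to exactly the ratio $\frac{C(\alpha)-1}{C(\alpha)+1}$ as the number of periods tends to infinity. Since the statement is offered without proof in the paper, I would present this as the essential content and relegate the reparametrization bookkeeping to a lemma modeled on \cite[Lemma 3.1]{Hedicke24}.
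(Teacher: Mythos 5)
Since the paper explicitly states Proposition \ref{prop:postcontr} without proof, there is no in-text argument to compare against; judged on its own terms, your two-inequality architecture is the natural and correct one. Your reverse direction (quantifying the proof of Theorem \ref{thm_hofer_orderable}: a near-minimal representative $\psi$ of $(\widetilde{\phi}^t_{R_\alpha})^{-1}$ of length at most $(\mu+\e)t$, flattened via Lemma \ref{lem_minimum}, yields a positive contractible loop $a_s=\phi^s_{R_\alpha}\psi_s$ with $C(a)\le \frac{(1+\mu+\e)t}{(1-\mu-\e)t}$, whence $C(\alpha)\le\frac{1+\mu}{1-\mu}$) is correct as sketched, with two caveats to make explicit. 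First, the representative of the inverse class should be the time-reversed path $s\mapsto\psi_{t-s}\psi_t^{-1}$, whose contact Hamiltonian is $-K_{t-s}$; the pointwise inverse $\psi_s^{-1}$ that you write has Hamiltonian $-e^{-g_s}\,K_s\circ\psi_s$, and the uncontrolled conformal factors $e^{-g_s}$ destroy the $\max/\min$ bookkeeping on which the ratio computation rests. Second, you correctly identify that everything hinges on the flattening not inflating $\int_0^1\max_M$: the lemma as stated controls only the minimum, so it must be invoked (or re-proved) in a strengthened form; since time reparametrization leaves both $\int\max_M$ and $\int\min_M$ invariant, the strengthening you propose is indeed available along the lines of \cite[Lemma 3.1]{Hedicke24}.

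The genuine gap is in the forward direction. As written, you flatten $\min_M h_s$ to the constant $\bar m$ and then optimize a \emph{single} relative speed between the Reeb flow and the loop; this yields only $|\widetilde{\phi}^c_{R_\alpha}|_\alpha\le\int_0^1\max\bigl(c-\bar m,\,\max_M h_s-c\bigr)\,ds$, and when $\max_M h_s$ is far from constant no constant $c$ achieves the balanced slope. Concretely, take $\bar m=1$ and $\max_M h_s=1$ off an interval of length $\e$ on which it equals $K$ with $K\e\approx 1$ (so $\bar M\approx 2$): every constant $c$ gives slope close to $1$, while the target $\frac{\bar M-\bar m}{\bar M+\bar m}$ is $1/3$; iterating the loop does not help, since the per-period cost is unchanged, and reparametrization cannot flatten minimum and maximum simultaneously. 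The fix is cheap and removes any need for Lemma \ref{lem_minimum} in this direction: run the Reeb flow backwards at the \emph{time-dependent} speed $c(s)=\tfrac12\bigl(\max_M h_s+\min_M h_s\bigr)$, i.e. consider $b_s=\phi^{-\sigma(s)}_{R_\alpha}\circ\theta_s$ with $\sigma(s)=\int_0^s c(u)\,du$. Because the Reeb flow is strict and composed on the left, the Hamiltonian of $b_s$ is $-c(s)+h_s\circ\phi^{\sigma(s)}_{R_\alpha}$, whose maximal absolute value over $M$ is exactly $\tfrac12\bigl(\max_M h_s-\min_M h_s\bigr)$; hence $b$ represents $\bigl(\widetilde{\phi}^{(\bar M+\bar m)/2}_{R_\alpha}\bigr)^{-1}$ with length $\tfrac12(\bar M-\bar m)$, and subadditivity of $t\mapsto|\widetilde{\phi}^t_{R_\alpha}|_\alpha$ gives $\mu(\alpha)\le\frac{\bar M-\bar m}{\bar M+\bar m}=1-\frac{2}{C(\theta)+1}$ for every positive contractible loop $\theta$, as required. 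With this correction, and your (correct) treatment of the degenerate case $\mu(\alpha)=1\Leftrightarrow C(\alpha)=+\infty$ via Theorem \ref{thm_hofer_orderable}, the proposal becomes a complete proof.
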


In particular, for the manifolds $(M,\alpha)$ from the proof of Theorems \ref{thm1},\ref{thm2} and from Corollaries \ref{cor1stab},\ref{corst},\ref{corekp},\ref{cor:Kahler} where we have proven that $\mu(\alpha) = 0,$ we obtain that $C(\alpha) = 1,$ or in other words there exist positive contractible loops with $C(\theta)$ is arbitrarily close to $1.$ We can consider them to be ``close" to the Reeb flow in a certain quantitative sense. 

Note that for orderable manifolds $\mu(\alpha) = 1$ for all $\alpha,$ whence $C(\alpha)=+\infty.$ It would be interesting to study possible values of $C(\alpha)$ for non-orderable manifolds other than those mentioned above and see if intermediate values $C(\alpha) \in (0,\infty)$ could be achieved.

\bibliographystyle{siam}

\end{document}